\documentclass[11pt]{article} 
\usepackage[utf8]{inputenc} 
\usepackage[margin=1in]{geometry} 
\geometry{letterpaper} 
\usepackage{graphicx} 


\usepackage{booktabs} 
\usepackage{array} 
\usepackage{paralist} 
\usepackage{verbatim} 
\usepackage{mathrsfs}
\usepackage{amssymb}
\usepackage{amsthm}
\usepackage{amsmath,amsfonts,amssymb}
\usepackage{esint}
\usepackage{graphics}
\usepackage{enumerate}
\usepackage{mathtools}
\usepackage{xfrac}
\usepackage{nicefrac}
\usepackage{subcaption}
\usepackage{stmaryrd}
\usepackage[normalem]{ulem}
\usepackage{cancel}
\usepackage{enumitem}
\usepackage{fancyvrb}

\allowdisplaybreaks[4]

\usepackage{mathabx}



\usepackage[usenames,dvipsnames]{xcolor}
\usepackage[colorlinks=true, pdfstartview=FitV, linkcolor=blue, citecolor=blue, urlcolor=blue]{hyperref}
\usepackage[normalem]{ulem}

\usepackage{tikz}
\usetikzlibrary{calc}
\usepackage{pgf}
\usetikzlibrary{external}
\tikzexternalize 


\numberwithin{equation}{section}
\numberwithin{figure}{section}

\newtheorem{theorem}{Theorem}[section]

\newtheorem{proposition}[theorem]{Proposition}
\newtheorem{lemma}[theorem]{Lemma}

\theoremstyle{definition}
\newtheorem{definition}[theorem]{Definition}

\newtheorem{remark}[theorem]{Remark}

\usepackage{dsfont}
\usepackage{bbm}

\newcommand{\N}{\mathbb{N}}
\newcommand{\R}{\mathbb{R}}

\newcommand{\cB}{\mathcal{B}}

\newcommand{\cA}{\mathcal{A}}

\newcommand{\cQ}{\mathcal{Q}}

\newcommand{\om}{\omega}

\newcommand{\eps}{\varepsilon}

\newcommand{\1}{\mathbf{1}}

\newcommand{\data}{\textnormal{data}}
\renewcommand{\rho}{\varrho}

\DeclareMathOperator{\supp}{supp}

\newcommand{\esssup}{\operatornamewithlimits{ess\,sup}}
\newcommand{\essinf}{\operatornamewithlimits{ess\,inf}}

\DeclareMathOperator*{\osc}{osc}

\DeclareMathOperator{\tail}{Tail}

\renewcommand{\d}{\mathrm{\,d}}
\def\dxy{\,{\mathrm d}x\,{\mathrm  d}y}
\def\dxt{\,{\mathrm  d}x\,{\mathrm d}t}
\def\dyt{\,{\mathrm  d}y\,{\mathrm  d}t}

\def\dytau{\,{\mathrm  d}y\,{\mathrm  d}\tau}
\def\dxyt{\,{\mathrm d}x\,{\mathrm  d}y\,{\mathrm  d}t}

\renewcommand{\leq}{\leqslant}
\renewcommand{\geq}{\geqslant}
\renewcommand{\subset}{\subseteq}
\renewcommand{\supset}{\supseteq}


\def\Xint#1{\mathchoice
{\XXint\displaystyle\textstyle{#1}}%
{\XXint\textstyle\scriptstyle{#1}}%
{\XXint\scriptstyle\scriptscriptstyle{#1}}%
{\XXint\scriptscriptstyle\scriptscriptstyle{#1}}%
\!\int}
\def\XXint#1#2#3{{\setbox0=\hbox{$#1{#2#3}{\int}$}
\vcenter{\hbox{$#2#3$}}\kern-.5\wd0}}
\def\dashint{\Xint-}

\def\XXiint#1#2#3{\setbox0=\hbox{$#1{#2#3}{\iint}$}
    \vcenter{\hbox{$#2#3$}}\kern-0.5\wd0}



\makeatletter
\newcommand\avsuminner[2]{%
  {\sbox0{$\m@th#1\sum$}%
   \vphantom{\usebox0}%
   \ooalign{%
     \hidewidth
     \smash{\,\rule[.23em]{8.8pt}{1.1pt} \relax}%
     \hidewidth\cr
     $\m@th#1\sum$\cr
   }%
  }%
}
\makeatother

\makeatletter
\newcommand\avsuminnerr[2]{%
  {\sbox0{$\m@th#1\sum$}%
   \vphantom{\usebox0}%
   \ooalign{%
     \hidewidth
     \smash{\,\rule[.23em]{6pt}{0.7pt} \relax}%
     \hidewidth\cr
     $\m@th#1\sum$\cr
   }%
  }%
}
\makeatother


\let\originalleft\left
\let\originalright\right
\renewcommand{\left}{\mathopen{}\mathclose\bgroup\originalleft}
\renewcommand{\right}{\aftergroup\egroup\originalright}


\usepackage{titlesec}

\newcommand{\addperiod}[1]{#1.}
\titleformat{\section}
   {\centering\normalfont\bfseries\large}{\thesection.}{0.5em}{}
\titleformat{\subsection}[runin]
  {\normalfont\bfseries\normalsize}
  {\thesubsection.}
  {0.5em}
  {\addperiod}
\titleformat{\subsubsection}[runin]
  {\normalfont\bfseries}
  {\thesubsubsection.}
  {0.5em}
  {\addperiod}
\titleformat*{\subsubsection}{\normalfont\itshape}
\titleformat*{\paragraph}{\bfseries}
\titleformat*{\subparagraph}{\large\bfseries}

\title{Nonlocal parabolic De Giorgi classes} 

\author{ Simone Ciani
\thanks{Department of Mathematics of the University of Bologna, Piazza Porta San Donato, 5, 40126 Bologna, Italy.
{\footnotesize \href{mailto:simone.ciani3@unibo.it}{simone.ciani3@unibo.it}. 
}
}
  \and
  Kenta Nakamura
\thanks{Institute of Natural Sciences, Nihon University, Tokyo, Japan.
{\footnotesize \href{mailto:kentanak55@gmail.com}{kentanak55@gmail.com} (corresponding author). 
}
}
}

\usepackage[nottoc,notlot,notlof]{tocbibind}
\usepackage{fancyhdr}
\usepackage{extramarks}
\pagestyle{fancy}
\fancyhf{} 


\fancyhead[R]{\thepage}
\fancyhead[L]{\rightmark}
\setlength{\headheight}{14pt}

\date{\today}

\begin{document}

\maketitle

\maketitle
\begingroup
\renewcommand{\thefootnote}{\ifcase\value{footnote}\or*\or**\fi}
\footnotetext[0]{\textbf{MSC 2020:} 35B65; 35R09; 47G20. \textbf{Keywords:} nonlocal parabolic  De Giorgi class; Harnack's inequality;  H\"{o}lder continuity; Tail estimates}

\endgroup

\begin{abstract}
We propose a new paradigm for the point-wise regularity theory of parabolic nonlocal problems, by addressing directly the elements of a wide parabolic energy class. First we carry on a refined analysis of their local boundedness under optimal tail conditions, and then prove several weak Harnack estimates through a purely measure-theoretical framework. Then we give a novel proof of the nonlocal parabolic Harnack inequality, that avoids any covering argument or lemma {\it à la} John-Nirenberg and is valid regardless of any comparison principle. The regularity program is completed by addressing local H\"{o}lder estimates, eventually leading to a Liouville-type theorem.
\end{abstract}

\setcounter{tocdepth}{2}
\renewcommand{\baselinestretch}{0.90}\normalsize
{\small \tableofcontents}
\renewcommand{\baselinestretch}{1.0}\normalsize

\section{Introduction}

\subsection*{Motivation}
One of the central challenges in the modern regularity theory of integro-differential equations is to understand the interplay between nonlocal long-range interactions and the degeneracy caused by nonlinearity. This paper proposes a unified strategy to overcome this difficulty by developing a robust regularity theory for the \emph{nonlocal parabolic De Giorgi class}. A De Giorgi class, roughly speaking, is a family of functions satisfying certain energy inequalities (often referred to as Caccioppoli inequalities) and moreover, belonging to the appropriate function spaces that allow these inequalities to make sense \footnote{It is well-known that, the De Giorgi class is the natural environment for weak solutions to quasi-linear elliptic equations in divergence form, and quasi-minima of functionals of the Calculus of Variations, see the original work \cite{GiaGiu} or the book \cite{Giusti}.}. Our motivation is for developing the framework of the nonlocal parabolic De Giorgi classes, rather than focusing purely on specific equations from the outset, is to provide a robust regularity theory with broad applicability. By adopting this unified framework, we circumvent any comparison principle and bypass any technical aspect tied to a particular equation (as in the Moser--Trudinger method). This allows our results to be applied to a wide variety of nonlocal problems, such as jump processes, nonlocal minimal surfaces, nonlocal diffusion physics appearing in kinetic models. The purpose of this paper is also to present a modern De Giorgi--Nash--Moser theory for the De Giorgi class in its parabolic nonlocal setting, which has strong analogies to the familiar theory of various doubly nonlinear parabolic equations such as the nonlocal Trudinger equation. For the elements of the De Giorgi class (see Definition~\ref{def of DG}), the (informal) main results are:
\begin{itemize}
\item \emph{Quantitative and refined local boundedness under the optimal condition}. We develop a quantitative local boundedness (Theorem~\ref{Thm:boundedness}) under optimal tail conditions. This significantly relaxes the $L^\infty$--tail to $L^{p-1}$--tail requirements found in previous works, providing the most general framework up to date.

\item \emph{Unified Harnack estimates}. By developing new measure theoretical propagation lemmas, we prove several nonlocal weak Harnack estimates under the minimal tail condition, namely the $L^{p-1+\eps}$-tail for $\eps \in (0,\infty)$ (Theorems~\ref{Thm:weakHarnack1} and~\ref{Thm:weakHarnack2}). Moreover, we establish the nonlocal strong Harnack inequality under the minimal tail condition (Theorem~\ref{Thm:fullHarnack}). 

\item \emph{H\"{o}lder regularity}. We develop the De Giorgi measure theoretical approach under minimal tail conditions, thereby showing that members of the parabolic De Giorgi class are H\"{o}lder continuous (Theorem~\ref{Thm:Holder modulus}). 
\item \emph{A Liouville-type rigidity}. As a direct consequence of optimal Hölder continuity prove a Liouville-type rigidity theorem, which implies that bounded elements of entire De Giorgi class are constant (Theorem~\ref{Thm:Liouville}).
\end{itemize}
\smallskip


The nonlocal parabolic De Giorgi class that we address in this paper encompasses the parabolic counterpart of the seminal work of Cozzi~\cite{Coz17} and embodies local weak solutions to the fractional heat equation. Hereafter, we give a complete picture of the regularity properties enjoyed by its members, proving various versions of improved weak Harnack estimates, Harnack estimates and local H\"older continuity. The major novelty consists in avoiding parabolic covering arguments by mixing the measure theoretical approach leading to a Weak Harnack inequality, with the strategy of Moser of chaining it with refined $L^{\infty}$ bounds. This simple observation is reported here at the end of Section \ref{Sect.2}, and the versatility of the proof is therefore attractive and pivotal for further developments in the fractional nonlinear setting. The results are new even for the linear case, thereby showing along the strategy of \cite{Lia25} that the recent achievements of Kassmann and Weidner~\cite{KW23a} are structural properties, independent of any underlying equation (see also \cite{Strom} for a first global approach). Precise results of will be stated at Section~\ref{Sect.2.2},  while here we present their simplest qualitative consequence.

\begin{theorem}[Nonlocal strong Harnack inequality]\label{Thm:fullHarnack}
Let $u \in \mathbf{PDG}^{s,p}(\Omega_T,\gamma_{\mathrm{DG}}, \eps)$ in the sense of Definition~\ref{def of DG} be such that $u \geq 0$ in $\cQ_{R_0}$ with $R_0:=4\cdot 6^{\nicefrac{1}{sp}}\rho$. Then, there exists a constant $C_\mathsf{H}$ depending only on the $\data$ such that
\begin{align*}
&\sup_{B_{\rho}(x_0) \times \left(t_0-\frac{1}{2}(2\rho)^{sp}, t_0\right]}u +\left(\dashint_{t_0-\frac{1}{2}(2\rho)^{sp}}^{t_0} \Big[\tail (u_+(t) ; B_\rho(x_0) )\Big]^{p-1}\d{t}\right)^{\frac{1}{p-1}} \notag\\[2pt]
& \quad \leq C_\mathsf{H}\left[ \inf_{B_\rho (x_0)\times \left(t_0+\frac{3}{4}(4\rho)^{sp}, t_0+(4\rho)^{sp} \right]}u +\left(\dashint_{I_{R_0}(t_0)} \Big[\tail \left(u_-(t)\,;B_{4\rho}(x_0)\right)\Big]^{p-1+\eps}\d{t}\right)^{\frac{1}{p-1+\eps}}\right],
\end{align*}
provided that the following inclusion is satisfied
\[
B_{2\rho}(x_0) \times \left(t_0-(2\rho)^{sp}, t_0+6(4\rho)^{sp}\right] \subset \cQ_{R_0} \Subset \Omega_T.
\]
\end{theorem}
\begin{remark}
The definition of the second term appearing on the right side, called the ``tail'' term, is presented in Section~\ref{Fractional spaces}. If we impose the global non-negativity of $u$,  the influence of the distant values of $u_-$ fades and the Harnack estimate gets simplified into the classic statement
\begin{align*}
\sup_{B_{\rho}(x_0) \times \left(t_0-\frac{1}{2}(2\rho)^{sp},t_0\right]}u \leq C_\mathsf{H} \inf_{B_\rho (x_0)\times \left(t_0+\frac{3}{4}(4\rho)^{sp}, t_0+(4\rho)^{sp} \right]}u\,. 
\end{align*} 
Since we deal with local estimates, the time-gap phenomenon is unavoidable, see ~\cite[Counterexample 6.7, Page 38]{LW25}.
\end{remark}

\subsection*{Brief review of the literature}
The regularity theory of \emph{De Giorgi classes} emerged in the 1960s with the celebrated work by De Giorgi~\cite{DG57} through the study of minimizers of regular functionals
\[
u \quad\mapsto \quad \mathscr{F}[u \,;\Omega]:=\int_\Omega f(x,u, \nabla u)\d{x}\,.
\]\noindent As observed in \cite{LU}, the local theory developed by De Giorgi in \cite{DG57} is valid for functions satisfying merely the Caccioppoli energy inequalities. In the years that came great has been the impact of the method invented by De Giorgi, since a whole new set of functions called {\it quasi-minima}, embodying solutions to elliptic equations as well as minima of functionals and obstacle problems, had been shown to belong to the De Giorgi class. We refer to the book \cite{Giusti} for a comprehensive historical overview. In particular, quite some years later DiBenedetto and Trudinger (see~\cite{DT}) proved that elements of the elliptic De Giorgi class indeed satisfy a point-wise Harnack inequality. Their approach relied in deriving positivity estimates in measure with a power-like dependance, by employing a proper Krylov-Safonov covering argument. Even higher integrability of the gradient ~\cite{DG} and Phragmen-Lindel\"of type results are still valid in the framework of De Giorgi classes, disregarding of any sort of comparison principle, \cite{CiaGiaLi}. Some of the main results are valid also in metric measure spaces, as Kinnunen and Shanmugalingam~\cite{KS} have shown. 
\smallskip

On the parabolic side, the pioneering work of Ladyzhenskaya, Solonnikov and Ural'tseva~\cite{Lady} gave the start to the study of parabolic De Giorgi classes while, only later, Wieser~\cite{Wieser} and Wang~\cite{Wieser} proved that a parabolic notion of quasi-minima fits in the class, and that a Harnack inequality holds true also in this case, by means of a complex parabolic covering argument. After that, these classes have been intervened and developed by many authors. For instance, we have been mainly motivated by the work of Gianazza and Vespri (see~\cite{GV}), that extended the original definition of parabolic De Giorgi classes given in~\cite{Lady} to a more general growth of order $p>1$, and proved the Harnack estimates directly by sticking with the measure theoretical technique. Their substantial technique is condensed in a theoretical expansion of positivity machine, combined with the so-called clustering lemma (see~\cite{DGV, DV}). As we will see hereafter, a main difference is that there is no need of the local clustering Lemma (or fractional dicrete isoperimetric inequality, compare with \cite{Coz17}) to get a power-like expansion of positivity in the fractional framework: being a nonlocal object defined in all the space, the fractional Caccioppoli inequalities already encode all the information needed.  Finally, the aforementioned regularity results have been extended to the metric measure space by~\cite{KMMP} for the only parabolic linear case. This short preamble is just to highlight our guiding principle that {\it point-wise regularity properties such as oscillation and Harnack estimates are consequences of the membership to an energy class}; it is clearly an incomplete list, and it refers mostly to our taste and inspiration in the subsequent framework. The literature also in this case is very wide, we refer to \cite{DG23} for a first introduction.
\smallskip

Differently to the local case, the study of nonlocal De Giorgi classes has not seen so many contributions. As for nonlocal elliptic De Giorgi classes, in ~\cite{Min11} a first nonlocal De Giorgi type iteration had been implemented with the aim of estimating pointwise local gradient bounds, while Cozzi (see~\cite{Coz17}) gave a whole satisfying picture for the nonlinear nonlocal case, through a fine iterative control of the tail term, see also the foundational works by Di~Castro, Kuusi, and Palatucci~\cite{DKP14, DKP16} for the case of minimizers of fractional integrals. In his work, Cozzi proved various properties such as Harnack inequality and H\"older continuity, by means of a weak Harnack estimate obtained by a sophisticated cube decomposition {\emph{\'a la Krylov--Safonov}}. See \cite{CCMV25} for a brief proof that bypasses the covering argument. Up to our knowledge, the study of nonlocal parabolic De Giorgi classes was firstly addressed in~\cite{Nak23}. Clearly, the aim of the present work is to complete that picture.

\section{Preliminaries and the main results}\label{Sect.2}

\subsection{Notation}
We introduce quite a bit of notation, most of it standard in the literature. We will be coherent with the notation of \cite{Nak23}.

\subsubsection{General notation}

We denote $s \wedge k := \min\{s,k\}$ and $s \vee k:=\max\{s, k\}$. Moreover, we write $(s-k)_\pm:=\pm(s-k) \vee 0$. For nonnegative real numbers $a,b \in \R$, the symbol $a \lesssim b$ means that there exists a constant $C>0$ such that $a \leq C b$. When the constant $C$ depends on parameters, say $d,s,p$, we will denote $a \lesssim_{d,s,p} b$. The symbol $\1_{A}$ denotes the indicator function on a set $A$. To shorten the notation, we denote $\sup_A \equiv \esssup_A$  and  $\inf_A \equiv \essinf_A$, respectively. 
 
By $|U|$, we denote the Lebesgue measure of a measurable set $U \subset \R^d$ with $d \geq 1$. If  $0<|U|<\infty$, then for any integrable function $g:U \to \R$, its integral average is denoted by
\[
(g)_U:=\dashint_{U}g(x)\d{x}:=\frac{1}{|U|}\int_{U}g(x)\d{x}.
\]

Finally, we denote by $C$, $c$ or sometimes $\gamma$ different positive constants in a given context, which may vary from line to line. 
To lighten that notation, we denote
\[
\data := \left(d,s,p,\gamma_{\mathrm{DG}}, \varepsilon\right)
\]
where $\gamma_{\mathrm{DG}}>0$ and $\varepsilon \in (0,\infty]$ are the constants of the \emph{De Giorgi constant} appearing in Definition~\ref{def of DG}.  When $\eps=\infty$, we interpret in a natural way that the $\data$ depends only on $(d,s,p,\gamma_{\mathrm{DG}})$ otherwise stated.

\subsubsection{Space-time cylinders}
The open ball of radius $\rho>0$ centered at $x_0 \in \R^d$ is denoted by $B_\rho(x_0):=\left\{x \in \R^d: |x-x_0|<\rho\right\}$. For a fixed vertex $z_0=(x_0,t_0) \in \R^d \times \R$, we define the (one-sided) parabolic cylinder $Q_{\rho, \theta}(z_0)$ by
\[
Q_{\rho,\theta}(z_0):=B_\rho(x_0) \times (t_0-\theta, t_0]; 
\]
in particular, when $\theta=\rho^{sp}$ we will shorten $Q_{\rho}(z_0)\equiv Q_{\rho,\rho^{sp}}(z_0)$. If no confusion arises or it is clear from the context which center is meant, we will omit it from the notation, that is, $B_\rho \equiv B_\rho(x_0)$ and $Q_{\rho,\theta}\equiv Q_{\rho,\theta}(z_0)$, etc. Throughout the paper, we fix the two-sided ambient cylinder
\[
\cQ_R(z_0):=B_{R}(x_0) \times (t_0-R^{sp},t_0+R^{sp}]=:B_R(x_0) \times I_R(t_0).
\]
Throughout the paper, for $T>0$ and a bounded domain $\Omega \subseteq \R^d$  we denote $\Omega_T:=\Omega \times (0,T)$. 

\subsubsection{Fractional spaces}\label{Fractional spaces}
In this subsection, we present various functional spaces used in the article. Let $s \in (0,1)$, $p \in [1,\infty)$ and let us denote the H\"{o}lder conjugate of $p$ by $p^\prime:=p/(p-1) \in (1,\infty]$. The following nonlocal \emph{tail} captures the long-range interactions caused by the nonlocal problem:
\[
\tail\left(u\,; B_R(x_0)\right):=\left(R^{sp}\int_{\R^d \setminus B_R(x_0)}\frac{|u(x)|^{p-1}}{|x-x_0|^{d+sp}}\d{x} \right)^{\nicefrac{1}{(p-1)}}, \quad x_0 \in \R^d, \quad R>0.
\]
When $x_0=0$ or it is clear from the context, we will often write $\tail\left(u\,;B_R(x_0)\right) =\tail(u\,;B_R)$. We define the corresponding weighted Lebesgue space, called \emph{Tail space}, by
\[
L^{p-1}_{sp}(\R^d):=\left\{u \in L^{p-1}_\mathrm{loc}(\R^d) : \int_{\R^d}\frac{|u(x)|^{p-1}}{(1+|x|)^{d+sp}}\d{x}<\infty \right\}\,.
\]
It is straightforward to check that
\[
u \in L^{p-1}_{sp}(\R^d) \quad \iff \quad \tail\left(u\,; B_R(x_0)\right)<\infty, \quad \forall x_0 \in \R^d, \quad \forall R>0.
\]
Such a nonlocal tail and tail space were originally introduced in~\cite{DKP14,DKP16}. 

We next introduce the \emph{fractional Sobolev space} $W^{s,p}(\Omega)$. A measurable function $u:\Omega \to \R$ is in the fractional Sobolev space $W^{s,p}(\Omega)$ if and only if, as for the norm,
\[
\|u\|_{W^{s,p}(\Omega)}:=\|u\|_{L^p(\Omega)}+[u]_{W^{s,p}(\Omega)} <+\infty,
\]
where 
\[
[u]_{W^{s,p}(\Omega)}:=\left(\int_\Omega \int_\Omega \frac{|u(x)-u(y)|^p}{|x-y|^{d+sp}}\dxy \right)^{\nicefrac{1}{p}}
\]
is the \emph{Gagliardo} semi-norm. The fundamental tools and useful results in the fractional Sobolev spaces are presented in the comprehensive monographs~\cite{DNPV12, KP18, FeRo24, ADV25}.

Concerning the parabolic setting, we refer to the Bochner integral:  given $p \in [1,\infty)$, $I \subset \R$ and an arbitrary Banach space $X$, we denote by $L^p(I ; X)$ the space of Lebesgue-measurable mappings $u : I \to X$ such that
\[
\|u\|_{L^p(I ; X)}:=\left(\int_I \|u(\cdot, t)\|_X^p\d{t} \right)^{\nicefrac{1}{p}}<\infty.
\]
Finally, we define $C(I; X)$ as the space of continuous maps $t \mapsto \|u(\cdot,t)\|_X$.

\subsection{Statement of the main results}\label{Sect.2.2}
We begin by presenting the definition of De Giorgi class adopted in this paper, which partially follows those in celebrated references~\cite[Definition 2.1]{GV}, ~\cite{Lia21} in terms of the parabolic type, and generalizes the one in~\cite[Definitions 1 and 2]{Nak23}.
\begin{definition}[Nonlocal parabolic De Giorgi class]\label{def of DG}
For a vertex $z_0=(x_0,t_0) \in \Omega_T$, fix a cylinder $Q_{\rho,\tau}(z_0) \Subset \Omega_T$.  Let $\zeta$ be a nonnegative, piecewise smooth cutoff function such that $\zeta(\cdot,t)$ is compactly supported in $B_\rho(x_0)$ for all $t \in (t_0-\tau,t_0)$.  Let $p \in (1,\infty), s \in (0,1)$ and $\eps \in (0, \infty]$. A measurable function $u$ belongs to the \emph{nonlocal parabolic De Giorgi class} $\mathbf{PDG}_{\pm}^{s,p}(\Omega_T,\gamma_{\mathrm{DG}}, \eps)$ if 
\begin{itemize}
\item $u$ belongs to the functional space
\begin{equation}\label{d1}
u \in C\left([0,T]\,; L^p(\Omega)\right) \,\cap\, L^p\left(0,T\,; W^{s,p}(\Omega)\right);
\end{equation}
\item the long-range condition for every $x_0 \in \R^d$ and $\rho>0$
\begin{equation}\label{d1'}
\tail\left(u\,; B_\rho(x_0)\right) \in L^{p-1+\eps}_\mathrm{loc}([0,T])\quad \iff \quad u \in L^{p-1+\eps}_\mathrm{loc}\left(0,T\,; L_{sp}^{p-1}(\R^d)\right),
\end{equation}
is satisfied;
\item there exists a constant $\gamma_{\mathrm{DG}}$ such that the following inequality is valid:
\begin{align}\label{d2}
&\sup\limits_{t_0-\tau<t<t_0}\int_{B_\rho(x_0)}\zeta^pw_\pm^p(x,t)\d{x} \notag\\[4pt]
&\quad \quad \quad +\int_{t_0-\tau}^{t_0}\int_{B_\rho(x_0)}\int_{B_\rho(x_0)}\min\left\{\zeta^p(x,t), \zeta^p(y,t)\right\}\dfrac{\big|w_{\pm}(x,t)-w_{\pm}(y,t)\big|^p}{|x-y|^{d+sp}}\dxyt \notag \\[4pt]
& \quad \quad \quad +\int_{Q_{\rho,\tau}(z_0)}\zeta^pw_\pm(x,t)\d{x}\left(\int_{\R^d}\frac{w_\mp^{p-1}(y,t)}{|x-y|^{d+sp}}\d{y}\right)\d{t} \notag\\[4pt]
& \quad \quad \leq \int_{B_\rho(x_0)}\zeta^pw_\pm^p(x,t_0-\tau)\d{x}+\gamma_{\mathrm{DG}}\int_{Q_{\rho,\tau}(z_0)}\left|\partial_t \zeta^p\right|w_\pm^p\dxt \notag\\[4pt]
&\quad \quad \quad +\gamma_{\mathrm{DG}}\int_{t_0-\tau}^{t_0}\int_{B_\rho(x_0)}\int_{B_\rho(x_0)}\min\left\{w_\pm^p(x,t), w_\pm^p(y,t)\right\}\dfrac{\big|\zeta(x,t)-\zeta(y,t)\big|^p}{|x-y|^{d+sp}}\dxyt \notag\\[4pt]
&\quad \quad \quad +\gamma_{\mathrm{DG}}\int_{Q_{\rho,\tau}(z_0)}\zeta^pw_\pm(x,t)\d{x} \left(\sup_{x\,\in\, \supp \zeta(\cdot,t)}\int_{\R^d \setminus B_\rho(x_0)}\frac{w_\pm^{p-1}(y,t)}{|x-y|^{d+sp}}\d{y}\right)\d{t} \notag\\[4pt]
&\quad \quad  \quad \mp\gamma_{\mathrm{DG}}\int_{Q_{\rho,\tau}(z_0)}k^\prime(t)\zeta^p\Big(|u|+|k(t)|\Big)^{p-2}w_\pm(x,t)\dxt,
\end{align}
where we have denoted $
w_\pm(x,t):=\left(u(x,t)-k(t)\right)_\pm$ for brevity, where $k(t)$ is an arbitrary absolutely continuous function on $(0,T)$. 
\end{itemize}
Moreover, we define
\[
\mathbf{PDG}^{s,p}(\Omega_T,\gamma_{\mathrm{DG}},\eps):=\mathbf{PDG}_+^{s,p}(\Omega_T,\gamma_{\mathrm{DG}}, \eps) \cap \mathbf{PDG}_-^{s,p}(\Omega_T,\gamma_{\mathrm{DG}}, \eps),
\]
\end{definition}
\begin{remark}
It immediately follows from the definition that
\[
u \in \mathbf{PDG}^{s,p}_\pm(\Omega_T,\gamma_{\mathrm{DG}},\eps) \quad \iff \quad -u \in \mathbf{PDG}^{s,p}_\mp(\Omega_T,\gamma_{\mathrm{DG}},\eps).
\]
\end{remark}
%
The first main result of this paper is a more quantitative version of the local boundedness proved
in~\cite{Nak23}. Our approach is different from the one of~\cite[Theorem 1.1, Corollary 3.1]{Nak23} and is based on the time-dependent truncation methods, whose linear case was introduced in\cite{KW23a}.

\begin{theorem}[Quantitative local boundedness]\label{Thm:boundedness}
Let $u\in\mathbf{PDG}_\pm^{s,p}(\Omega_T,\gamma_{\mathrm{DG}}, \eps)$ in the sense of  Definition~\ref{def of DG}. Suppose that $Q_{\rho, \theta}(z_0) \subseteq \Omega_T$ with $z_0=(x_0,t_0) \in \Omega_T$, and let $\sigma \in  (0,1)$ and $\nu \in (0,p]$. There exist positive constants $C, q <\infty$, both depending only on $(d,s,p,\gamma_{\mathrm{DG}})$, and $C_\nu \equiv C_\nu (d,s,p,\gamma_{\mathrm{DG}},\nu)<\infty$ such that
\begin{align*}
\sup_{Q_{\sigma \rho, \sigma \theta}(z_0)}u_\pm &\leq \frac{C}{(1-\sigma)^{(d+p)q+\frac{d+sp}{p-1}}}\left( \frac{\theta}{(\sigma \rho)^{sp}}\dashint_{t_0-\theta}^{t_0} \Big[ \tail \left(u_\pm(t)\,; B_{\sigma \rho}(x_0)\right) \Big]^{p-1}\d{t}\right)^{\nicefrac{1}{(p-1)}}\\
&\quad \quad \quad \quad \quad \quad +C_\nu \left(\frac{\boldsymbol{\cA}}{(1-\sigma)^{(d+p)qp}}\dashint_{Q_{\rho,\theta}(z_0)}u_\pm^\nu\dxt \right)^{\nicefrac{1}{\nu}},
\end{align*}
holds, where 
\[
\boldsymbol{\cA}:=\left(\dfrac{\rho^{sp}}{\theta}+\dfrac{1}{\sigma^{d+sp}}\right)^{pq}\left(\dfrac{\theta}{\rho^{sp}}\right) \quad \mbox{and} \quad
q:=\begin{cases}
\nicefrac{(d+sp)}{p^2s}, \quad &\textrm{if}\quad sp<d,\\[1mm]
\nicefrac{3}{p}, \quad &\textrm{if}\quad sp\geq d.
\end{cases}
\]
\end{theorem}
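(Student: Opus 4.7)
The approach is a De~Giorgi iteration on a nested sequence of parabolic cylinders $Q_n \searrow Q_{\sigma\rho,\sigma\theta}(z_o)$ paired with increasing truncation levels $\kappa_n \nearrow \kappa+k$, for some $k>0$ to be determined at the end. The gain of integrability needed to drive the iteration comes from combining the time-supremum of the $L^2$-norm of $(u-\kappa_n)_\pm$ with the spatial Gagliardo $W^{s,p}$-seminorm via a parabolic fractional Sobolev embedding, producing a super-linear recursion for
\[
Y_n := \biint_{Q_n} (u-\kappa_n)_\pm^p \dxt.
\]
The case of general $\nu \in (0,p]$ is then reduced to the case $\nu=p$ by a classical interpolation between the $L^\infty$-norm and the $L^\nu$-norm on a further nested family of cylinders.

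\textbf{Iteration step.} Set $\rho_n = \sigma\rho + (1-\sigma)\rho/2^n$, $\theta_n = \sigma\theta + (1-\sigma)\theta/2^n$, $\kappa_n = \kappa + k(1-2^{-n})$, and $Q_n = Q_{\rho_n,\theta_n}(z_o)$. The defining energy inequality of $\mathbf{PDG}_\pm^{s,p}$, applied to the pair $Q_{n+1} \subset Q_n$ with a standard cutoff $\eta_n$ equal to $1$ on $Q_{n+1}$ and supported in $Q_n$, yields schematically
\begin{align*}
\esssup_{t} \int_{B_{\rho_{n+1}}} (u-\kappa_n)_\pm^2(t)\,\d x &+ \iint_{Q_{n+1}} \bigl[(u-\kappa_n)_\pm\bigr]_{W^{s,p}}^p \dxt \\
&\leq \frac{\gamma_{\mathrm{DG}}\, 2^{\alpha n}}{((1-\sigma)\rho)^{sp}} \iint_{Q_n}(u-\kappa_n)_\pm^p \dxt + \mathcal{T}_n,
\end{align*}
for some universal $\alpha>0$, where the tail contribution is of the form
\[
\mathcal{T}_n \sim \bigl|\{(u-\kappa_n)_\pm>0\} \cap Q_n\bigr| \cdot \int_{t_o-\theta_n}^{t_o} \bigl[\tail((u-\kappa_n)_\pm(t); B_{\rho_n}(x_o))\bigr]^{p-1}\,\d{t}.
\]
Since $\kappa_n \geq \kappa$ and $B_{\rho_n} \supseteq B_{\sigma\rho}(x_o)$, a pointwise comparison in the integrand defining the tail dominates each $\mathcal{T}_n$ by the single tail on the smallest ball at the lowest level, which is precisely the one appearing in the conclusion. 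A parabolic fractional Sobolev inequality then upgrades the left-hand side to an $L^{p(1+\beta)}$-integral over $Q_{n+1}$ with $\beta=\beta(N,s,p)>0$, while the Chebyshev inequality $|\{(u-\kappa_{n+1})_\pm>0\}| \leq (2^{n+1}/k)^p \iint_{Q_n}(u-\kappa_n)_\pm^p\dxt$ converts the super-level measure factor into powers of $Y_n/k^p$.

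\textbf{Fast-convergence and interpolation.} Assembling the previous pieces yields a recursion of the form
\[
Y_{n+1} \leq C\, b^n\, k^{-p\beta}\, (1-\sigma)^{-\mu}\, Y_n^{1+\beta},
\]
with constants $b, C, \mu > 0$ depending only on $\data$, once the tail terms have been absorbed into the threshold $k$. A classical fast-convergence lemma then implies $Y_n \to 0$, and hence $(u-\kappa)_\pm \leq k$ on $Q_{\sigma\rho,\sigma\theta}(z_o)$, whenever $k$ simultaneously dominates the initial value $Y_0^{1/p}$ and the tail term, with exactly the scaling prescribed by the statement. This establishes the bound for $\nu=p$. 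For $\nu \in (0,p)$, the pointwise interpolation $(u-\kappa)_\pm^p \leq \bigl[\sup_{Q_{\rho,\theta}}(u-\kappa)_\pm\bigr]^{p-\nu}(u-\kappa)_\pm^\nu$, applied on an auxiliary family of intermediate cylinders $Q_{\sigma_j\rho,\sigma_j\theta}$ with $\sigma_j = 1-(1-\sigma)/2^j$, combined with Young's inequality to reabsorb the supremum via a standard iteration lemma, yields the estimate for arbitrary $\nu \in (0,p)$.

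\textbf{Main obstacle.} The principal technical difficulty is the careful management of the tail terms along the iteration: each Caccioppoli step spawns a fresh tail on $B_{\rho_n}(x_o)$ at the intermediate level $\kappa_n$, and these must all be uniformly dominated by the single tail on $B_{\sigma\rho}(x_o)$ at the lowest level $\kappa$ without loss of the prescribed scaling in $\sigma$. A secondary subtlety is the sharp tracking of the dependence on $\sigma$ and on the geometric quantity $\boldsymbol{\cA}=\bigl(\rho^{sp}/\theta+\sigma^{-(N+sp)}\bigr)^{pq}(\theta/\rho^{sp})$, which records the interaction between the intrinsic fractional scaling $\theta\sim\rho^{sp}$ and the presence of an arbitrary ratio $\theta/\rho^{sp}$ in the hypotheses.
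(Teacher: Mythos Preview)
Your iteration-plus-interpolation outline captures Steps~3--4 of the paper's argument, but it omits the decisive ingredient of Steps~1--2: the \emph{time-dependent truncation}. The paper does not take constant levels $\kappa_n=\kappa+k(1-2^{-n})$; instead it chooses
\[
k(t)=\bigl(\ell(t)^{p-1}+k_{i+1}^{p-1}\bigr)^{\frac{1}{p-1}}+\kappa,
\qquad
\ell(t)^{p-1}=\widetilde{\gamma}\int_{-\theta}^{t}\!\!\int_{\R^N\setminus B_R}\frac{(u-\kappa)_+^{p-1}(y,\tau)}{|y|^{N+sp}}\,\d y\,\d\tau,
\]
so that $\tfrac{\d}{\d t}k(t)^{p-1}$ is exactly the outer-tail integrand. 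In the De~Giorgi class inequality \eqref{d2} this produces an extra term $-\gamma_{\mathrm{DG}}\iint\tfrac{\d}{\d t}(|k(t)|^{p-2}k(t))\zeta^p w_+\,\d x\,\d t$ which \emph{cancels} the portion of the tail over $\R^N\setminus B_R$ (the split $(\mathrm{III})=(\mathrm{III})_1+(\mathrm{III})_2$ in the paper). The tail then enters the final bound only through $\ell(0)$, i.e.\ the $L^{p-1}$-in-time average stated in the theorem.

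With your constant levels this cancellation is unavailable, and your factorisation
\[
\mathcal{T}_n \sim \bigl|\{(u-\kappa_n)_\pm>0\}\cap Q_n\bigr|\cdot\int_{t_o-\theta_n}^{t_o}\bigl[\tail((u-\kappa_n)_\pm(t);B_{\rho_n})\bigr]^{p-1}\d t
\]
is not valid: the actual term is $\iint_{Q_n}\zeta^p w_\pm(x,t)\,[\tail(t)]^{p-1}\,\d x\,\d t$, coupled in $t$. Decoupling forces either a $\sup_t$ on the tail (yielding only the $L^\infty$-tail result already in \cite{Nak23}) or a H\"older splitting that produces a higher-than-$(p-1)$ time exponent. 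Either way you do not reach the $L^{p-1}$-tail claimed here. A minor point: the time-supremum term in the $p$-homogeneous class carries $w_\pm^p$, not $w_\pm^2$.
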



\begin{remark} The constant $C_{\nu}$ blows up as soon as $\nu$ vanishes. Moreover, comparing with the previous result~\cite[Theorem 1.1]{Nak23}, one can obtain by interpolation, for $u \in \mathbf{PDG}_\pm^{s,p}(\Omega_T,\gamma_{\mathrm{DG}}, \infty)$,
\begin{align}\label{eq:another-bnd-1}
\sup_{Q_{\nicefrac{\rho}{2}, \nicefrac{\rho^{sp}}{2}}(z_0)}u_\pm &\leq \delta \sup_{t \in (t_0-\rho^{sp}, t_0)}\tail \left(u_\pm(t)\,; B_{\nicefrac{\rho}{2}}(x_0)\right) +C(\delta)\left(\dashint_{Q_{\rho}(z_0)}u_\pm^p\dxt\right)^{\nicefrac{1}{p}}
\end{align}
for a constant $C(\delta)$ that blows up as $\delta \downarrow 0$. The parameter $\delta \in (0,1]$, appearing in the display, plays a role of interpolation between the local and nonlocal terms.

One could expect naturally in Theorem~\ref{Thm:boundedness} that the $L^\infty$-tail in~\eqref{eq:another-bnd-1} replaced by $L^{p-1}$-ones, namely that for all $\delta \in (0,1)$
\begin{align}\label{eq:another-bnd-2}
\sup_{Q_{\nicefrac{\rho}{2}, \nicefrac{\rho^{sp}}{2}}(z_0)}u_\pm &\leq \delta \left(\dashint_{t_0-\rho^{sp}}^{t_0} \Big[\tail \left(u_\pm(t)\,; B_{\nicefrac{\rho}{2}}(x_0)\right) \Big]^{p-1}\d{t}\right)^{\nicefrac{1}{(p-1)}}+C(\delta)\left(\dashint_{Q_\rho(z_0)}u_\pm^p\dxt\right)^{\nicefrac{1}{p}}.
\end{align}
Unfortunately, this happens to be false, because of the refined tail term. The argument is similar to the ones presented in~\cite[Proposition 5.1]{Lia25} or~\cite[Example 5.2]{KW23a}.
\end{remark}

Next, we turn our attention to several nonlocal weak Harnack inequalities. The first result is the following nonlocal weak Harnack inequality under $L^{p-1+\eps}$-tail condition.
\begin{theorem}[Nonlocal weak Harnack inequality I]\label{Thm:weakHarnack1}
Let $u \in \mathbf{PDG}_{-}^{s,p}(\Omega_T,\gamma_{\mathrm{DG}}, \eps)$ in the sense of~Definition~\ref{def of DG} be such that $u \geq 0$ in $\cQ_R$. Then, there exist constants $\beta \in (0,1)$ and $C_{\mathsf{wH}}$, both depending only on $\data$, such that
\begin{align*}
&\left(\dashint_{B_\rho(x_0)}u^\beta(x,t_0)\d{x}\right)^{\nicefrac{1}{\beta}} \\[2pt]
& \quad \quad \leq C_{\mathsf{wH}} \inf_{B_{2\rho(x_0)}}u(t)+\left(\frac{\rho}{R}\right)^{\frac{sp\eps}{(p-1)(p-1+\eps)}} \left(\dashint_{I_R(t_0)} \Big[\tail \left(u_-(t)\,;B_R(x_0)\right)\Big]^{p-1+\eps}\d{t}\right)^{\nicefrac{1}{(p-1+\eps)}}
\end{align*}
for almost all $t \in \left(t_0+\frac{1}{2}(8\rho)^{sp},t_0+2(8\rho)^{sp}\right]$, provided that
\[
B_{2\rho}(x_0) \times \left(t_0+\frac{1}{2}(8\rho)^{sp},t_0+2(8\rho)^{sp}\right]\subset \cQ_R \Subset \Omega_T.
\]
\end{theorem}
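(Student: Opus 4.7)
The plan is to follow the measure-theoretic line pioneered by Gianazza--Vespri in the local parabolic setting and already adapted to the nonlocal framework in~\cite{Nak23}, without recourse to any Krylov--Safonov type covering. The three ingredients I would combine are: a nonlocal De Giorgi shrinking lemma, a forward-in-time expansion of positivity tailored to $\mathbf{PDG}_-^{s,p}$, and a Bombieri--Giusti iteration in the level variable that produces the small exponent $\beta=\beta(\data,\eps)$ in the $L^\beta$ average on $B_\rho(x_o)$.

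First I would use the Caccioppoli inequality built into $\mathbf{PDG}_-^{s,p}$, coupled with the $L^\infty$--$L^\nu$ estimate of~\autoref{Thm:boundedness} applied to $(u-k)_-$ (the sign assumption $u\ge 0$ on $\cQ_R$ making all these truncations well behaved), to establish a De Giorgi shrinking lemma: if the superlevel set $\{u(\cdot,t_o)\ge k\}\cap B_\rho(x_o)$ has measure at least $\alpha|B_\rho|$, then on a small sub-cylinder $B_{\rho/2}(x_o)\times(t_o,t_o+c\rho^{sp}]$ the superlevel set $\{u\ge \eta k\}$ still carries a definite fraction of the measure, up to a tail correction involving the $L^{p-1}$-tail of $u_-$ on $B_R(x_o)$. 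Next, I iterate the logarithmic energy inequality, again derived from the $\mathbf{PDG}_-^{s,p}$ definition, to propagate this density into a pointwise lower bound, expanding both in space (from $B_{\rho/2}$ up to $B_{2\rho}$) and forward in time, thus reaching the target cylinder $B_{2\rho}(x_o)\times(t_o+\tfrac12(8\rho)^{sp},t_o+2(8\rho)^{sp}]$. At this stage one arrives at a weak-type distributional estimate
\[
\bigl|\{x\in B_\rho(x_o) : u(x,t_o)> k\}\bigr| \le c\,|B_\rho|\,\left(\frac{\inf_{B_{2\rho}(x_o)}u(t)+\mathcal{T}}{k}\right)^{\beta_0},
\]
where $\mathcal{T}$ is the tail remainder displayed in the theorem. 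A standard Bombieri--Giusti integration in $k^{\beta-1}\d{k}$ for $\beta<\beta_0$ then yields the claimed $L^\beta$ average bound.

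The hard point, which I expect to absorb most of the work, is keeping the tail contribution under control through the expansion of positivity while ending up with the prescribed $L^{p-1+\eps}$-tail on the fixed outer ball $B_R(x_o)$. Each shrinking/expansion step naturally produces an $L^{p-1}$-tail on a sub-ball, and the reduction to a single tail on $B_R(x_o)$ requires dyadically decomposing the time interval $I_R(t_o)$, rescaling each piece, and applying H\"older's inequality in $t$ with the conjugate exponents $\tfrac{p-1+\eps}{p-1}$ and $\tfrac{p-1+\eps}{\eps}$. It is exactly this interpolation that generates the factor $(\rho/R)^{sp\eps/[(p-1)(p-1+\eps)]}$ in the statement, and carefully tracking the scaling through every iteration of the expansion of positivity is the main technical obstacle of the proof.
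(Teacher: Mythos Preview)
Your high-level architecture---prove a power-like expansion of positivity and then integrate it against $k^{\beta-1}\,\mathrm{d}k$---is exactly what the paper does, and your weak-type estimate $\alpha(k)\le c\bigl((\inf u+\mathcal T)/k\bigr)^{1/d}$ is precisely the output of the paper's \autoref{Lm:mt2}. The final step is nothing more than the layer cake formula with $\beta=1/(2d)$ and the cut $a=\inf u+\mathcal T$; calling it a Bombieri--Giusti iteration is a slight misnomer, but the computation you describe is correct.

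Where your route diverges from the paper is inside the expansion of positivity. You propose a logarithmic energy inequality and an appeal to \autoref{Thm:boundedness} for $(u-k)_-$; the paper uses neither. Instead it exploits the purely nonlocal \emph{cross term} on the left of the Caccioppoli inequality,
\[
\iint_{Q_{\rho,\tau}} (u-k)_-(x,t)\Bigl(\int_{\R^N}\frac{(u-k)_+^{p-1}(y,t)}{|x-y|^{N+sp}}\,\mathrm{d}y\Bigr)\mathrm{d}x\,\mathrm{d}t,
\]
which immediately compares the measure where $u$ is small against the measure where $u$ is large (\autoref{Lm:shrinking}). This replaces the entire logarithmic/clustering machinery of the local theory and is what makes the proof short; your logarithmic approach can likely be pushed through but is substantially heavier and, in the fractional De Giorgi class, has only been carried out under the extra restriction $sp\ge 1$ in~\cite{Nak23}.

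Your description of the tail bookkeeping is also more elaborate than necessary. There is no dyadic decomposition of $I_R(t_o)$: in each of \autoref{Lm:DGtype}, \autoref{Lm:mt}, \autoref{Lm:shrinking} the tail of $(u-k)_-$ over $B_{\rho_i}$ is split once as $B_R\setminus B_{\rho_i}$ plus $\R^N\setminus B_R$, the first piece is bounded by $ck^{p-1}$ using $u\ge 0$ in $\cQ_R$, and the second piece is handled by a \emph{single} H\"older inequality in $t$ with exponents $\tfrac{p-1+\eps}{p-1},\tfrac{p-1+\eps}{\eps}$, which is exactly where the factor $(\rho/R)^{sp\eps/[(p-1)(p-1+\eps)]}$ appears.
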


We observe that, letting $\eps = \infty$  in the above display, the $L^{p-1+\eps}$-tail term on the right side becomes $L^\infty$-tail exactly, and therefore the result covers~\cite[Theorem 1.1]{Nak23}. Now, in sharp contrast to the argument presented for Theorem~\ref{Thm:weakHarnack1}, we can derive a nonlocal weak Harnack inequality that improves the previous by controlling also the long-range influences.

\begin{theorem}[Nonlocal weak Harnack inequality II]\label{Thm:weakHarnack2}
Let $u \in \mathbf{PDG}_{-}^{s,p}(\Omega_T,\gamma_{\mathrm{DG}}, \eps)$ in the sense of~Definition~\ref{def of DG} be such that $u \geq 0$ in $\cQ_R$. Then, there exists $\eta \in (0,1)$ depending only on $\data$ such that
\begin{align*}
\inf_{B_\rho(x_0)} u(t)&+\left(\frac{\rho}{R}\right)^{\frac{sp\eps}{(p-1)(p-1+\eps)}}\left(\dashint_{I_R(t_0)} \Big[\tail \left(u_-(t)\,;B_R(x_0)\right)\Big]^{p-1+\eps}\d{t}\right)^{\nicefrac{1}{(p-1+\eps)}} \\
& \geq \eta \left[\left(\dashint_{Q_{2\rho}(z_0)}u^{p-1}\dxt \right)^{\nicefrac{1}{(p-1)}}+\left(\dashint_{t_0-(2\rho)^{sp}}^{t_0} \Big[\tail \left(u_+(t)\,;B_{2\rho}(x_0)\right)\Big]^{p-1}\d{t} \right)^{\nicefrac{1}{(p-1)}}\right]
\end{align*}
for almost every $t  \in \left(t_0+\frac{3}{4}(4\rho)^{sp}, t_0+(4\rho)^{sp} \right]$, provided that
\[
B_{2\rho} (x_0)\times \left(t_0-(2\rho)^{sp}, t_0+6(4\rho)^{sp}\right] \subset \cQ_R \Subset \Omega_T.
\]
\end{theorem}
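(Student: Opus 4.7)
The plan is to upgrade the single time-slice $L^\beta$-integral produced by Theorem~\ref{Thm:weakHarnack1} into a space-time $L^{p-1}$-integral on the entire past cylinder $Q_{2\rho}(z_o)$; the extra $L^{p-1}$-tail of $u_+$ on the right-hand side will arise naturally from the nonlocal cross-term in the Caccioppoli inequality for $(u-k)_-$ that is intrinsic to the class $\mathbf{PDG}_{-}^{s,p}$.

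First, I would apply Theorem~\ref{Thm:weakHarnack1} at every center time $\tau$ ranging over the past interval $(t_o-(2\rho)^{sp},t_o]$; the inclusion $B_{2\rho}(x_o)\times(t_o-(2\rho)^{sp},t_o+6(4\rho)^{sp}]\subset \cQ_R$ is arranged precisely so that, after a time translation, the future inf-window $(\tau+\tfrac{1}{2}(8\rho)^{sp},\tau+2(8\rho)^{sp}]$ contains a common target time $t^\ast\in(t_o+\tfrac{3}{4}(4\rho)^{sp},t_o+(4\rho)^{sp}]$. This gives
\[
\left(\dashint_{B_\rho(x_o)}u^\beta(x,\tau)\d{x}\right)^{\frac{1}{\beta}} \leq c\,\inf_{B_{2\rho}(x_o)}u(t^\ast)+\left(\frac{\rho}{R}\right)^{\frac{sp\eps}{(p-1)(p-1+\eps)}}\left(\dashint_{I_R(t_o)}\left[\tail(u_-(t);B_R(x_o))\right]^{p-1+\eps}\d{t}\right)^{\frac{1}{p-1+\eps}}
\]
uniformly in $\tau$. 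A Chebyshev application then turns this into a space-time level-set estimate for $\{u>\kappa\}\cap Q_{2\rho}(z_o)$.

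Next, I would convert this level-set information into the sought $L^{p-1}$-integral via the layer-cake identity $\biint_{Q_{2\rho}} u^{p-1}\dxt=\int_0^\infty(p-1)\kappa^{p-2}|\{u>\kappa\}\cap Q_{2\rho}(z_o)|\d{\kappa}$, splitting at a threshold $M$: the low-$\kappa$ part is controlled by the Chebyshev bound from Step~1, while the high-$\kappa$ part is handled by a sup-bound on a suitably shifted function obtained from the energy inequality in $\mathbf{PDG}_{-}^{s,p}$ (in the spirit of Theorem~\ref{Thm:boundedness}, but tailored so that only the truncation $(u-k)_-$ enters). Young's inequality then absorbs the resulting interpolation factors; the $L^{p-1}$-tail of $u_+$ in the final statement emerges precisely from the nonlocal cross-term in the Caccioppoli estimate for $(u-k)_-$, where $u_+$ outside the cylinder interacts with $(u-k)_-$ inside.

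The main obstacle is that $u\in\mathbf{PDG}_{-}^{s,p}$ does not yield a direct pointwise upper bound on $u$, which would otherwise make the layer-cake splitting routine via Theorem~\ref{Thm:boundedness}. Circumventing this requires a careful choice of the shift $k$ and a delicate balancing between the $L^\beta$-level-set bound from Step~1 and the tail of $u_+$, so that the tail of $u_-$ retains the exponent $p-1+\eps$ (inherited from Theorem~\ref{Thm:weakHarnack1}) while the tail of $u_+$ enters at the sharper exponent $p-1$ (coming from the Caccioppoli cross-term). Matching these exponents is what produces the precise geometric decay factor $(\rho/R)^{sp\eps/((p-1)(p-1+\eps))}$ on the left-hand side.
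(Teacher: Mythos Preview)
Your approach has a genuine structural gap. You propose to derive the $L^{p-1}$ estimate from the $L^\beta$ estimate of Theorem~\ref{Thm:weakHarnack1}, but $\beta<1\leq p-1$ (indeed $\beta=\tfrac{1}{2d}$ with $d>1$), so this amounts to a reverse H\"older inequality that cannot be obtained for free. In your layer-cake splitting, the high-$\kappa$ tail $\int_M^\infty \kappa^{p-2}|\{u>\kappa\}\cap Q_{2\rho}|\,\d\kappa$ cannot be controlled by any information available from $\mathbf{PDG}_-^{s,p}$: this class only constrains the truncations $(u-k)_-$, so it provides lower-type bounds on $u$, never upper bounds. A ``sup-bound on a suitably shifted function'' built from $(u-k)_-$ would bound $\inf u$ from below, not $u$ from above; hence it cannot prevent $u$ from concentrating large $L^{p-1}$ mass on a set of arbitrarily small measure. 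The paper's own Remark after Theorem~\ref{Thm:weakHarnack2} makes the direction explicit: Theorem~\ref{Thm:weakHarnack1} follows from Theorem~\ref{Thm:weakHarnack2} when $p-1>\beta$, not the other way around.

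The paper's proof avoids Theorem~\ref{Thm:weakHarnack1} entirely and instead exploits the cross-term
\[
\iint_{Q_{\rho,\tau}}(u-k)_-(x,t)\left(\int_{\R^N}\frac{(u-k)_+^{p-1}(y,t)}{|x-y|^{N+sp}}\,\d y\right)\d x\,\d t
\]
on the \emph{left} side of the Caccioppoli inequality~\eqref{d2} directly. Restricting the inner integral to $B_\rho$ and using $|x-y|\leq 2\rho$ yields a lower bound of the form $c\,k\,\big[(u^{p-1})_{Q_\rho}-k^{p-1}\big]\cdot\inf_t|\{u(\cdot,t)\leq \tfrac{k}{2}\}\cap B_\rho|$; restricting instead to $\R^N\setminus B_\rho$ produces the analogous bound with the $L^{p-1}$ tail of $u_+$ in place of $(u^{p-1})_{Q_\rho}$. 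Comparing against the right side ($\lesssim k^p|B_\rho|$) gives, at some time slice $t_\ast$, smallness of $|\{u(\cdot,t_\ast)\leq k\}\cap B_{2\rho}|$ once $k$ is chosen proportional to the right-hand side of the theorem (Lemmas~\ref{Lm:newmt1}, \ref{Lm:newmt2}). A De~Giorgi-type lemma that needs smallness of the sublevel set only on a single time slice (Lemma~\ref{Lm:weakDGtype}) then yields $u\geq\tfrac{1}{4}k$ on a small forward cylinder, after which the expansion of positivity (Lemma~\ref{Lm:mt2}) with $\alpha=1$ propagates this to the target window $\big(t_o+\tfrac{3}{4}(4\rho)^{sp},\,t_o+(4\rho)^{sp}\big]$. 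You correctly intuited that the $u_+$ tail comes from the cross-term, but it must be extracted as a \emph{lower} bound on the Caccioppoli left side, not pulled through an interpolation argument that the class cannot support.
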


\begin{remark} By directly applying the H\"older inequality, it is straightforward to check that when $(p-1)>\beta$ (this condition is satisfied for instance if $p\geq2$), Theorem~\ref{Thm:weakHarnack1} follows from Theorem~\ref{Thm:weakHarnack2}; while the aforementioned implication is not for free when $1<p<2$. 
 \end{remark}

Finally, the property of local H\"{o}lder continuity. To the best of our knowledge, the H\"{o}lder continuity for nonlocal De Giorgi classes has only been established  for the elliptic case (see~\cite{Coz17}) up to now. The control of tail terms is undoubtedly decisive and the time-gap phenomenon encumbers the elliptic strategy Harnack-to-H\"older estimates. Here we found expedient to adopt the De Giorgi measure theoretical approach, in a fashion reminiscent of the proof of \cite{Lia24b}. We derive hereafter a H\"{o}lder modulus of continuity, under assuming an optimal $L^{p-1+\eps}$-tail condition. It is noteworthy that the result remains novel even when $\eps = \infty$.

\begin{theorem}[H\"{o}lder modulus of continuity]\label{Thm:Holder modulus}
Let $u \in \mathbf{PDG}^{s,p}(\Omega_T,\gamma_{\mathrm{DG}}, \eps)$ in the sense of Definition~\ref{def of DG}. Then $u$ has a locally H\"{o}lder continuous representative in $\Omega_T$. More precisely, let $z_0=(x_0,t_0) \in \Omega_T$, and assume the validity of the inclusion $Q_r(z_0) \subset Q_\rho(z_0) \subset \cQ_R \Subset \Omega_T$. Then, there exist constants $C(\data) \in (1,\infty)$ and $\beta(d,s,p,\gamma_{\mathrm{DG}}) \in (0,1)$ such that 
\[
\osc_{Q_r(z_0)} u \leq C\boldsymbol{\om} \left(\frac{r}{\rho}\right)^\beta,
\]
holds, where
\[
\boldsymbol{\om}=2\sup_{\cQ_R} |u|+\left(\frac{\rho}{R}\right)^{\frac{sp\eps}{(p-1)(p-1+\eps)}} \left(\dashint_{I_R(t_0)} \Big[\tail \left(u(t)\,; B_R(x_0)\right)\Big]^{p-1+\eps}\d{t}\right)^{\nicefrac{1}{(p-1+\eps)}}.
\]
The constant $\boldsymbol{\gamma}_\eps$ is stable as $\eps \to \infty$.
\end{theorem}

As a fairly straightforward consequence, we have the following rigidity result.

\begin{theorem}[Liouville-type rigidity theorem]\label{Thm:Liouville} Let $u$ belong to $\mathbf{PDG}^{s,p}(\R^{d+1},\gamma_{\mathrm{DG}}, \eps)$ in the local sense of Definition~\ref{def of DG}. If $u$ is bounded, then it is constant. 
\end{theorem}

\subsection{Proofs of Theorems~\ref{Thm:fullHarnack} and~\ref{Thm:Liouville}}\label{Sect.2.3}
The first theorem is obtained in Moser--Trudinger's fashion by chaining Theorem~\ref{Thm:boundedness} with Theorem~\ref{Thm:weakHarnack2}, while the second one is reminiscent of \cite{Gla69}.

\begin{proof}[Proof of Theorem~\ref{Thm:fullHarnack}]
Let $(x_0,t_0)=(0,0)$ for simplicity. Let $M:=\sup_{Q_{\rho, \nicefrac{(2\rho)^{sp}}{2}} }u$. Assume that $u$ is nonnegative $\cQ_{R}$ with $R \in (0, \infty)$ being specified later. In Theorem~\ref{Thm:boundedness}, let $\nu=p-1$, $\sigma=\nicefrac{1}{2}$ and replace $\rho$ by $2\rho$, and, subsequently, take $\theta=(2\rho)^{sp}$. Then, the relevant constants depend only on $(d,s,p,\gamma_{\mathrm{DG}})$. Thus, we have that
\begin{align*}
M \leq C\left(\dashint_{-(2\rho)^{sp}}^0 \Big[\tail\left(u_+(t) ; B_\rho\right)\Big]^{p-1}\d{t}\right)^{\nicefrac{1}{(p-1)}}+C\left(\dashint_{Q_{2\rho}}u_+^{p-1}\dxt\right)^{\nicefrac{1}{(p-1)}}
\end{align*}
for a constant $C(d,s,p,\gamma_{\mathrm{DG}})<\infty$. As for the tail term, we can estimate that
\begin{align*}
\Big[\tail\left(u_+(t)\,; B_\rho\right)\Big]^{p-1}
&\leq \frac{1}{2^{sp}}\Big[\tail\left(u_+(t) ; B_{2\rho}\right)\Big]^{p-1}+C(d)\dashint_{B_{2\rho}}u_+^{p-1}(t)\d{x}.
\end{align*}
On the other hand, by Theorem~\ref{Thm:weakHarnack2} there exists $\eta(\data) \in (0,1)$ such that
\begin{align*}
&\inf_{B_\rho\times \left(\frac{3}{4}(4\rho)^{sp}, (4\rho)^{sp} \right]}u+\left(\frac{\rho}{R}\right)^{\frac{sp\eps}{(p-1)(p-1+\eps)}}\left(\dashint_{I_R} \Big[\tail \left(u_-(t)\,;B_R\right)\Big]^{p-1+\eps}\d{t}\right)^{\nicefrac{1}{(p-1+\eps)}} \\
& \quad \quad \quad \geq \eta \left[\left(\dashint_{Q_{2\rho}}u^{p-1}\dxt \right)^{\nicefrac{1}{(p-1)}}+\left(\dashint_{-(2\rho)^{sp}}^{0} \Big[\tail \left(u_+(t)\,;B_{2\rho}\right)\Big]^{p-1}\d{t} \right)^{\nicefrac{1}{(p-1)}}\right],
\end{align*}
provided that $B_{2\rho}\times \left(-(2\rho)^{sp}, 6(4\rho)^{sp}\right] \subset \cQ_R \Subset \Omega_T$.

Finally, we are able to deduce the following tail-controlled inequality
\begin{align}\label{e.tail.result}
&\left(\dashint_{-\frac{1}{2}(2\rho)^{sp}}^0 \Big[\tail (u_+(t) ; B_\rho )\Big]^{p-1}\d{t}\right)^{\nicefrac{1}{(p-1)}} \notag\\
&\quad \quad \quad \leq C(\data)\left[M+\left(\frac{R}{\rho}\right)^{\nicefrac{sp}{(p-1+\eps)}}\left(\dashint_{I_R} \Big[\tail \left(u_-(t)\,;B_R\right)\Big]^{p-1+\eps}\d{t}\right)^{\nicefrac{1}{(p-1+\eps)}}\right]. 
\end{align}
Indeed, we apply~\eqref{d3'}$_-$ with $Q_{\rho,\frac{1}{2}(2\rho)^{sp}} \subset Q_{2\rho,(2\rho)^{sp}}$ and $k=2M$. Using this and $u \geq 0$ in $Q_{2\rho,(2\rho)^{sp}} \subset \cQ_R$, we estimate that, for $c=c(d,s,p,\gamma_{\mathrm{DG}})<\infty$, 
\begin{align*}
&\int_{Q_{\rho,\frac{1}{2}(2\rho)^{sp}}}(u(x,t)-2M)_-\left[\int_{\R^d} \frac{(u(y,t)-2M)_+^{p-1}}{|x-y|^{d+sp}}\d{y}\right]\d{x}\d{t}\\
&\quad  \leq \frac{c}{\rho^{sp}}\int_{Q_{2\rho,(2\rho)^{sp}}}(u-2M)_-^p\d{x}\d{t} \\
&\quad \quad \quad \quad +\frac{c}{\rho^{sp}}\int_{Q_{2\rho,(2\rho)^{sp}}}(u-2M)_-\Big[\tail \left((u-2M)_-\,;B_{2\rho}\right)\Big]^{p-1}\d{x}\d{t}\\
&\quad  \leq  cM^p\rho^d+cM\rho^d \dashint_{-(2\rho)^{sp}}^0\Big[\tail \left(u_-(t)\,;B_{2\rho}\right)\Big]^{p-1}\d{t}.
\end{align*}
Using $0 \leq u \leq M$ in $Q_{\rho,\frac{1}{2}(2\rho)^{sp}} \subset \cQ_R$ and the following elemental inequality~\cite[Lemma 4.4]{Coz17}
\[
(u(y,t)-2M)_+^{p-1} \geq (1 \wedge  2^{2-p}) u_+(y,t)^{p-1}-(2M)^{p-1},
\]
a similar argument to the proof of Lemma~\ref{Lm:newmt2} implies that
\begin{align*}
&\int_{Q_{\rho,\frac{1}{2}(2\rho)^{sp}}}(u(x,t)-2M)_-\left[\int_{\R^d} \frac{(u(y,t)-2M)_+^{p-1}}{|x-y|^{d+sp}}\d{y}\right]\d{x}\d{t} \\
&\quad \quad \quad \quad \quad \geq cM\rho^d \dashint_{-\frac{1}{2}(2\rho)^{sp}}^0\Big[\tail \left(u_+(t)\,;B_{\rho}\right)\Big]^{p-1}\d{t}-cM^p\rho^d
\end{align*}
for a constant $c(d,s,p)<\infty$. Combining the above displays and rearranging yield that
\[
\left(\dashint_{-\frac{1}{2}(2\rho)^{sp}}^0\Big[\tail \left(u_+(t)\,;B_{\rho}\right)\Big]^{p-1}\d{t}\right)^{\nicefrac{1}{(p-1)}} \leq c \left[M+\left(\dashint_{-(2\rho)^{sp}}^0\Big[\tail \left(u_-(t)\,;B_{2\rho}\right)\Big]^{p-1}\d{t} \right)^{\nicefrac{1}{(p-1)}}\right].
\]
Finally, rearranging the tail term on the right side and using H\"{o}lder inequality conclude~\eqref{e.tail.result}.

Altogether, combining the preceding three estimates and rearranging conclude that
\begin{align*}
&\sup_{B_{\rho} \times \left(-\frac{1}{2}(2\rho)^{sp}, 0\right]}u +\left(\dashint_{-\frac{1}{2}(2\rho)^{sp}}^{0} \Big[\tail (u_+(t) ; B_\rho )\Big]^{p-1}\d{t}\right)^{\frac{1}{p-1}} \notag\\[2pt]
& \quad \leq C_\mathsf{H}\left[ \inf_{B_\rho \times \left(\frac{3}{4}(4\rho)^{sp}, (4\rho)^{sp} \right]}u \right. \notag\\[2pt]
&\left. \quad \quad \quad \quad \quad +\left[\left(\frac{R}{\rho}\right)^{\frac{sp}{p-1+\eps}}+\left(\frac{\rho}{R}\right)^{\frac{sp\eps}{(p-1)(p-1+\eps)}} \right]\left(\dashint_{I_R} \Big[\tail \left(u_-(t)\,;B_R\right)\Big]^{p-1+\eps}\d{t}\right)^{\frac{1}{p-1+\eps}}\right]\,.
\end{align*}
Finally, selecting $R=4\cdot 6^{\nicefrac{1}{sp}}\rho=:R_0$ in the last display concludes, taking into account that $B_{2\rho}\times \left(-(2\rho)^{sp}, 6(4\rho)^{sp}\right] \subset \cQ_{R_0}$, that the statement of Theorem~\ref{Thm:fullHarnack}. The proof is complete.
\end{proof}

We next apply Theorem~\ref{Thm:Holder modulus} to prove the Liouville type rigidity, thereby completing the proof of~Theorem~\ref{Thm:Liouville}.

\begin{proof}[Proof of Theorem~\ref{Thm:Liouville}]
Set $M:= \sup_{\R^{d+1}} |u|<+\infty$. We then estimate that
\begin{align*}
\boldsymbol{\om}&=2\sup_{\cQ_R(z_0)} |u|+\left(\frac{\rho}{R}\right)^{\frac{sp\eps}{(p-1)(p-1+\eps)}} \left(\dashint_{I_R(t_0)} \Big[\tail \left(u(t)\,;B_R(x_0)\right)\Big]^{p-1+\eps}\d{t}\right)^{\nicefrac{1}{(p-1+\eps)}}\\&\leq M\left[ 2+\left(\frac{\rho}{R}\right)^{\frac{sp\eps}{(p-1)(p-1+\eps)}}  R^{sp} \int_{\R^d\setminus{B_R(x_0)}} \frac{\d{x}}{|x-x_0|^{d+sp}}\right]\\
&\leq M\left[2+\frac{\omega_d}{sp}\left(\frac{\rho}{R}\right)^{\frac{sp\eps}{(p-1)(p-1+\eps)}} \right] \leq \gamma M,
\end{align*}
for $\gamma=2+\omega_d/sp$, where $\omega_d=|\mathbb{S}_1^{d-1}|=d|B_1|$ is the $(d-1)$-dimensional surface measure of the unit sphere in $\R^d$. We then choose $(x,t)=z_0$, $(y,t^\prime)$ in $\R^{d+1}$. Let $r>0$ be fixed big enough to satisfy $(y,t^\prime) \in Q_r(z_0)$. Finally, selecting the continuous representative of $u$ (see for~\cite[Section 2.1]{CCMV25}), we let $R=2\rho$ and send $\rho \rightarrow \infty$ in the oscillation estimate to conclude
\[
|u(x,t)-u(y,t^\prime)| \leq \osc_{Q_r(z_0)} u \leq C\boldsymbol{\om} \left(\frac{r}{\rho}\right)^\beta\leq C\gamma M \left(\frac{r}{\rho}\right)^\beta \rightarrow 0,
\] 
as desired.
\end{proof}

\section{Local boundedness}\label{Sect.3}

\subsection{Qualitative local boundedness}
In this subsection, we state the key auxiliary proposition which is used to prove Theorem~\ref{Thm:boundedness}.

\begin{proposition}[Qualitative local boundedness]\label{Prop:qualitative bnd}
For $\eps \in (0,\infty)$ let $u$ belong to $\mathbf{PDG}_\pm^{s,p}(\Omega_T,\gamma_{\mathrm{DG}}, \eps)$ in the sense of Definition~\ref{def of DG}. If  $u \in L^{p-1+\eps}_{\mathrm{loc}}(\Omega_T)$, then, $u$ is locally bounded in $\Omega_T$. More precisely, there exists a constant $C>0$ depending only on $\data$ such that on every concentric cylinders $\sigma^\prime Q_{R,S}(z_0) \subset \sigma Q_{R,S}(z_0) \subset \Omega_T$ for $z_0=(x_0,t_0) \in \Omega_T$ and $0<\sigma^\prime < \sigma \leq 1$, we have
\begin{align*}
\sup_{\sigma^\prime Q_{R,S}(z_0)} u_\pm &\leq \frac{C}{(\sigma-\sigma^\prime)^{\frac{d+p}{(\kappa-1)(p-1+\eps)}}}  \left[\left(\frac{S}{R^{sp}}+1 \right)+\frac{S}{R^{sp+\frac{d(p-1)}{p-1+\eps}}} \widetilde{\mathsf{Tail}}\right]^{\frac{\kappa}{(\kappa-1)(p-1+\eps)}} \\[4pt]
& \quad \quad \quad \times \left(\dashint_{\sigma Q_{R,S}(z_0)}\left[1+u_\pm^{p-1+\eps} \right]\dxt \right)^{\nicefrac{1}{(p-1+\eps)}},
\end{align*}
where $\kappa:=1+\frac{\kappa_\ast-1}{\kappa_\ast}$ with $\kappa_\ast$ is the Sobolev embedding exponent as in Proposition~\ref{FS}, and to lighten the notation we set $\sigma Q_{R,S}(z_0):=Q_{\sigma R, \sigma S}(z_0)$ and
\[
\widetilde{\mathsf{Tail}}:=\left(\dashint_{t_0-\sigma S}^{t_0} \Big[\tail (u_\pm(t)\,; B_{\sigma^\prime R}(x_0) )\Big]^{p-1+\eps}\d{t}\right)^{\frac{p-1}{p-1+\eps}}.
\]
\end{proposition}

\begin{proof} It suffices to check the plus case only, and we split the proof into three steps.
\smallskip

\emph{Step 1.}\quad As we will see later at the beginning of Section~\ref{Sect.4}, selecting a proper cutoff function implies~\eqref{d3'}$_+$. Let $(x_0,t_0)=(0,0)$. For $0<\rho <R$ and $0<\tau <S$, multiply~\eqref{d3'}$_+$ with $r_1=(R+\rho)/2$, $r_2=R$, $\tau_1=\tau$ and $\tau_2=S$ by $k^\alpha$ for $\alpha >-1$ and discard the two fractional terms on the left side of~\eqref{d3'}$_+$. We then integrate the resulting inequality over $k \in [0, \infty)$ to get, for every $t \in (-S,0)$, 
\begin{align}\label{eq:quali-bnd-1}
\int_0^\infty &k^\alpha\int_{B_{\frac{R+\rho}{2}}}(u(t)-k)_+^p\d{x}\d{k}\notag \\[4pt]
&\leq \gamma_{\mathrm{DG}}\left[\frac{2^pR^{(1-s)p}}{(R-\rho)^p} +\frac{1}{S-\tau}\right] \int_0^\infty k^\alpha \int_{Q_{R,S}}(u-k)_+^p\dxt \d{k} \notag \\[4pt]
&\quad \quad +\frac{\gamma_{\mathrm{DG}}  2^{d+sp}R^{d}}{(R-\rho)^{d+sp}}\int_0^\infty k^\alpha \int_{Q_{R,S}}(u-k)_+\d{x} \Big[\tail ((u-k)_+(t)\,; B_{R} )\Big]^{p-1}\d{t} \d{k},
\end{align}
After making a change of valuable $ k \to u_+ \lambda$ and Fubini's theorem, a straightforward manipulation implies that the first term on the left side of~\eqref{eq:quali-bnd-1} is written as
\begin{align*}
\int_0^\infty k^\alpha \int_{B_{\frac{R+\rho}{2}}}(u(t)-k)_+^p\d{x}\d{k}=\left(\int_0^1\lambda^\alpha (1-\lambda)^p\d{\lambda}\right)\int_{B_{\frac{R+\rho}{2}}}u_+^{\alpha+p+1}(t)\d{x}.
\end{align*}
This, together with Lemma~\ref{Lm:useful}, implies that
\[
\int_0^\infty k^\alpha \int_{B_{\frac{R+\rho}{2}}}(u(t)-k)_+^p\d{x}\d{k} \geq \frac{1}{2(\alpha+p+1)^{p+2}}\left(\int_{B_{\frac{R+\rho}{2}}}u_+^{\alpha+p+1}(t)\d{x} \right)
\]
and, likewise, the first integral on the right hand of~\eqref{eq:quali-bnd-1} is estimated as
\[
\int_0^\infty k^\alpha \int_{Q_{R,S}}(u-k)_+^p\dxt \d{k} \leq \frac{2}{\alpha+p+1}\int_{Q_{R,S}}u_+^{\alpha+p+1}\dxt.
\]
Analogously, by further appealing to Fubini's theorem and Lemma~\ref{Lm:useful} we estimate that
\begin{align*}
\int_0^\infty &k^\alpha \int_{Q_{R,S}}(u-k)_+\d{x} \Big[\tail ((u-k)_+(t)\,; B_{R} )\Big]^{p-1}\d{t} \d{k}\\
&= \left(\int_0^1 \lambda^\alpha (1-\lambda)^p\d{\lambda} \right)\int_{Q_{R,S}}u_+^{\alpha+2}\d{x} \Big[\tail (u_+(t)\,; B_{R} )\Big]^{p-1}\d{t}\\
&\leq \frac{2}{\alpha+p+1}\int_{Q_{R,S}}u_+^{\alpha+2}\d{x} \Big[\tail (u_+(t)\,; B_{R} )\Big]^{p-1}\d{t}, 
\end{align*}
which, together with H\"{o}lder's inequality with exponents $\left(\frac{p-1+\eps}{\eps}, \frac{p-1+\eps}{p-1}\right)$, yields that
\begin{align*}
\int_0^\infty &k^\alpha \int_{Q_{R,S}}(u-k)_+\d{x} \Big[\tail ((u-k)_+(t)\,; B_{R} )\Big]^{p-1}\d{t} \d{k}\\
&\leq \frac{2}{\alpha+p+1} \left(\int_{-S}^0 \Big[\tail (u_+(t)\,; B_{R} )\Big]^{p-1+\eps}\d{t}\right)^{\frac{p-1}{p-1+\eps}} \\
&\quad \quad \quad \quad \,\, \,\times \left(\int_{Q_{R,S}}u_+^{(\alpha+2)\frac{p-1+\eps}{\eps}}\dxt \right)^{\frac{\eps}{p-1+\eps}}.
\end{align*}
Combining the previous three displays with~\eqref{eq:quali-bnd-1} and rearranging give that
\begin{align}\label{eq:quali-bnd-2}
 \frac{1}{(\alpha+p+1)^{p+1}}&\sup_{t \in (-S, \,0)} \dashint_{B_{\frac{R+\rho}{2}}}u_+^{\alpha+p+1}(t)\d{x}  \notag \\[4pt]
&\leq C\left[\frac{R^{(1-s)p}}{(R-\rho)^p} +\frac{1}{S-\tau}\right] \frac{|Q_{R,S}|}{|B_{\frac{R+\rho}{2}}|}\dashint_{Q_{R,S}} u_+^{\alpha+p+1}\dxt \notag\\[4pt]
& \quad \quad \quad +\frac{C R^{d}}{(R-\rho)^{d+sp}} \frac{|Q_{R,S}|^{\frac{\eps}{p-1+\eps}}}{|B_{\frac{R+\rho}{2}}|}\left(\int_{-S}^0 \Big[\tail (u_+(t)\,; B_{R} )\Big]^{p-1+\eps}\d{t}\right)^{\frac{p-1}{p-1+\eps}}  \notag\\[4pt]
&\quad \quad \quad \quad \quad \quad \quad \quad \quad  \times \left(\dashint_{Q_{R,S}}u_+^{(\alpha+2)\frac{p-1+\eps}{\eps}}\dxt\right)^{\frac{\eps}{p-1+\eps}} \notag\\[4pt]
&\leq C\left[\frac{R^{(1-s)p}}{(R-\rho)^p} +\frac{1}{S-\tau}\right] \frac{|Q_{R,S}|}{|B_{\frac{R+\rho}{2}}|}\dashint_{Q_{R,S}} u_+^{\alpha+p+1}\dxt \notag\\[4pt]
& \quad \quad \quad +\frac{C R^{d}}{(R-\rho)^{d+sp}} \frac{|Q_{R,S}|^{\frac{\eps}{p-1+\eps}}}{|B_{\frac{R+\rho}{2}}|}\left(\int_{-S}^0 \Big[\tail (u_+(t)\,; B_{R} )\Big]^{p-1+\eps}\d{t}\right)^{\frac{p-1}{p-1+\eps}}  \notag\\[4pt]
&\quad \quad \quad \quad \quad \quad \quad \quad \quad  \times \left(\dashint_{Q_{R,S}}\left[ 1+u_+^{(\alpha+2)\frac{p-1+\eps}{\eps}} \right] \dxt\right)
\end{align}
with a positive constant $C(\data)<\infty$, where in the last line we used
\[
\left(\dashint_{Q_{R,S}}u_+^{(\alpha+2)\frac{p-1+\eps}{\eps}}\dxt\right)^{\frac{\eps}{p-1+\eps}} \leq \dashint_{Q_{R,S}}\left[ 1+u_+^{(\alpha+2)\frac{p-1+\eps}{\eps}} \right] \dxt.
\]
For $\eps >1$, we select $\alpha \in (-1,\infty)$ which satisfies
\[
\alpha+p+1=(\alpha+2)\frac{p-1+\eps}{\eps} \quad \iff \quad \alpha=\eps-2.
\]
Inserting this into~\eqref{eq:quali-bnd-2} yields
\begin{align}\label{eq:quali-bnd-3}
 \frac{1}{(p-1+\eps)^{p+1}}&\sup_{t \in (-S, \,0)} \dashint_{B_{\frac{R+\rho}{2}}}u_+^{p-1+\eps}(t)\d{x}  \notag \\[4pt]
&\leq C\left\{\left[\frac{R^{(1-s)p}}{(R-\rho)^p} +\frac{1}{S-\tau}\right] \frac{|Q_{R,S}|}{|B_{\frac{R+\rho}{2}}|}+\frac{R^{d}}{(R-\rho)^{d+sp}} \frac{|Q_{R,S}|^{\frac{\eps}{p-1+\eps}}}{|B_{\frac{R+\rho}{2}}|}\mathbf{T}_{R,S}\right\}  \notag\\[4pt]
&\quad \quad \quad \quad \quad \quad \quad \quad \quad  \times \left(\dashint_{Q_{R,S}} \left[1+u_+^{p-1+\eps} \right]\dxt\right),
\end{align}
where we used the shot-hand notation 
\[
\mathbf{T}_{R,S}:=\left(\int_{-S}^0 \Big[\tail (u_+(t)\,; B_{R} )\Big]^{p-1+\eps}\d{t}\right)^{\frac{p-1}{p-1+\eps}}\,.
\]

\emph{Step 2.}\quad Having at hand~\eqref{eq:quali-bnd-3}, we derive a reverse H\"{o}lder type inequality. To lighten the notation, we denote $w:=u_+^{\nicefrac{(p-1+\eps)}{p}}$ and 
\[
\widetilde{Q}:=Q_{\frac{R+\rho}{2}, \frac{S+\tau}{2}}=\widetilde{B}\times \widetilde{T}, \quad \widetilde{B}:=B_{\frac{R+\rho}{2}}, \quad \widetilde{T}:=\left(-\tfrac{S+\tau}{2}, 0\right).
\]
Let $\varphi$ be a cutoff function in $Q_{R,S}$ satisfying
\[
\1_{Q_{\rho,\tau}} \leq \varphi \leq \1_{\widetilde{Q}}, \quad |\nabla \varphi| \lesssim 1/(R-\rho), \quad \supp \varphi \subset \widetilde{Q}.
\]
Upon appealing to Proposition~\ref{FS} with $\mathsf{d}=\nicefrac{(R-\rho)}{2R} \in (0,1)$ we obtain that, for $\kappa=1+\frac{\kappa_\ast-1}{\kappa_\ast}$, 
\begin{align}\label{eq:quali-bnd-4}
\dashint_{Q_{\rho, \tau}}&w^{\kappa p}\dxt \leq \frac{1}{|Q_{\rho, \tau}|}\dashint_{\widetilde{Q}}(\varphi w)^{\kappa p}\dxt \notag\\[4pt]
&\leq \frac{C}{|Q_{\rho, \tau}|}\left[ \left(\frac{R+\rho}{2}\right)^{sp} \int_{\widetilde{T}} \iint \nolimits_{\widetilde{B} \times \widetilde{B}} \frac{|\varphi w(x,t)-\varphi w(y,t)|^p}{|x-y|^{d+sp}}\dxyt \right.\notag\\
&\quad \quad \quad \quad \left.+\left(\frac{2R}{R-\rho}\right)^{d+sp}\int_{\widetilde{Q}}(\varphi w)^p\dxt\right] \left(\sup_{t \in \widetilde{T}} \dashint_{\widetilde{B}}(\varphi w)^p(t)\d{x} \right)^{\frac{\kappa_\ast-1}{\kappa_\ast}}\notag\\[4pt]
&\leq C\frac{|Q_{R,S}|}{|Q_{\rho, \tau}|}\left[ \left(\frac{R+\rho}{2}\right)^{sp}\dashint_{Q_{R,S}}w^p(x,t) \left(\int_{\widetilde{B}} \frac{\d{y}}{|x-y|^{d+sp}}\right)\dxt \right.\notag\\
&\quad \quad \quad \quad \left.+\left(\frac{2R}{R-\rho}\right)^{d+sp}\int_{\widetilde{Q}}(\varphi w)^p\dxt\right] \left(\sup_{t \in (-S,0)} \dashint_{\widetilde{B}}w^p(t)\d{x} \right)^{\frac{\kappa_\ast-1}{\kappa_\ast}} \notag\\[4pt]
&\leq C\frac{|Q_{R,S}|}{|Q_{\rho, \tau}|} \left(\frac{2R}{R-\rho}\right)^{d+sp} \left[\dashint_{Q_{R,S}} w^p\dxt+\left(\sup_{t \in (-S,0)} \dashint_{B_{\frac{R+\rho}{2}}}w^p(t)\d{x} \right)\right]^\kappa,
\end{align}
where, in the penultimate line, we used
\[
\int_{\widetilde{B}} \frac{\d{y}}{|x-y|^{d+sp}} \leq \int_{B_{R+\rho}(x)} \frac{\d{y}}{|x-y|^{d+sp}} \lesssim_{d,s,p}\left(\frac{R+\rho}{2}\right)^{-sp}, \quad \forall x \in \widetilde{B}\equiv B_{\frac{R+\rho}{2}}.
\]
Merging this last estimate~\eqref{eq:quali-bnd-4} with~\eqref{eq:quali-bnd-3} concludes that
\begin{align}\label{eq:quali-bnd-5}
\dashint_{Q_{\rho, \tau}}&w^{\kappa p}\dxt \leq C(p-1+\eps)^{(p+1)\kappa} \frac{|Q_{R,S}|}{|Q_{\rho, \tau}|} \left(\frac{2R}{R-\rho}\right)^{d+sp} \notag \\[4pt]
&\times \left\{\left[\frac{R^{(1-s)p}}{(R-\rho)^p} +\frac{1}{S-\tau}\right] \frac{|Q_{R,S}|}{|B_{\frac{R+\rho}{2}}|}+\frac{R^{d}}{(R-\rho)^{d+sp}} \frac{|Q_{R,S}|^{\frac{\eps}{p-1+\eps}}}{|B_{\frac{R+\rho}{2}}|}\mathbf{T}_{R,S}\right\}^\kappa  \notag\\[4pt]
&\quad \quad \quad \quad \quad \quad \quad \quad \quad  \times \left(\dashint_{Q_{R,S}} \left[1+w^{p} \right]\dxt\right)^\kappa.
\end{align}

\emph{Step 3.}\quad We perform a Moser-type iteration using~\eqref{eq:quali-bnd-5}. For $0<\sigma^\prime <\sigma \leq 1$ and $i \in \N_0$, let  
\[
\begin{cases}
\rho_i:=\sigma^\prime R +\dfrac{\sigma-\sigma^\prime}{2^{i}}R, \quad \tau_i:=\sigma^\prime S +\dfrac{\sigma-\sigma^\prime}{2^{i}}S,\\[8pt]
\widetilde{\rho}_i:=\dfrac{1}{2}(\rho_i+\rho_{i+1}), \quad \widetilde{\tau}_i:=\dfrac{1}{2}(\tau_i+\tau_{i+1}),\\[4pt]
B_i:=B_{\rho_i}, \quad T_i:=(-\tau_i,0], \quad Q_i:=B_i \times T_i, \\[4pt]
\widetilde{B}_i:=B_{\widetilde{\rho}_i}, \quad \widetilde{T}_i:=(-\widetilde{\tau}_i,0], \quad \widetilde{Q}_i:=\widetilde{B}_i \times \widetilde{T}_i.
\end{cases}
\]
Observe, by construction, that
\[
\sigma Q_{R, S}=Q_0 \supset \cdots \supset Q_i \supset Q_{i+1} \supset \cdots \supset Q_\infty=\sigma^\prime Q_{R, S}, \quad \forall i \in \N_0.
\]
Analogously, we define $\{\chi_i\}_{i \in \N_0}$ via $\chi_i:=\kappa^i p$ and then, recursively, for $i \in \N_0$,
\[
W_i:=\left(\dashint_{Q_i}\left[1+w^{\chi_i}\right]\dxt \right)^{\nicefrac{1}{\chi_i}} \quad \mbox{and} \quad \mathbf{T}_i:=\mathbf{T}_{\rho_i, \tau_i}.
\]
Note that $\chi_i \to \infty$ as $i \to \infty$. It is straightforward to check that
\begin{align*}
\frac{|Q_{i}|}{|Q_{i+1}|} \left(\frac{2\rho_i}{\rho_i-\rho_{i+1}}\right)^{d+sp} &\leq 2^{d+1} \left(\frac{2^{i+2}}{\sigma-\sigma^\prime}\right)^{d+sp},\\[4pt]
\left[\frac{\rho_i^{(1-s)p}}{(\rho_i-\rho_{i+1})^p} +\frac{1}{\tau_i-\tau_{i+1}}\right] \frac{|Q_{i}|}{|\widetilde{B}_{i}|} &\leq 2^d\left(\frac{2^{i+1}}{\sigma-\sigma^\prime}\right)^{p}\left(\frac{S}{R^{sp}}+1 \right),\\[4pt]
\frac{\rho_i^{d}}{(\rho_i-\rho_{i+1})^{d+sp}} \frac{|Q_{i}|^{\frac{\eps}{p-1+\eps}}}{|\widetilde{B}_{i}|} \mathbf{T}_i&\leq C(d,p,\eps) \frac{4^{i(d+sp)}}{(\sigma-\sigma^\prime)^d}\frac{S}{R^{sp+\frac{d(p-1)}{p-1+\eps}}} \\
& \quad \quad \times \underbrace{\left(\dashint_{-\sigma S}^0 \Big[\tail (u_+(t)\,; B_{\sigma^\prime R} )\Big]^{p-1+\eps}\d{t}\right)^{\frac{p-1}{p-1+\eps}}}_{=:\overline{\mathbf{T}}}.
\end{align*}
Thus, appealing to~\eqref{eq:quali-bnd-5} over $Q_{i+1}$ and $Q_i$ and rearranging give
\begin{align*}
\dashint_{Q_{i+1}}w^{\chi_{i+1}}\dxt &\leq C(p-1+\eps)^{(p+1)\kappa} \frac{4^{i\left[(d+sp)+(d+p)\kappa \right]}}{(\sigma-\sigma^\prime)^{d+p}}  \\[4pt]
&\quad \times \left[\left(\frac{S}{R^{sp}}+1 \right)+\frac{S}{R^{sp+\frac{d(p-1)}{p-1+\eps}}} \overline{\mathbf{T}}\right]^\kappa \left(\dashint_{Q_{i}} \left[1+w^{\chi_i} \right]\dxt\right)^\kappa
\end{align*}
and therefore, adding $1$ to the left-hand side implies that the recursive inequality
\begin{equation}\label{eq:quali-bnd-6}
W_{i+1}^{\chi_{i+1}} \leq \frac{C}{(\sigma-\sigma^\prime)^{d+p}}\Big[(p-1+\eps)^{p+1}\boldsymbol{b}^{i} \mathbf{K}W_i^{\chi_i}\Big]^\kappa,
\end{equation}
where to lighten the notation we set
\[
\boldsymbol{b}:=4^{(d+sp)+(d+p)}>1 \quad \mbox{and} \quad \mathbf{K}:=\left(\frac{S}{R^{sp}}+1 \right)+\frac{S}{R^{sp+\frac{d(p-1)}{p-1+\eps}}}  \overline{\mathbf{T}}>1.
\]
Consequently, we iterate~\eqref{eq:quali-bnd-6} for $i=0,1,\ldots, k$ to obtain that
\begin{align}\label{eq:quali-bnd-7}
\left(\dashint_{Q_k}w^{\chi_k}\dxt \right)^{\nicefrac{1}{\chi_k}} \leq W_k &\leq \left[\frac{C}{(\sigma-\sigma^\prime)^{d+p}}\right]^{\nicefrac{1}{\chi_k}}\Big[(p-1+\eps)^{p+1}\boldsymbol{b}^{k-1} \mathbf{K}W_{k-1}^{\chi_{k-1}}\Big]^{\nicefrac{\kappa}{\chi_k}} \notag\\
& \,\, \,\vdots \notag\\
&\leq \left[\frac{C}{(\sigma-\sigma^\prime)^{d+p}}\right]^{\mathsf{S}(k)}(p-1+\eps)^{(p+1)\kappa \mathsf{S}(k)}\boldsymbol{b}^{\mathsf{T}(k)} \mathbf{K}^{\kappa \mathsf{S}(k)}\left(W_0^{\chi_0}\right)^{\nicefrac{\kappa^k}{\chi_k}},
\end{align}
where
\[
\mathsf{S}(k):=\sum_{i=0}^{k-1}\frac{\kappa^i}{\chi_k} \quad \mbox{and}\quad \mathsf{T}(k):=\sum_{i=0}^{k-1}i \frac{\kappa^{k-i}}{\chi_k}.
\]
Observe that
\[
\lim_{k \to \infty}\mathsf{S}(k)=\frac{1}{p(\kappa-1)} \quad \mbox{and} \quad \lim_{k \to \infty}\mathsf{T}(k)=\frac{\kappa}{p(\kappa-1)^2}.
\]
Thus, sending $k \to \infty$ in~\eqref{eq:quali-bnd-7} yields
\[
\sup_{Q_\infty} w \leq \frac{C_{\mathrm{prod}}}{(\sigma-\sigma^\prime)^{\frac{d+p}{p(\kappa-1)}}}\mathbf{K}^{\frac{\kappa}{p(\kappa-1)}}W_0
\]
with $C_{\mathrm{prod}}:=C^{\frac{1}{p(\kappa-1)} }(p-1+\eps)^{\frac{(p+1)\kappa}{p(\kappa-1)}}\boldsymbol{b}^{\frac{\kappa}{p(\kappa-1)^2}}$, namely that, for $\eps \in (1,\infty)$, 
\begin{equation}\label{eq:quali-bnd-8}
\sup_{\sigma^\prime Q_{R, S}} u_+ \leq \frac{C_{\mathrm{prod}}^{\frac{p}{p-1+\eps}}}{(\sigma-\sigma^\prime)^{\frac{d+p}{(\kappa-1)(p-1+\eps)}}} \mathbf{K}^{\frac{\kappa}{(\kappa-1)(p-1+\eps)}} \left(\dashint_{\sigma Q_{R,S}}\left[1+u_+^{p-1+\eps} \right]\dxt \right)^{\nicefrac{1}{(p-1+\eps)}}.
\end{equation}

\emph{Step 4.}\quad In this final step, we verify the estimate~\eqref{eq:quali-bnd-8} holds true for all $\eps \in (0,\infty)$. Let $\sigma_1,\sigma_2$ be such that $0<\sigma ^\prime \leq \sigma_1 <\sigma_2 \leq \sigma  <1$. Fixing $\eps \in (1,\infty)$ which satisfies~\eqref{eq:quali-bnd-8}, take $\eps^\prime \in (0,1]$ arbitrarily. By~\eqref{eq:quali-bnd-8} and Young's inequality we estimate that
\begin{align*}
\sup_{\sigma_1 Q_{R,S}} u_+ &\stackrel{\eqref{eq:quali-bnd-8}}{\leq} \frac{C_{\mathrm{prod}}^{\frac{p}{p-1+\eps}}}{(\sigma_2-\sigma_1)^{\frac{d+p}{(\kappa-1)(p-1+\eps)}}} \mathbf{K}^{\frac{\kappa}{(\kappa-1)(p-1+\eps)}} \left(\dashint_{\sigma_2 Q_{R,S}}\left[1+u_+ \right]^{p-1+\eps}\dxt \right)^{\nicefrac{1}{(p-1+\eps)}}\\
&\leq  
\left(\frac{C_{\mathrm{prod}}^p\mathbf{K}^{\frac{\kappa}{\kappa-1}}}{(\sigma_2-\sigma_1)^{\frac{d+p}{\kappa-1}}}\dashint_{\sigma_2 Q_{R,S}}\left[1+u_+ \right]^{p-1+\eps^\prime}\dxt \right)^{\nicefrac{1}{(p-1+\eps)}} \left(1+\sup_{\sigma_2 Q_{R,S}}u_+\right)^{\nicefrac{(\eps-\eps^\prime)}{(p-1+\eps)}} \\
&\leq \frac{1}{2}\sup_{\sigma_2 Q_{R,S}}u_+ + C\left(\frac{C_{\mathrm{prod}}^p\mathbf{K}^{\frac{\kappa}{\kappa-1}}}{(\sigma_2-\sigma_1)^{\frac{d+p}{\kappa-1}}}\dashint_{\sigma_2 Q_{R,S}}\left[1+u_+ \right]^{p-1+\eps^\prime}\dxt \right)^{\nicefrac{1}{(p-1+\eps^\prime)}}.
\end{align*}
The desired estimate~\eqref{eq:quali-bnd-8} for $\eps \in (0,1]$ follows from using a standard iteration argument (see for example~\cite[Lemma 6.1, Page 191]{Giusti}). The proof is complete.
\end{proof}

\begin{remark}
In the limit case $u \in \mathbf{PDG}_\pm^{s,p}(\Omega_T, \gamma_{\mathrm{DG}}, \infty)$ we can also deduce the qualitative local boundedness, namely that, there exists a constant $C(d,s,p,\gamma_{\mathrm{DG}})<\infty$ such that on all concentric cylinder $\sigma^\prime Q_{R,S}(z_0) \subset \sigma Q_{R,S}(z_0) \subset \Omega_T$ for $z_0=(x_0,t_0) \in \Omega_T$ and $0<\sigma^\prime < \sigma \leq 1$, we have
\begin{align*}
\sup_{\sigma^\prime Q_{R,S}(z_0)} u_\pm &\leq \frac{C}{(\sigma-\sigma^\prime)^{\nicefrac{(d+p)}{(\kappa-1)}}}  \left[\left(\frac{S}{R^{sp}}+1 \right)+\frac{S}{R^{sp}}\tail_\infty\right]^{\nicefrac{\kappa}{(\kappa-1)}} \left(\dashint_{\sigma Q_{R,S}(z_0)}\left[1+u_\pm^{p} \right]\dxt \right)^{\nicefrac{1}{p}},
\end{align*}
where  $\sigma Q_{R,S}(z_0):=Q_{\sigma R, \sigma S}(z_0)$ and
\[
\tail_\infty:=\sup_{t \in (t_0-\sigma S, t_0)} \tail \left(u_\pm(t)\,; B_{\sigma^\prime R}(x_0)\right).
\]
It is rather easy to check this assertion for $u \in \mathbf{PDG}_\pm^{s,p}(\Omega_T, \gamma_{\mathrm{DG}}, \infty)$. Indeed, in the proof of Proposition~\ref{Prop:qualitative bnd}, it is enough to skip using H\"{o}lder's inequality and slightly modify Moser's iteration argument.
\end{remark}

\subsection{Proof of Theorem~\ref{Thm:boundedness}}
We will prove~Theorem~\ref{Thm:boundedness} using a new time-dependent truncation method based on the linear case as in~\cite[Proposition 5.1]{Lia25}. The reader can also consult the argument of~\cite{KW23a} for the linear case.  

\begin{proof}[Proof of Theorem~\ref{Thm:boundedness}]
The proof is technically intricate and thus, for the sake of readability, we split it into four steps.  The initial three steps are meant to establish the (quantitative) $L^\infty$--$L^p$ local boundedness of $u \in \mathbf{PDG}_+^{s,p}(\Omega_T, \gamma_{\mathrm{DG}}, \eps)$, while the fourth is meant to refine it to a $L^\infty$--$L^\nu$ bound for any $ \nu \in (0,p)$.

\smallskip

\emph{Step 1.} \quad Let $z_0=(x_0,t_0)=(0,0)$, suppose $Q_{R, \theta} \subset \Omega_T$ and take $\rho \in (0,R)$. Fix $\sigma \in (0,1)$ and let $k \in [0,\infty)$ be a parameter to be specified later. We then introduce, for $i\in \N_0$, the following 
\[
\begin{cases}
k_i:=(1-2^{-i})k,\\[4pt]
\rho_i:=\sigma \rho +2^{-i}(1-\sigma)\rho, \quad \theta_i:=\sigma \theta+2^{-i}(1-\sigma)\theta,\\[4pt]
\widetilde{\rho}_i:=\frac{1}{2}(\rho_i+\rho_{i+1}), \quad \widehat{\rho}_i:=\frac{3}{4}\rho_i+\frac{1}{4}\rho_{i+1}, \quad \overline{\rho}_i:=\frac{1}{4}\rho_i+\frac{3}{4}\rho_{i+1}, \\[4pt]
\widetilde{\theta}_i:=\frac{1}{2}(\theta_i+\theta_{i+1}), \quad \,\widehat{\theta}_i:=\frac{3}{4}\theta_i+\frac{1}{4}\theta_{i+1}, \quad \,\overline{\theta}_i:=\frac{1}{4}\theta_i+\frac{3}{4}\theta_{i+1}, \\[4pt]
B_i:=B_{\rho_i}, \quad \widetilde{B}_i:=B_{\widetilde{\rho}_i}, \quad \widehat{B}_i:=B_{\widehat{\rho}_i}, \quad \overline{B}_i:=B_{\overline{\rho}_i}, \\[4pt]
Q_i:=B_i \times (-\theta_i,0], \quad \widetilde{Q}_i:=\widetilde{B}_i \times (-\widetilde{\theta}_i,0], \\[4pt]
\widehat{Q}_i:=\widehat{B}_i \times (-\widehat{\theta}_i, 0], \quad \overline{Q}_i:=\overline{B}_i \times (-\overline{\theta}_i, 0]. 
\end{cases}
\]
By construction it is clear that, for any $i \in \N_0$, 
\[
\left\{
\begin{array}{l}
0=k_0\leq k_i \leq k_{i+1} \uparrow k_\infty=k,  \\[2mm]
Q_0=Q_{\rho, \theta}, \quad Q_\infty=Q_{\sigma \rho, \sigma \theta}, \quad Q_{i+1} \subset \overline{Q}_i \subset \widetilde{Q}_i \subset \widehat{Q}_i \subset Q_i.
\end{array}
\right.
\]
We then select the cutoff function $\zeta \in C_c^\infty(Q_i ; [0,1])$ which vanishes outside $\widehat{Q}_i$ and satisfies
$\zeta\equiv 1$ on $\widetilde{Q}_i$, and
\[
|\nabla \zeta| \leq \frac{2^{i+4}}{(1-\sigma)\rho} \quad \mbox{and} \quad |\partial_t\zeta| \leq \frac{2^{i+4}}{(1-\sigma)\theta}.
\]
Set
\[
\ell(t):=\left(\widetilde{\gamma}\int_{-\theta}^t \int_{\R^d \setminus B_R}\frac{u_+^{p-1}(y,\tau)}{|y|^{d+sp}}\dytau\right)^{\nicefrac{1}{(p-1)}},
\]
where the constant $\widetilde{\gamma}>0$ is to be determined later. Set $M_0:=\sup_{Q_0} u_+$, which is finite thanks to Proposition~\ref{Prop:qualitative bnd}. In the energy formulation~\eqref{d2}, we choose now the time-dependent level $k(t)$ as
\[
k(t)=\left(\ell(t)^{p-1}+k_{i+1}^{p-1}\right)^{\nicefrac{1}{(p-1)}}+\frac{1}{2}M_0
\]
and adopt the shorthand notation
\[
w_+(x,t):= \left[u(x,t)-\left(\ell(t)^{p-1}+k_{i+1}^{p-1}\right)^{\nicefrac{1}{(p-1)}}-\frac{1}{2}M_0\right]_+.
\]
It is clear that $k(t) \geq \frac{1}{2}M_0$ and $k^\prime(t) \geq 0$. Under this choice, implementing~\eqref{d2} over concentric cylinders $\widetilde{Q}_i$ and $Q_i$ gives,
\begin{align}\label{eq:bnd-est-1}
&\sup_{t \in (-\widetilde{\theta}_i,0]}\int_{\widetilde{B}_i}w_+^p\d{x}+\int_{-\widetilde{\theta}_i}^0 \int_{\widetilde{B}_i}\int_{\widetilde{B}_i} \frac{|w_+(x,t)-w_+(y,t)|^p}{|x-y|^{d+sp}}\dxyt 
\notag\\[4pt]
& \quad \quad \leq \gamma_{\mathrm{DG}}\int_{Q_i}\left|\partial_t \zeta^p\right|w_+^p\dxt \notag\\[4pt]
&\quad \quad \quad +\gamma_{\mathrm{DG}}\int_{-\theta_i}^{0}\int_{B_i}\int_{B_i}\min\left\{w_+^p(x,t), w_+^p(y,t)\right\}\dfrac{\big|\zeta(x,t)-\zeta(y,t)\big|^p}{|x-y|^{d+sp}}\dxyt \notag\\[4pt]
&\quad \quad \quad +\gamma_{\mathrm{DG}}\int_{Q_i}\zeta^pw_+(x,t)\d{x} \left(\sup_{x\,\in\, \widehat{B}_i}\int_{\R^d \setminus B_i}\frac{w_+^{p-1}(y,t)}{|x-y|^{d+sp}}\d{y}\right)\d{t} \notag\\[4pt]
&\quad \quad  \quad -\gamma_{\mathrm{DG}}\int_{Q_i}\zeta^pk^\prime(t)\Big(|u|+|k(t)|\Big)^{p-2}w_+(x,t)\dxt\notag\\[4pt]
&\quad \quad =:  (\mathrm{I})+(\mathrm{II})+(\mathrm{III})+(\mathrm{IV})
\end{align}
with obvious meanings of $(\mathrm{I})$--$(\mathrm{IV})$. Here, on the left side, we have discarded the term including the integral over whole $\R^d$.

\smallskip

\emph{Step 2.} \quad With~\eqref{eq:bnd-est-1} at hand, we estimate four terms separately. As for $(\mathrm{I})$, a straightforward computation shows that
\[
(\mathrm{I}) \leq \frac{2^{i+4}}{(1-\sigma)\theta} \int_{Q_i}w_+^p\dxt.
\]
An estimation of $(\mathrm{II})$ is also standard. In fact, we can deduce that
\begin{align*}
(\mathrm{II}) &\leq \frac{2^{(i+4)p}}{(1-\sigma)^p\rho^p}\int_{-\theta_i}^{0}\int_{B_i}\int_{B_i}\dfrac{w_+^p(x,t)}{|x-y|^{d+p(s-1)}}\dxyt\\
&\leq \frac{c2^{ip}}{(1-\sigma)^p\rho^{sp}}\int_{Q_i}w_+^p\dxt
\end{align*}
for a constant $c\equiv c(d,s,p)<\infty$, where to obtain the last line we used fact that
\[
\int_{B_i}\frac{\d{y}}{|x-y|^{d+p(s-1)}} \leq \int_{B_{2\rho_i}(x)}\frac{\d{y}}{|x-y|^{d+p(s-1)}} \leq c(d,s,p)\rho^{(1-s)p}, \quad \forall x \in B_i.
\]
The third procedure is started by splitting $(\mathrm{III})$ into two integrals:
\begin{align*}
(\mathrm{III})_1&:=\gamma_{\mathrm{DG}}\int_{Q_i}\zeta^pw_+(x,t)\d{x} \left(\sup_{x\,\in\, \widehat{B}_i}\int_{B_R \setminus B_i}\frac{w_+^{p-1}(y,t)}{|x-y|^{d+sp}}\d{y}\right)\d{t},\\[4pt]
(\mathrm{III})_2&:=\gamma_{\mathrm{DG}}\int_{Q_i}\zeta^pw_+(x,t)\d{x} \left(\sup_{x\,\in\, \widehat{B}_i}\int_{\R^d \setminus B_R}\frac{w_+^{p-1}(y,t)}{|x-y|^{d+sp}}\d{y}\right)\d{t}.
\end{align*}
Observe that
\[
\frac{|y-x|}{|y|} \geq 1-\frac{\widehat{\rho}_i}{\rho_i}=\frac{\rho_i-\rho_{i+1}}{4\rho_i} \geq \frac{1-\sigma}{2^{i+3}}\quad \mbox{for} \quad |y| \geq \rho_i,\,|x| \leq \widehat{\rho}_i,
\]
whereas, if $|y| \geq R$ and $|x| \leq \widehat{\rho}_i$ then 
\[
\frac{|y-x|}{|y|}  \geq 1-\frac{\widehat{\rho}_i}{R} \geq 1-\frac{\rho}{R}
\]
holds. Thus, we can bound
\begin{align*}
(\mathrm{III}) &\leq (\mathrm{III})_1 +(\mathrm{III})_2\\
&\leq \gamma_{\mathrm{DG}} \int_{Q_i}\zeta^pw_+(x,t)\left[ \left(\frac{2^{i+2}}{1-\sigma}\right)^{d+sp} \int_{B_R \setminus B_i}\frac{w_+^{p-1}(y, t)}{|y|^{d+sp}}\d{y}\right]\dxt\\
& \quad \quad+\gamma_{\mathrm{DG}} \int_{Q_i}\zeta^pw_+(x,t) \left[ \left(\frac{R}{R-\rho}\right)^{d+sp} \int_{\R^d \setminus B_R}\frac{u_+^{p-1}(y, t)}{|y|^{d+sp}}\d{y}\right]\dxt,
\end{align*}
where, to obtain the last line, the second integral was estimated by $w_+\leq u_+$. 

Next, we turn our attention to $(\mathrm{IV})$. For this, we split the estimate of $(\mathrm{IV})$ into two cases: $p \geq 2$ and $p \in (1,2)$. In the first case $ p \geq 2$, by discarding the term $|u|$ and using the nonnegativity of $k^\prime(t)$, we estimate that
\begin{align*}
(\mathrm{IV}) &=-\gamma_{\mathrm{DG}}\int_{Q_i}\zeta^pk^\prime(t)\Big(|u|+|k(t)|\Big)^{p-2}w_+(x,t)\dxt\\
&\leq -\gamma_{\mathrm{DG}}\int_{Q_i}\zeta^pk^\prime(t)k(t)^{p-2}w_+(x,t)\dxt\\
&=-\frac{\gamma_{\mathrm{DG}}}{p-1}\int_{Q_i}\zeta^p \frac{\d{}}{\d{t}}k(t)^{p-1}\cdot w_+(x,t)\dxt.
\end{align*}
\noindent We next consider the case $p \in (1,2)$. Observe that, for $u >k(t)$, 
\[
|u|+|k(t)| \leq 2 M_{0} \quad \Longrightarrow \quad -\frac{1}{\Big(|u|+|k(t)|\Big)^{2-p}} \leq -\frac{1}{(2M_{0})^{2-p}}.
\]
This, together with the nonnegativity of $k^\prime(t)$ and $k(t) \geq M_0/2$, yields that
\begin{align*}
(\mathrm{IV}) 
&=-\frac{\gamma_{\mathrm{DG}}}{p-1}\int_{Q_i}\zeta^p \frac{\d{}}{\d{t}}k(t)^{p-1} \cdot \frac{k(t)^{2-p}}{\Big(|u|+|k(t)|\Big)^{2-p}}w_+(x,t)\dxt \\
&\leq -\frac{\gamma_{\mathrm{DG}}}{p-1}\int_{Q_i}\zeta^p \frac{\d{}}{\d{t}}k(t)^{p-1} \cdot \frac{(M_0/2)^{2-p}}{\big(2M_{0}\big)^{2-p}}w_+(x,t)\dxt\\
&=-\frac{\gamma_{\mathrm{DG}}}{4^{2-p}(p-1)}\int_{Q_i}\zeta^p \frac{\d{}}{\d{t}}k(t)^{p-1} \cdot w_+(x,t)\dxt.
\end{align*}
Consequently, we obtain, for $p \in (1,\infty)$, that
\[
(\mathrm{IV}) \leq -\frac{\gamma_{\mathrm{DG}}}{4^{(2-p)_+}(p-1)}\int_{Q_i}\zeta^p \frac{\d{}}{\d{t}}k(t)^{p-1}\cdot w_+(x,t)\dxt=:\mathbf{T}.
\]
The idea now is to make $(\mathrm{III})_2+\mathbf{T}$ vanish. For this, we select the free parameter $\widetilde{\gamma}$ appearing in the definition of $\ell(t)$, so that
\[
\widetilde{\gamma}=4^{(2-p)_+}(p-1)\left(\frac{R}{R-\rho}\right)^{d+sp}.
\]
This immediately implies that
\[
\frac{\d{}}{\d{t}}k(t)^{p-1} =4^{(2-p)_+}(p-1)\left(\frac{R}{R-\rho}\right)^{d+sp} \int_{\R^d \setminus B_R}\frac{u_+^{p-1}(y,t)}{|y|^{d+sp}}\d{y}.
\]
As a consequence, we obtain
\begin{align}\label{eq:bnd-est-2}
(\mathrm{III})+(\mathrm{IV})&\leq (\mathrm{III})_1+(\mathrm{III})_2+\mathbf{T}\notag\\
&\leq \gamma_{\mathrm{DG}} \frac{4^{(i+2)(d+sp)}C(d)}{[\sigma (1-\sigma)]^{d+sp}\rho^{sp}} \left(\frac{R}{\rho}\right)^d \left[\sup_{t \in (-\theta,0]}\dashint_{B_R}w_+^{p-1}(y,t)\d{y}\right] \notag\\
&\quad \quad \quad\quad \quad \quad\quad \quad\times \int_{Q_i} w_+(x,t)\dxt \notag\\
&\leq \gamma_{\mathrm{DG}} \frac{4^{(i+2)(d+sp)}C(d)}{[\sigma (1-\sigma)]^{d+sp}\rho^{sp}} \boldsymbol{\mathfrak{S}}^{p-1}\int_{Q_i} w_+(x,t)\dxt\,,
\end{align}
where
\[
\displaystyle \boldsymbol{\mathfrak{S}}:=\left[\left(\frac{R}{\rho}\right)^d \sup_{t \in (-\theta,0]}\dashint_{B_R}u_+^{p-1}(y,t)\d{y}\right]^{\nicefrac{1}{(p-1)}}.
\]
At this stage, we further estimate the last integral in the above display. To this end, let us denote, for $i \in \N_0$, 
\[
\widetilde{w}_+(x,t):=\left[u(x,t)-\left(\ell(t)^{p-1}+k_i^{p-1}\right)^{\nicefrac{1}{(p-1)}}-\frac{1}{2}M_0\right]_+
\]
and
\[
A_i:=\left\{u(x,t)-\frac{1}{2}M_0>\left(\ell(t)^{p-1}+k_{i+1}^{p-1}\right)^{\nicefrac{1}{(p-1)}} \right\} \cap Q_i.
\]
The following Chebyshev type estimate\footnote{
Using the algebraic inequalities as in Lemma~\ref{Lm:alg-est-2} and~\cite[Lemma 2.2]{BDLMBS25}, one can verify~\eqref{eq:bnd-est-3}. Indeed, observe that \begin{align*}\int_{Q_i}\widetilde{w}_+^p(x,t)\dxt &\geq \int_{A_i}\left[u(x,t)-\left(\ell(t)^{p-1}+k_i^{p-1}\right)^{\nicefrac{1}{(p-1)}}-\frac{1}{2}M_0\right]_+^p \dxt \notag\\
&\geq \int_{A_i}\underbrace{\left[\left(\ell(t)^{p-1}+k_{i+1}^{p-1}\right)^{\nicefrac{1}{(p-1)}}-\left(\ell(t)^{p-1}+k_{i}^{p-1}\right)^{\nicefrac{1}{(p-1)}}\right]^p}_{=:\mathsf{F}_p}\dxt,
\end{align*}
Under the constraint~\eqref{eq:k-condition-1}, one can estimate that
$
\mathsf{F}_p \geq \frac{1}{c(p)}\frac{k^p}{2^{(i+2)p(2 \vee p)}}
$
with $c(p)=1$ if $1<p<2$ and $c(p)=(p-1)^p2^{\frac{p+1}{p-1}(p-2)p}$ if $p \geq 2$. We omit the details.
}
\begin{equation}\label{eq:bnd-est-3}
|A_i| \leq c(p)\frac{2^{(i+2)p(2 \vee p)}}{k^p}\int_{Q_i}\widetilde{w}_+^p(x,t)\dxt
\end{equation}
holds true for some constant $c(p)<\infty$, provided that
\begin{equation}\label{eq:k-condition-1}
k \geq \left[C_\ast\left(\frac{R}{R-\rho}\right)^{d+sp} \int_{-\theta}^0 \int_{\R^d \setminus B_R}\frac{u_+^{p-1}(y,\tau)}{|y|^{d+sp}}\dytau \right]^{\nicefrac{1}{(p-1)}}=\ell(0)
\end{equation}
with $C_\ast:=4^{(2-p)_+}(p-1)$. By~\eqref{eq:bnd-est-3}, the fact that $w_+ \leq \widetilde{w}_+$ and H\"{o}lder's inequality, it holds 
\begin{align*}
\int_{Q_i}w_+(x,t)\dxt &\leq |A_i|^{1-\nicefrac{1}{p}}\left(\int_{Q_i}\widetilde{w}_+^p(x,t)\dxt\right)^{\nicefrac{1}{p}}\\
&\!\!\stackrel{\eqref{eq:bnd-est-3}}{\leq} C\frac{2^{(i+2)(p-1)(2 \vee p)}}{k^{p-1}} \int_{Q_i}\widetilde{w}_+^p\dxt.
\end{align*}
Matching this with~\eqref{eq:bnd-est-2}, yields
\[
(\mathrm{III})+(\mathrm{IV}) \leq C 
\frac{4^{(i+2)(d+sp+(p-1)(2 \vee p))}}{[\sigma (1-\sigma)]^{d+sp}\rho^{sp}} \left(\frac{\boldsymbol{\mathfrak{S}}}{k}\right)^{p-1}\int_{Q_i}\widetilde{w}_+^p\dxt,
\]
and therefore we have obtained the estimate
\begin{align}\label{eq:bnd-est-7}
&\sup_{t \in (-\widetilde{\theta}_i,0]}\int_{\widetilde{B}_i}w_+^p\d{x}+\int_{-\widetilde{\theta}_i}^0 \int_{\widetilde{B}_i}\int_{\widetilde{B}_i} \frac{|w_+(x,t)-w_+(y,t)|^p}{|x-y|^{d+sp}}\dxyt \notag\\
& \quad \quad \leq (\mathrm{I})+(\mathrm{II})+(\mathrm{III})+(\mathrm{IV})  \notag\\
& \quad \quad \leq C \frac{\boldsymbol{b}^i}{(1-\sigma)^{d+p}} \left[\frac{1}{\rho^{sp}}+\frac{1}{\theta}+\frac{1}{\sigma^{d+sp} \rho^{sp}}\left(\frac{\boldsymbol{\mathfrak{S}}}{k}\right)^{p-1}\right]\int_{Q_i}\widetilde{w}_+^p\dxt \notag\\
& \quad \quad \leq C \frac{\boldsymbol{b}^i}{(1-\sigma)^{d+p}\rho^{sp}} \left[\frac{\rho^{sp}}{\theta}+\frac{1}{\delta^{p-1}\sigma^{d+sp}}\right]\int_{Q_i}\widetilde{w}_+^p\dxt
\end{align}
for a constant $C(d,s,p,\gamma_{\mathrm{DG}})<\infty$ and $\boldsymbol{b}:=4^{d+sp+(p-1)(2 \vee p)}$. To obtain the last line we have enforced 
\begin{equation}\label{eq:k-condition-2}
k \geq \delta \boldsymbol{\mathfrak{S}},
\end{equation}
where $\delta \in (0,1]$ is to be specified later.

\smallskip

\emph{Step 3: Iteration.} Let $\varphi$ be a cutoff function defined in $Q_i$ such that
\[
\1_{Q_{i+1}} \leq \varphi \leq \1_{\widetilde{Q}_i}, \quad |\nabla \varphi| \leq 2^{i+4}/\rho, \quad \mathrm{supp}\,(\varphi) \subset \widetilde{Q}_i.
\]
We then apply Proposition~\ref{FS} with $\mathsf{d}=2^{-i-4}$ and the H\"{o}lder inequality to get
\begin{align*}
&\int_{Q_{i+1}}w_+^p\dxt  \leq \int_{\widetilde{Q}_i}(\varphi w_+)^p\dxt
 \\
& \quad \leq \left[\int_{\widetilde{Q}_i}(\varphi w_+)^{\kappa p}\dxt\right]^{\nicefrac{1}{\kappa}}|A_i|^{1-\nicefrac{1}{\kappa}} \\
& \quad \leq C \Bigg[\rho^{sp} \int_{-\widetilde{\theta}_i}^0\int_{\widetilde{B}_i}\int_{\widetilde{B}_i} \frac{\left|\varphi w_+(x,t)-\varphi w_+(y,t)\right|^p}{|x-y|^{d+sp}}\dxyt \\
&\quad \quad \quad \quad \quad +\frac{1}{\mathsf{d}^{d+sp}}\int_{\widetilde{Q}_i}(\varphi w_+)^p\dxt\Bigg]^{\nicefrac{1}{\kappa}}\left[\sup_{t \in (-\widetilde{\theta}_i,0]}\dashint_{\widetilde{B}_i}(\varphi w_+)^p\d{x}\right]^{\frac{\kappa_\ast-1}{\kappa \kappa_\ast}}|A_i|^{1-\nicefrac{1}{\kappa}},
\end{align*}
where $\kappa=1+\frac{\kappa_\ast-1}{\kappa_\ast}$ with $\kappa_\ast$ appearing in Proposition~\ref{FS}. Furthermore, as for the fractional integral on the right-hand side, by the triangle inequality, it holds that
\begin{align*}
\left|\varphi w_+(x,t)-\varphi w_+(y,t)\right|^p \lesssim_p \left|w_+(x,t)-w_+(y,t)\right|^p\varphi^p(x,t)+w_+^p(y,t)\left|\varphi (x,t)-\varphi(y,t)\right|^p,
\end{align*}
thereby getting 
\begin{align}\label{eq:bnd-est-7-add}
&\rho^{sp}\int_{-\widetilde{\theta}_i}^0\int_{\widetilde{B}_i}\int_{\widetilde{B}_i} \frac{\left|\varphi w_+(x,t)-\varphi w_+(y,t)\right|^p}{|x-y|^{d+sp}}\dxyt \notag \\
& \quad \lesssim_p  \quad \rho^{sp}\int_{-\widetilde{\theta}_i}^0\int_{\widetilde{B}_i}\int_{\widetilde{B}_i} \frac{\left|w_+(x,t)-w_+(y,t)\right|^p}{|x-y|^{d+sp}}\dxyt \notag \\
& \quad \quad \quad \quad + \rho^{sp} \left(\frac{2^{i+4}}{\rho}\right)^p\int_{-\widetilde{\theta}_i}^0\int_{\widetilde{B}_i}w_+^p(y,t)\d{y}\left(\int_{\widetilde{B}_i}\frac{\d{x}}{|x-y|^{d+sp}}\right)\d{t} \notag \\
& \quad \lesssim_{d,s,p} \quad \rho^{sp} \int_{-\widetilde{\theta}_i}^0\int_{\widetilde{B}_i}\int_{\widetilde{B}_i} \frac{\left|w_+(x,t)-w_+(y,t)\right|^p}{|x-y|^{d+sp}}\dxyt \notag \\
& \quad \quad \quad  \quad \quad +2^{(i+4)p}\int_{\widetilde{Q}_i}\widetilde{w}_+^p(y,t)\dyt \notag \\
& \quad \!\!\!\stackrel{\eqref{eq:bnd-est-7}}{\leq} C \frac{\boldsymbol{b}^i}{(1-\sigma)^{d+p}} \left[\frac{\rho^{sp}}{\theta}+\frac{1}{\delta^{p-1}\sigma^{d+sp}}\right]\int_{Q_i}\widetilde{w}_+^p\dxt,
\end{align}
where $C\equiv C(\data)<\infty$; in the penultimate line we have used a simple estimate
\[
\int_{\widetilde{B}_i}\frac{\d{x}}{|x-y|^{d+sp}} \leq \int_{B_{2\widetilde{\rho}_i}(y)}\frac{\d{x}}{|x-y|^{d+sp}} \leq c(d,s,p)\rho^{(1-s)p} \quad \forall x \in \widetilde{B}_i.
\]
Combining the previous estimates with~\eqref{eq:bnd-est-7} and using the identity
$\frac{1}{\kappa}+\frac{\kappa_\ast-1}{\kappa\kappa_\ast}=\frac{2\kappa_\ast-1}{\kappa \kappa_\ast}
$, we obtain
\begin{align*}
&\int_{Q_{i+1}}w_+^p\dxt  \\
&\quad \quad \leq \frac{\boldsymbol{\gamma} \widetilde{\boldsymbol{b}}^i \left(\frac{\rho^{sp}}{\theta}+\frac{1}{\delta^{p-1}\sigma^{d+sp}}\right)^{\frac{2\kappa_\ast-1}{\kappa \kappa_\ast}}}{(1-\sigma)^{(d+p)\frac{2\kappa_\ast-1}{\kappa \kappa_\ast}}}\rho^{-(d+sp)\frac{\kappa_\ast-1}{\kappa \kappa_\ast}}k^{-p(1-\frac{1}{\kappa})} \left(\int_{Q_i} \widetilde{w}_+^p\dxt \right)^{1+\frac{\kappa_\ast-1}{\kappa \kappa_\ast}}
\end{align*}
with $\boldsymbol{\gamma} \equiv \boldsymbol{\gamma}(d,s,p,\gamma_{\mathrm{DG}})>1$ and $\widetilde{\boldsymbol{b}}:=2^{d\frac{\kappa_\ast-1}{\kappa \kappa_\ast}}\boldsymbol{b}>1$. 
Now, define
\[
\boldsymbol{Y}_{\!i}:=\int_{Q_i}\left[u(x,t)-\left(\ell(t)^{p-1}+k_i^{p-1}\right)^{\nicefrac{1}{(p-1)}}-\frac{1}{2}M_0\right]_+^p\dxt.
\]
Hence, the above recursive inequality gives, for every $i\in \N_0$, that
\[
\boldsymbol{Y}_{\!\! i+1} \leq \frac{\boldsymbol{\gamma} \widetilde{\boldsymbol{b}}^i \left(\frac{\rho^{sp}}{\theta}+\frac{1}{\delta^{p-1}\sigma^{d+sp}}\right)^{\frac{2\kappa_\ast-1}{\kappa \kappa_\ast}}}{(1-\sigma)^{(d+p)\frac{2\kappa_\ast-1}{\kappa \kappa_\ast}}}\cdot \frac{\boldsymbol{Y}_{\!\!i}^{1+\frac{\kappa_\ast-1}{\kappa \kappa_\ast}}}{(\rho^{d+sp}k^p)^{\frac{\kappa_\ast-1}{\kappa \kappa_\ast}}}.
\]
Thanks to the fast geometric convergence (Lemma~\ref{Lm:FGC}), we can conclude
\begin{equation}\label{eq:bnd-est-8}
\lim_{i \to \infty}\boldsymbol{Y}_{\!\!i}=\lim_{i \to \infty} \int_{Q_i}\left[u(x,t)-\left(\ell(t)^{p-1}+k_i^{p-1}\right)^{\nicefrac{1}{(p-1)}}-\frac{1}{2}M_0\right]_+^p\dxt=0
\end{equation}
provided that
\begin{align*}
\boldsymbol{Y}_{\!0}&=\int_{Q_0}\left(u-\ell(t)-\frac{1}{2}M_0\right)_+^p\dxt \\
&\leq \left[ \frac{\boldsymbol{\gamma} \left(\frac{\rho^{sp}}{\theta}+\frac{1}{\delta^{p-1}\sigma^{d+sp}}\right)^{\frac{2\kappa_\ast-1}{\kappa \kappa_\ast}}}{(1-\sigma)^{(d+p)\frac{2\kappa_\ast-1}{\kappa \kappa_\ast}}(\rho^{d+sp}k^p)^{\frac{\kappa_\ast-1}{\kappa \kappa_\ast}}}\right]^{-1/\left(\frac{\kappa_\ast-1}{\kappa\kappa_\ast}\right)}\widetilde{\boldsymbol{b}}^{-1/\left(\frac{\kappa_\ast-1}{\kappa\kappa_\ast}\right)^2},
\end{align*}
namely that, by denoting $q=\frac{2\kappa_\ast-1}{p(\kappa_\ast-1)}$,
\begin{align}\label{eq:k-condition-3}
 k &\geq \frac{\widetilde{\boldsymbol{\gamma}}}{(1-\sigma)^{(d+p)q}}\left(\frac{\rho^{sp}}{\theta}+\frac{1}{\delta^{p-1}\sigma^{d+sp}}\right)^q\left(\frac{\theta}{\rho^{sp}}\right)^{\nicefrac{1}{p}}\left[\dashint_{Q_{\rho,\theta}}\left(u-\ell(t)-\frac{1}{2}M_0\right)_+^p\dxt \right]^{\nicefrac{1}{p}}
\end{align}
with a new constant $\widetilde{\boldsymbol{\gamma}} \equiv \widetilde{\boldsymbol{\gamma}}(d,s,p,\gamma_{\mathrm{DG}})<\infty$; in the above estimate, the quantity $\widetilde{\boldsymbol{b}}$ was incorporated in $\widetilde{\boldsymbol{\gamma}}$ because $\widetilde{\boldsymbol{b}}>1$ depends upon $d,s$ and $p$, and to keep the notation short, we define
\[
\boldsymbol{\cA}_\delta \equiv \boldsymbol{\cA}_\delta\left(\sigma,\rho,R,\theta\right):=\left(\frac{\rho^{sp}}{\theta}+\frac{1}{\delta^{p-1}\sigma^{d+sp}}\right)^q\left(\frac{\theta}{\rho^{sp}}\right)^{\nicefrac{1}{p}}.
\]
Note that the function $\boldsymbol{\cA}_\delta$ depends on $\delta$ as well, but is independent of the variable $R$. Now, select $k$ such that
\begin{align*}
k&=\ell(0)+\delta \boldsymbol{\mathfrak{S}}+\frac{\widetilde{\boldsymbol{\gamma}}}{(1-\sigma)^{(d+p)q}}\boldsymbol{\cA}_\delta\left(\dashint_{Q_{\rho,\theta}}u_+^p\dxt \right)^{\nicefrac{1}{p}},
\end{align*}
which is in accordance with three preceding conditions~\eqref{eq:k-condition-1},~\eqref{eq:k-condition-2} and~\eqref{eq:k-condition-3}, Lemma~\ref{Lm:alg-est-2} with $\alpha=1/(p-1)$ then yields, using also that $\ell(t) \leq \ell (0)$ and~\eqref{eq:bnd-est-8}, 
\begin{align*}
\sup_{Q_{\sigma \rho, \sigma \theta}} u_+ &\leq \frac{1}{2}M_0+2^{\frac{(2-p)_+}{p-1}} \left(\ell(t)+k\right) \notag\\
&\leq \frac{1}{2}\sup_{Q_{\rho, \theta}} u_+ +c(p) \delta \boldsymbol{\mathfrak{S}} +c(p)\left[\left(\frac{R}{R-\rho}\right)^{d+sp} \int_{-\theta}^0 \int_{\R^d \setminus B_R}\frac{u_+^{p-1}}{|x|^{d+sp}}\dxt \right]^{\nicefrac{1}{(p-1)}} \notag\\
&  \quad \quad \quad \quad \quad +c(p)\frac{\widetilde{\boldsymbol{\gamma}}}{(1-\sigma)^{(d+p)q}}\boldsymbol{\cA}_\delta\left(\dashint_{Q_{\rho,\theta}}u_+^p\dxt \right)^{\nicefrac{1}{p}} \notag\\
&\leq \frac{1}{2}\sup_{Q_{\rho, \theta}} u_+ +\frac{c(p)}{(1-\sigma)^{(d+p)q}} \mathsf{Q},
\end{align*}
where
\begin{align*}
\mathsf{Q}:=\delta \boldsymbol{\mathfrak{S}}&+\left[\left(\frac{R}{R-\rho}\right)^{d+sp} \int_{-\theta}^0 \int_{\R^d \setminus B_R}\frac{u_+^{p-1}}{|x|^{d+sp}}\dxt \right]^{\nicefrac{1}{(p-1)}}+\boldsymbol{\cA}_\delta\left(\dashint_{Q_{\rho,\theta}}u_+^p\dxt \right)^{\nicefrac{1}{p}}.
\end{align*}
Appealing to this over cylinders $Q_{\sigma   \rho, \sigma  \theta} \subset Q_{\widetilde{\sigma} \rho, \widetilde{\sigma} \theta}$ with $\nicefrac{1}{2} \leq \sigma < \widetilde{\sigma} \leq 1$, we deduce that
\[
\sup_{Q_{\sigma  \rho, \sigma \theta}} u_+ \leq \frac{1}{2}\sup_{Q_{\widetilde{\theta}\rho, \widetilde{\sigma} \theta}} u_+ +\frac{c(p)}{(\widetilde{\sigma}-\sigma)^{(d+p)q}} \mathsf{Q}.
\]
Again, a standard iteration argument (found, for example, in~\cite[Lemma 6.1, Page 191]{Giusti}) yields, 
\begin{equation}\label{eq:bnd-est-eq9}
\sup_{Q_{\sigma \rho, \sigma \theta}} u_+ \leq \frac{C}{(1-\sigma)^{(d+p)q}} \mathsf{Q}
\end{equation}
for a constant $C\equiv C(p,q)<\infty$.

\smallskip

\emph{Step 4: Interpolation.} We refine estimate~\eqref{eq:bnd-est-eq9}. For fixed $\nu \in (0,p)$, Young's inequality implies
\begin{align*}
&\frac{\widetilde{\boldsymbol{\gamma}}}{(1-\sigma)^{(d+p)q}}\boldsymbol{\cA}_\delta\left(\dashint_{Q_{\rho,\theta}}u_+^p\dxt \right)^{\nicefrac{1}{p}} \\
& \quad \leq \widetilde{\boldsymbol{\gamma}}\boldsymbol{\cA}_\delta\left[\frac{1}{(1-\sigma)^{(d+p)qp}}\dashint_{Q_{\rho,\theta}}u_+^\nu\dxt \right]^{\nicefrac{1}{p}} \left(\sup_{Q_{R,\theta}}u_+\right)^{\nicefrac{(p-\nu)}{p}}\\
&\quad \leq \delta \sup_{Q_{R,\theta}}u_++\boldsymbol{\gamma}^{\frac{p}{\nu}}\delta^{-\frac{p-\nu}{\nu}}\boldsymbol{\cA}^{\frac{p}{\nu}}_\delta\left[\frac{1}{(1-\sigma)^{(d+p)qp}}\dashint_{Q_{\rho,\theta}}u_+^\nu\dxt \right]^{\nicefrac{1}{\nu}}\,.
\end{align*}
Inserting this into~\eqref{eq:bnd-est-eq9} with $\sigma$ replaced by $\nicefrac{(1+\sigma)}{2}$ and rearranging, we deduce that
\begin{align}\label{eq:bnd-est-eq10}
\sup_{Q_{\frac{1+\sigma}{2} \rho, \frac{1+\sigma}{2} \theta}}u_+
&\leq \frac{C}{(1-\sigma)^{(d+p)q}}\delta\left[1+\left(\frac{R}{\rho}\right)^{\frac{d}{p-1}}\right]\sup_{Q_{R,\theta}} u_+ \notag\\
&\quad \quad   +\frac{C}{(1-\sigma)^{(d+p)q}}\left[\left(\frac{R}{R-\rho}\right)^{d+sp} \int_{-\theta}^0 \int_{\R^d \setminus B_R}\frac{u_+^{p-1}}{|x|^{d+sp}}\dxt \right]^{\nicefrac{1}{(p-1)}} \notag\\
&\quad \quad  +C\boldsymbol{\gamma}^{\frac{p}{\nu}}\delta^{-\frac{p-\nu}{\nu}}\boldsymbol{\cA}^{\frac{p}{\nu}}_\delta\left[\frac{1}{(1-\sigma)^{(d+p)qp}}\dashint_{Q_{\rho,\theta}}u_+^\nu\dxt \right]^{\nicefrac{1}{\nu}}
\end{align}
whenever $Q_{R,\theta} \subset \Omega_T$, $\sigma \in (0,1)$ and $\rho \in (0,R)$. Constant $C$ depends only on $d,s,p$ since $q$ does. At this stage, we will perform an interpolation argument. For this, we introduce, for $i \in \N_0$,
\[
\left\{
\begin{array}{l}
\rho_i:=\rho-2^{-(i+1)}(1-\sigma)\rho, \quad \widetilde{\rho}_i:=\frac{1}{2}(\rho_i+\rho_{i+1}) \\[2mm]
\theta_i:=\theta-2^{-(i+1)}(1-\sigma)\theta,\quad \widetilde{\theta}_i:=\frac{1}{2}(\theta_i+\theta_{i+1})
\end{array}
\right.
\]
and, subsequently, define $\{\sigma_i\}_{i \in \N_0}$ inductively as $\sigma_i:=\rho_i/\widetilde{\rho}_i=\theta_i/\widetilde{\theta}_i$. By construction it holds
\[
\left\{
\begin{array}{l}
\frac{1+\sigma}{2} \rho=\rho_0 \leq \cdots \leq \rho_i \leq \rho_{i+1} \uparrow  \rho_\infty=\rho,\\[2mm]
\frac{1+\sigma}{2} \theta=\theta_0 \leq \cdots \leq \theta_i \leq \theta_{i+1} \uparrow  \theta_\infty=\theta.
\end{array}
\right.
\]
We temporary abused the letter $\sigma \in (0,1)$ as $\sigma_i$ will play the role of $\sigma$ appearing in~\eqref{eq:bnd-est-eq10}. Under these preliminary tools, we apply~\eqref{eq:bnd-est-eq10} with $(\sigma, \rho, R, \theta)$ replaced by $(\sigma_i, \widetilde{\rho}_i, \rho_{i+1}, \theta_{i+1})$ and notice that $\theta_i\leq \sigma_i \theta_{i+1}$. We therefore get, for all $i \in \N_0$, that
\begin{align*}
\sup_{Q_{\rho_i, \theta_i}}u_+ &\leq \frac{C}{(1-\sigma_i)^{(d+p)q}}\delta\left[1+\left(\frac{\rho_{i+1}}{\rho_i}\right)^{\frac{d}{p-1}}\right]\sup_{Q_{\rho_{i+1},\theta_{i+1}}} u_+ \notag\\
&\quad \quad  +\frac{C}{(1-\sigma_i)^{(d+p)q}}\left[\left(\frac{\rho_{i+1}}{\rho_{i+1}-\widetilde{\rho}_i}\right)^{d+sp} \int_{-\theta_{i+1}}^0 \int_{\R^d \setminus B_{\rho_{i+1}}}\frac{u_+^{p-1}}{|x|^{d+sp}}\dxt \right]^{\nicefrac{1}{(p-1)}} \notag\\
&\quad \quad  +C\boldsymbol{\gamma}^{\frac{p}{\nu}}\delta^{-\frac{p-\nu}{\nu}}\left[\frac{\boldsymbol{\cA}^{p}_{\delta,i}}{(1-\sigma_i)^{(d+p)qp}\widetilde{\rho}_i^d\theta_{i+1}}\int_{Q_{\widetilde{\rho}_i,\theta_{i+1}}}u_+^\nu\dxt \right]^{\nicefrac{1}{\nu}}\\
&=:(\mathrm{V})+(\mathrm{VI})+(\mathrm{VII}).
\end{align*}
Here we abbreviated
\[
\boldsymbol{\cA}^{p}_{\delta,i} \equiv \boldsymbol{\cA}^{p}_{\delta,i}\left(\sigma_i, \widetilde{\rho}_i, \rho_{i+1},\theta_{i+1}\right):= \left(\frac{\widetilde{\rho}_i^{sp}}{\theta_{i+1}}+\frac{1}{\delta^{p-1}\sigma_i^{d+sp}}\right)^{pq} \left(\frac{\theta_{i+1}}{\widetilde{\rho}_i^{sp}}\right).
\]
Here notice, by $\sigma_i \geq \sigma$, that
\begin{align}\label{eq:bnd-est-eq11}
\boldsymbol{\cA}^{p}_{\delta,i} &\leq  \left(\frac{\rho^{sp}}{\frac{1+\sigma}{2}\theta}+\frac{1}{\delta^{p-1}\sigma^{d+sp}}\right)^{pq} \left[\frac{\theta}{(\frac{1+\sigma}{2}\rho)^{sp}}\right] \notag\\
&\leq 2^{p(q+s)} \left(\frac{\rho^{sp}}{\theta}+\frac{1}{\delta^{p-1}\sigma^{d+sp}}\right)^{pq} \left(\frac{\theta}{\rho^{sp}}\right)^{}=2^{p(q+s)}\boldsymbol{\cA}^{p}_{\delta}.
\end{align}
We are going to estimate three integrals $(\mathrm{V})$--$(\mathrm{VII})$.  by observation that
\[
1-\sigma_i=1-\frac{\rho_i}{\widetilde{\rho}_i} =\frac{\rho_{i+1}-\rho_i}{\rho_{i+1}+\rho_i} \geq \frac{1-\sigma}{2^{i+2}},
\]
$(\mathrm{V})$ can be estimated, using $\rho_{i+1}/\widetilde{\rho}_i \leq 2$, 
\begin{align*}
(\mathrm{V})\leq C \frac{2^{i(d+p)q}}{(1-\sigma)^{(d+p)q}}\delta\sup_{Q_{\rho_{i+1},\theta_{i+1}}} u_+,
\end{align*}
whereas, it is straightforward to check that
\begin{align*}
(\mathrm{VI}) &\leq C \left(\frac{2^{i+2}}{1-\sigma}\right)^{(d+p)q}\left[\left(\frac{2^{i+2}}{1-\sigma}\right)^{d+sp} \int_{-\theta}^0\int_{\R^d \setminus B_{\sigma \rho}}\frac{u_+^{p-1}}{|x|^{d+sp}}\dxt \right]^{\nicefrac{1}{(p-1)}} \\
&\leq C \frac{2^{i \left((d+p)q+\frac{d+sp}{p-1}\right)} }{(1-\sigma )^{(d+p)q+\frac{d+sp}{p-1}}}\left[ \frac{\theta}{(\sigma \rho)^{sp}}\dashint_{-\theta}^0\Big[\tail (u_+(t) ; B_{\sigma \rho})\Big]^{p-1}\d{t} \right]^{\nicefrac{1}{(p-1)}}.
\end{align*}
Analogously, the denominator of the bracket in $(\mathrm{VII})$ is estimated as
\begin{align*}
(1-\sigma_i)^{\frac{(d+p)q}{p}}\widetilde{\rho}_i^d\theta_{i+1} &\geq \left(\frac{1-\sigma}{2^{i+2}}\right)^{\frac{(d+p)q}{p}} \left(\frac{1+\sigma}{2}\right)^{d+1}\rho^d\theta \\
&\geq \left(\frac{1-\sigma}{2^{i+2}}\right)^{\frac{(d+p)q}{p}} \frac{\rho^d\theta}{2^{d+1}}.
\end{align*}
This estimate together with~\eqref{eq:bnd-est-eq11} provides that
\begin{align*}
(\mathrm{VII}) &\stackrel{\eqref{eq:bnd-est-eq11}}{\leq} C\widetilde{\boldsymbol{\gamma}}^{\frac{p}{\nu}}\delta^{-\frac{p-\nu}{\nu}}2^{\frac{p}{\nu}(q+s)}\left[\frac{\boldsymbol{\cA}^{p}_{\delta}}{\left(\frac{1-\sigma}{2^{i+2}}\right)^{(d+p)qp} \frac{\rho^d\theta}{2^{d+1}}}\int_{Q_{\rho,\theta}}u_+^\nu\dxt \right]^{\nicefrac{1}{\nu}}\\
& \,\, \,\leq  C\widetilde{\boldsymbol{\gamma}}^{\frac{p}{\nu}}\delta^{-\frac{p-\nu}{\nu}}2^{\frac{(q+1)p}{\nu}\left[(d+p)(i+2)+d+2\right]}\left[\frac{\boldsymbol{\cA}^{p}_{\delta}}{(1-\sigma)^{(d+p)qp}}\dashint_{Q_{\rho,\theta}}u_+^\nu\dxt \right]^{\nicefrac{1}{\nu}}\,.
\end{align*}
Finally, after merging all these estimates $(\mathrm{V})$--$(\mathrm{VII})$ with~\eqref{eq:bnd-est-eq10}, denoting
\begin{align*}
\widetilde{\boldsymbol{b}_\nu}&:=2^{\left((d+p)q+\frac{d+sp}{p-1} \right)\vee  \left(\frac{(q+1)(d+p)p}{\nu}\right)},\\[4pt]
\boldsymbol{\cB}_\delta&:=\frac{C}{(1-\sigma)^{(d+p)q+\frac{d+sp}{p-1}}}\left[ \frac{\theta}{(\sigma \rho)^{sp}}\dashint_{-\theta}^0 \Big[\tail (u_+(t) ; B_{\sigma\rho})\Big]^{p-1}\d{t} \right]^{\nicefrac{1}{(p-1)}} \\
&\quad \quad \quad \quad +C_\nu\delta^{-\frac{p-\nu}{\nu}}\left[\frac{\boldsymbol{\cA}^{p}_{\delta}}{(1-\sigma)^{(d+p)qp}}\dashint_{Q_{\rho,\theta}}u_+^\nu\dxt \right]^{\nicefrac{1}{\nu}},
\end{align*}
which eventually leads to
\[
\sup_{Q_{\rho_i, \theta_i}} u_+ \leq C\delta \sup_{Q_{\rho_{i+1}, \theta_{i+1}}}u_++\boldsymbol{\cB}_\delta\widetilde{\boldsymbol{b}_\nu}^i, \quad \forall i \in \N_0.
\]
Clearly, as seen in the definition, the quantity $\boldsymbol{\cB}_\delta$ depends on $\delta$, but is independent of $i \in \N_0$. Iterating this recursive inequalities over  $i \in \{0,1,\ldots, j-1\}$ we end up with
\[
\sup_{Q_{\rho_0, \theta_0}} u_+ \leq \left(C\delta \right)^j \sup_{Q_{\rho_j, \theta_j}}u_++\boldsymbol{\cB}_\delta\sum_{i=0}^{j-1}\left(C \delta \widetilde{\boldsymbol{b}_\nu}\right)^i.
\]
Finally, select $\delta \in (0,1]$ so that
\[
C\delta \widetilde{\boldsymbol{b}_\nu}=\nicefrac{1}{2}  \quad \iff \quad \delta=\nicefrac{1}{(2C\widetilde{\boldsymbol{b}_\nu})} ,
\]
and take the limit $j \to \infty$ in the above display. This gives that
\[
\sup_{Q_{\sigma \rho, \sigma \theta}}u_+ \leq \sup_{Q_{\frac{1+\sigma}{2} \rho, \frac{1+\sigma}{2}\theta}}u_+ \leq 2\boldsymbol{\cB}_\delta=2\boldsymbol{\cB}_{\nicefrac{1}{(2C\widetilde{\boldsymbol{b}}_\nu)}}.
\]
Therefore, the proof is finally complete. 
\end{proof}

\section{Several expansions of positivity}\label{Sect.4}
This section is devoted to establishing the fundamental tools of our regularity theory: Critical Mass Lemmata, Measure-Propagation and a quantitative \emph{Expansion of Positivity}. Unlike the local case, the non-locality of the energy class calls for a careful tracking of the tail effect.
\smallskip

Let $k$ be a positive number and let $k(t)=k$ be time-independent, and hence $k^\prime(t)=0$. By choosing a proper cutoff function $\zeta$ (see for instance~\cite[Appendix B]{Nak23}), we can obtain from the definition~\eqref{d2} the following Caccioppoli type estimate, that encodes all the energetic information needed for this section. 
\begin{align}\label{d3}
&\sup\limits_{t_0-\tau_2<t<t_0}\int_{B_{r_1}(x_0)}w_\pm^p(x,t)\d{x}\notag\\[4pt]
&\quad \quad+\int_{t_o-\tau_2}^{t_0}\int_{B_{r_1}(x_0)}\int_{B_{r_1}(x_0)}\dfrac{\big|w_{\pm}(x,t)-w_{\pm}(y,t)\big|^p}{|x-y|^{d+sp}}\dxyt \notag \\[4pt]
&\quad \quad+\int_{Q_{r_1,\tau_1}(z_0)}w_\pm(x,t)\left(\int_{\R^d}\dfrac{w_\mp^{p-1}(y,t)}{|x-y|^{d+sp}}\d{y}\right)\dxt \notag \\[4pt]
&\quad \leq \int_{B_{r_1}(x_0)}w_\pm^p(x, t_0-\tau_2)\d{x}+\gamma_{\mathrm{DG}}\frac{r_2^{(1-s)p}}{(r_2-r_1)^p}\int_{Q_{r_2,\tau_2}(z_0)}w_\pm^p\dxt \notag \\[4pt]
&\quad \quad +\gamma_{\mathrm{DG}} \frac{r_2^{d}}{(r_2-r_1)^{d+sp}}\int_{Q_{r_2,\tau_2}(z_0)}w_{\pm}(x,t)\d{x} \Big[\tail (w_\pm(t)\,; B_{r_2}(x_0) )\Big]^{p-1}\d{t},
\end{align}
and
\begin{align}\label{d3'}
&\sup\limits_{t_0-\tau_2<t<t_0}\int_{B_{r_1}(x_0)}w_\pm^p(x,t)\d{x}\notag\\[4pt]
&\quad \quad+\int_{t_0-\tau_2}^{t_0}\int_{B_{r_1}(x_0)}\int_{B_{r_1}(x_0)}\dfrac{\big|w_{\pm}(x,t)-w_{\pm}(y,t)\big|^p}{|x-y|^{d+sp}}\dxyt \notag \\[4pt]
&\quad \quad+\int_{Q_{r_1,\tau_1}(z_0)}w_\pm(x,t)\left(\int_{\R^d}\dfrac{w_\mp^{p-1}(y,t)}{|x-y|^{d+sp}}\d{y}\right)\dxt \notag \\[4pt]
&\quad \leq \gamma_{\mathrm{DG}}\left[\frac{r_2^{(1-s)p}}{(r_2-r_1)^p} +\frac{1}{\tau_2-\tau_1}\right]\int_{Q_{r_2,\tau_2}(z_0)}w_\pm^p\dxt \notag \\[4pt]
&\quad \quad +\gamma_{\mathrm{DG}} \frac{r_2^{d}}{(r_2-r_1)^{d+sp}}\int_{Q_{r_2,\tau_2}(z_0)}w_{\pm}(x,t)\d{x} \Big[\tail (w_\pm(t)\,; B_{r_2}(x_0) )\Big]^{p-1}\d{t},
\end{align}
for any $Q_{r_1,\tau_1} (z_0)\subset Q_{r_2,\tau_2}(z_0) \subset \cQ_R$, where, as usual, $w_\pm:=(u-k)_\pm$ for short.

\subsection{De Giorgi-type Lemma}\label{Sect.4.1}
Here we establish a De Giorgi-type lemma, in a similar fashion to \cite{DiB93}, also called in some other contexts "Critical mass" Lemma.

\begin{lemma}[De Giorgi type lemma]\label{Lm:DGtype}
Let $p>1,s \in (0,1)$. For $z_0=(x_0,t_0) \in \Omega_T$, suppose that $Q_{2\rho,  2\tau }(z_0) \subset \mathcal{Q}_{R}\Subset \Omega_T$, where $\tau:=\delta \rho^{sp}$ with an arbitrary number $\delta \in (0,1]$. Let $u \in \mathbf{PDG}_-^{s,p}(\Omega_T,\gamma_{\mathrm{DG}}, \eps)$, in the sense of Definition~\ref{def of DG}, satisfy $u \geq 0$ in $\mathcal{Q}_R$. Then there exists $\nu \in (0,1)$, depending only on $\data$ such that, if
\[
\left(\frac{\rho}{R}\right)^{\frac{sp\eps}{(p-1)(p-1+\eps)}} \left(\dashint_{I_R(t_0)}\Big[\tail(u_-(t)\,; B_R(x_0))\Big]^{p-1+\eps}\d{t}\right)^{\nicefrac{1}{(p-1+\eps)}} \leq \frac{k}{2}
\]
and
\[
\left|\{u<k\} \cap Q_{2\rho,  2\tau }(z_0)\right| \leq \nu \left|Q_{2\rho,  2\tau }(z_0)\right|
\]
then 
\[
u \geq \frac{k}{2} \quad \textrm{a.e.\,\,in}\,\, \,Q_{\rho, \tau }(z_0).
\]
We can track $\nu=\nu_0 \delta^q$ for some $\nu_0 \in (0,1)$ and $q>1$, both depending only on $\data$.
\end{lemma}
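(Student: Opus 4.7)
My plan is to carry out a standard De Giorgi iteration on the Caccioppoli inequality~\eqref{d3}, using the constant truncation level $k(t)\equiv k_j$ so that the last term of~\eqref{d2} involving $\tfrac{d}{dt}(|k|^{p-2}k)$ vanishes identically. Introduce the shrinking sequences
\begin{equation*}
\rho_j=\rho(1+2^{-j}),\quad \tau_j=\tau(1+2^{-j}),\quad k_j=\tfrac{k}{2}(1+2^{-j}),\quad Q_j=B_{\rho_j}(x_o)\times(t_o-\tau_j,t_o],
\end{equation*}
so that $Q_0=Q_{2\rho,2\tau}(z_o)$, $Q_\infty=Q_{\rho,\tau}(z_o)$ and $k_j\downarrow k/2$. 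Set $w_j:=(u-k_j)_-$ and track the measure quantity $\boldsymbol{Y}_{\!j}:=|\{u<k_j\}\cap Q_j|/|Q_j|$; the goal is a recursion $\boldsymbol{Y}_{\!j+1}\leq C\boldsymbol{b}^j\delta^{-q_1}\boldsymbol{Y}_{\!j}^{1+\alpha}$ amenable to the fast geometric convergence lemma~\autoref{Lm:FGC}.

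Applying~\eqref{d3} to the pair $Q_{j+1}\subset Q_j$, with an auxiliary time cutoff providing $|\partial_t\zeta|\lesssim 2^j/\tau$, yields an energy inequality whose local part is dominated by $2^{jp}\rho^{-sp}\delta^{-1}\iint_{Q_j}w_j^p\dxt\leq 2^{jp}\rho^{-sp}\delta^{-1}k^p|Q_j|\boldsymbol{Y}_{\!j}$, plus a tail contribution $\mathtt{T}_j$. To control $\mathtt{T}_j$, I split $\R^N\setminus B_{\rho_j}=(B_R\setminus B_{\rho_j})\cup(\R^N\setminus B_R)$: on the inner annulus, $u\geq0$ in $\cQ_R$ forces $w_j\leq k$, contributing $\lesssim k^{p-1}$; on the outer complement $w_j\leq u_-+k$, and since $u_-\equiv0$ inside $B_R$, the scaling identity $[\tail(u_-;B_{\rho_j})]^{p-1}=(\rho_j/R)^{sp}[\tail(u_-;B_R)]^{p-1}$, combined with H\"older's inequality in time of exponent $(p-1+\eps)/(p-1)$ and the main hypothesis, delivers
\begin{equation*}
\dashint_{-\tau_j}^{0}[\tail(w_j(t);B_{\rho_j})]^{p-1}\d t\lesssim k^{p-1}\delta^{-(p-1)/(p-1+\eps)}.
\end{equation*}
Using then $\int w_j\d x\leq k|A_j(t)|$, with $A_j(t):=\{u(\cdot,t)<k_j\}\cap B_{\rho_j}$, and a further H\"older inequality in time with exponents $\left((p-1+\eps)/\eps,(p-1+\eps)/(p-1)\right)$ extracts a fractional power of $\boldsymbol{Y}_{\!j}$, producing $\mathtt{T}_j\leq C\,2^{jp}\rho^{-sp}k^p|Q_j|\delta^{-q_1}\boldsymbol{Y}_{\!j}^{\theta}$ with $\theta=\eps/(p-1+\eps)\in(0,1)$ and $q_1\geq 1$.

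The endgame is the fractional parabolic Sobolev embedding~\autoref{FS} applied to $\zeta w_j$, which upgrades the energy bound to $\iint_{Q_{j+1}}w_j^{p\kappa}\dxt\lesssim(\textrm{Energy})^{\kappa_0}$ for $\kappa,\kappa_0>1$ depending on $N,s,p$. The Chebyshev observation $w_j\geq k/2^{j+2}$ on $\{u<k_{j+1}\}$ then yields
\begin{equation*}
\boldsymbol{Y}_{\!j+1}\leq C_\star\,\boldsymbol{b}^j\,\delta^{-q_1\kappa_0}\,\boldsymbol{Y}_{\!j}^{\theta\kappa_0},
\end{equation*}
with $C_\star,\boldsymbol{b}$ depending only on $\data$ and $\eps$. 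Choosing $\kappa$ large enough that $\alpha:=\theta\kappa_0-1>0$, \autoref{Lm:FGC} forces $\boldsymbol{Y}_{\!\infty}=0$, i.e.\ $u\geq k/2$ a.e.\ in $Q_{\rho,\tau}(z_o)$, provided $\boldsymbol{Y}_{\!0}\leq c_\star\,\delta^{q_1\kappa_0/\alpha}=:\nu_0\,\delta^q$. Since $\boldsymbol{Y}_{\!0}\leq|\{u<k\}\cap Q_{2\rho,2\tau}|/|Q_{2\rho,2\tau}|$, this exactly matches the smallness hypothesis. The principal difficulty is cleanly combining the several sources of $\delta$-degeneration---the time-derivative coefficient $\tau^{-1}=\delta^{-1}\rho^{-sp}$, the H\"older gap $\delta^{-(p-1)/(p-1+\eps)}$ in the tail (which is precisely what produces the exponent $sp\eps/[(p-1)(p-1+\eps)]$ in the hypothesis), and the sub-linear exponent $\theta<1$ coming from the time-integrability mismatch---into a single clean positive power $q>1$ of $\delta$.
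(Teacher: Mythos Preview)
Your overall scheme is right in spirit, but there is a genuine gap at the closing step. You write ``choosing $\kappa$ large enough that $\alpha:=\theta\kappa_0-1>0$''. However, $\kappa$ is \emph{not} a free parameter: in the parabolic fractional Sobolev embedding \autoref{FS} one has $\kappa=1+(\kappa_\ast-1)/\kappa_\ast$ with $\kappa_\ast$ fixed by $N,s,p$. On the other hand your tail exponent $\theta=\eps/(p-1+\eps)$ tends to $0$ as $\eps\downarrow0$. Hence for small $\eps$ the recursion you derive, $\boldsymbol{Y}_{\!j+1}\le C\boldsymbol{b}^j\delta^{-q_1\kappa_0}\boldsymbol{Y}_{\!j}^{\theta\kappa_0}$ (or, equivalently, the two-term form $\boldsymbol{Y}_{\!j+1}\lesssim\boldsymbol{Y}_{\!j}^{1+\beta}+\boldsymbol{Y}_{\!j}^{\beta+\theta}$ with $\beta=1/p-1/(p\kappa)$), has exponent strictly less than $1$ on the tail branch, and \autoref{Lm:FGC} does not apply. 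Bounding $\int_{T_j}|A_j(t)|^{(p-1+\eps)/\eps}\d t$ crudely by $|B_j|^{(p-1)/\eps}|A_j|$ and reducing to a single variable $\boldsymbol{Y}_{\!j}$ loses exactly the information needed to close.

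The paper's remedy is to \emph{not} collapse that time-integral into a power of $\boldsymbol{Y}_{\!j}$, but to carry it as a second iterated quantity
\[
\boldsymbol{Z}_j:=\Big[\dashint_{T_j}\big(|A_j(t)|/|B_j|\big)^{(p-1+\eps)/\eps}\d t\Big]^{\eps/((1+\lambda)(p-1+\eps))},\qquad \lambda=\tfrac{\eps sp}{N(p-1+\eps)}.
\]
A separate recursion $\boldsymbol{Z}_{j+1}\le c\boldsymbol{b}^j\delta^{-\gamma}(\boldsymbol{Y}_{\!j}+\boldsymbol{Z}_j^{1+\lambda})$ is obtained via the Gagliardo--Nirenberg estimate \autoref{Lm:GN} (choosing $\widetilde q=p(1+\lambda)$, $\widetilde r=p(1+\lambda)(p-1+\eps)/\eps$, $m=p$). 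Combining with $\boldsymbol{Y}_{\!j+1}\le c\boldsymbol{b}^j(\boldsymbol{Y}_{\!j}^{1+\beta}+\boldsymbol{Y}_{\!j}^{\beta}\boldsymbol{Z}_j^{1+\lambda})$ and setting $\boldsymbol{K}_j:=\boldsymbol{Y}_{\!j}+\boldsymbol{Z}_j^{1+\lambda}$ yields $\boldsymbol{K}_{j+1}\le c\boldsymbol{b}^j\delta^{-\gamma'}\boldsymbol{K}_j^{1+\min\{\beta,\lambda\}}$, which is super-linear for \emph{every} $\eps>0$ since $\lambda>0$. Your handling of the tail split and the $\delta$-bookkeeping is otherwise fine; the missing ingredient is precisely this two-variable iteration.
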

\begin{proof}
We assume $z_0=(x_0,t_0)=(0,0)$ and split the proof into three steps. 

\smallskip

\emph{Step 1.}\quad To set up the iteration \`{a} la De Giorgi, we define  $Q_i:=Q_{\rho_i,\tau_i}:=B_i \times T_i$ for $i \in \N_0$, where $\rho_i$ and $\tau_i$ are defined according to
\[
\rho_i:=(1+2^{-i})\rho, \quad \tau_i:=\delta (1+2^{-i})\rho^{sp}
\]
with $\delta \in (0,1]$ being an arbitrary number and where we shortened $B_i\equiv B_{\rho_i}$ and $T_i \equiv (-\tau_i,0]$. Let
\[
\widetilde{\rho}_i:=\frac{1}{2}(\rho_i+\rho_{i+1}), \quad \widetilde{\tau}_i:=\frac{1}{2}(\tau_i+\tau_{i+1}) ; 
\]
and correspondingly, let $\widetilde{B}_i:=B_{\widetilde{\rho}_i}$, $\widetilde{T}_i:=(-\widetilde{\tau}_i,0]$ and $\widetilde{Q}_i:=\widetilde{B}_i\times \widetilde{T}_i$. These cylinders are constructed in such a way that $Q_{i+1} \subset \widetilde{Q}_i \subset Q_i$ for every $i \in \N_0$. For fixed $k>0$, we define the sequences of levels
\[
k_i:=\frac{k}{2}+\frac{k}{2^{i+1}}, \quad \widetilde{k}_i:=\frac{1}{2}(k_i+k_{i+1}).
\]
Finally we set
\[
A_i:=\left\{u<k_i\right\} \cap Q_i, \quad \boldsymbol{Y}_{\!\!i}:=\frac{|A_i|}{|Q_i|} \leq 1, \quad \widetilde{w}:=(u-\widetilde{k}_i)_-.
\]
\smallskip

\emph{Step 2.}\quad To begin, let $\varphi \in C^\infty(Q_i ; [0,1])$ be a cutoff function satisfying 
\[
\1_{Q_{i+1}} \leq \varphi \leq \1_{Q_i}, \quad |\nabla \varphi| \leq 2^{i+4}/\rho, \quad \supp(\varphi) \subset \widetilde{Q}_i. 
\]
Then, an application of Proposition~\ref{FS} with $\mathsf{d}=2^{-i-4}$ and H\"{o}lder's inequality imply that
\begin{align}\label{eq:dg-est-1}
\frac{k}{2^{i+3}}\left|A_{i+1}\right|
&\leq \int_{\widetilde{Q}_i} \widetilde{w}\varphi\dxt \notag\\
&\leq \left[\int_{\widetilde{Q}_i} (\widetilde{w}\varphi)^{p \kappa}\dxt\right]^{\nicefrac{1}{p\kappa}}\left|A_i\right|^{1-\nicefrac{1}{p\kappa}} \notag\\
&\leq c \Bigg[\rho^{sp} \int_{\widetilde{T}_i}\int_{\widetilde{B}_i}\int_{\widetilde{B}_i} \frac{\left|\varphi \widetilde{w}(x,t)-\varphi \widetilde{w}(y,t)\right|^p}{|x-y|^{d+sp}}\dxt \notag\\
&\quad \quad \quad +2^{i(d+sp)}\int_{\widetilde{Q}_i}(\varphi \widetilde{w})^p\dxt\Bigg]^{\nicefrac{1}{p\kappa}}\left[\sup_{t _\in \widetilde{T}_i}\dashint_{\widetilde{B}_i}(\varphi \widetilde{w})^p(t)\d{x} \right]^{\frac{\kappa_\ast-1}{p\kappa \kappa_\ast}}\left|A_i\right|^{1-\nicefrac{1}{p\kappa}} 
\end{align}
with $\kappa=1+\frac{\kappa_\ast-1}{\kappa_\ast}$. Triangle inequality implies
\begin{align*}
\left|\varphi\widetilde{w}(x,t)-\varphi\widetilde{w}(y,t)\right|^p \lesssim_p \left|\widetilde{w}(x,t)-\widetilde{w}(y,t)\right|^p\varphi^p(x,t)+\widetilde{w}^p(y,t)\left|\varphi(x,t)-\varphi(y,t)\right|^p,
\end{align*}
and therefore~\eqref{d3'} can be applied to get 
\begin{align}\label{eq:dg-est-d}
\rho^{sp} &\int_{\widetilde{T}_i}\int_{\widetilde{B}_i}\int_{\widetilde{B}_i} \frac{\left|\varphi \widetilde{w}(x,t)-\varphi \widetilde{w}(y,t)\right|^p}{|x-y|^{d+sp}}\dxt \notag\\
&\lesssim_p \rho^{sp}  \int_{\widetilde{T}_i}\int_{\widetilde{B}_i}\int_{\widetilde{B}_i} \frac{\left| \widetilde{w}(x,t)-\widetilde{w}(y,t)\right|^p}{|x-y|^{d+sp}}\dxt \notag\\
&\quad+\rho^{sp} \left(\frac{2^{i+4}}{\rho}\right)^p\int_{\widetilde{Q}_i}\widetilde{w}^p(y,t)\d{y}\left(\int_{\widetilde{B}_i}\frac{\d{x}}{|x-y|^{d+(s-1)p}}\right)\d{t} \notag\\
&\!\!\!\stackrel{\eqref{d3'}}{\lesssim}_{d,s,p,\gamma_{\mathrm{DG}}}\rho^{sp}\left[\frac{\rho_i^{(1-s)p}}{(\rho_i-\widetilde{\rho}_i)^p}+\frac{1}{\tau_i-\widetilde{\tau}_i}\right]\int_{\widetilde{Q}_i}\widetilde{w}^p\dxt \notag\\
&\quad+\rho^{sp}\frac{\rho_i^{d}}{(\rho_i-\widetilde{\rho}_i)^{d+sp}}\int_{Q_i}\widetilde{w}\d{x}\Big[\tail(\widetilde{w}(t)\,; B_i)\Big]^{p-1}\d{t}+2^{ip}\int_{Q_i}\widetilde{w}^p\dxt\,,
\end{align}
where in the penultimate line we used a simple estimate:
\[
\int_{\widetilde{B}_i}\frac{\d{x}}{|x-y|^{d+(s-1)p}} \leq \int_{2\widetilde{B}_i(y)}\frac{\d{x}}{|x-y|^{d+(s-1)p}}\leq c(d,s,p)\rho^{(1-s)p}.
\]
A simple calculation yields
\[
\frac{\rho_i^{(1-s)p}}{(\rho_i-\widetilde{\rho}_i)^p}, \,\,\frac{\rho_i^{d}}{(\rho_i-\widetilde{\rho}_i)^{d+sp}} \leq \frac{c(d,s,p)2^{ip}}{\rho^{sp}}, \quad \frac{1}{\tau_i-\widetilde{\tau}_i} \leq \frac{c(d,s,p)2^{ip}}{\delta \rho^{sp}},
\]
thereby we conclude that
\begin{align}\label{eq:dg-est-2}
\rho^{sp} &\int_{\widetilde{T}_i}\int_{\widetilde{B}_i}\int_{\widetilde{B}_i} \frac{\left|\varphi \widetilde{w}(x,t)-\varphi \widetilde{w}(y,t)\right|^p}{|x-y|^{d+sp}}\dxt \notag\\
&\leq c2^{ip}\left(1+\frac{1}{\delta}\right)\int_{Q_i}\widetilde{w}^p\dxt+c2^{ip}\int_{Q_i}\widetilde{w}\d{x}\Big[\tail(\widetilde{w}(t)\,; B_i)\Big]^{p-1}\d{t}
\end{align}
with $c\equiv c(d,s,p,\gamma_{\mathrm{DG}})<\infty$. The integral average on $\widetilde{B}_i$ containing $(\varphi \widetilde{w})^p(\cdot, t)$ can be treated similarly. Indeed, similarly as above we have
\begin{equation}\label{eq:dg-est-3}
\sup_{t _\in \widetilde{T}_i}\dashint_{\widetilde{B}_i}(\varphi \widetilde{w})^p(t)\d{x} \leq \frac{c 2^{ip}}{|B_i|}\rho^{-sp}\Big[ (\mathrm{I})+(\mathrm{II})\Big],
\end{equation}
where we used the fact that $|B_i|/|\widetilde{B}_i| \leq c(d)$ and set
\begin{align*}
(\mathrm{I})&:=\left(1+\frac{1}{\delta}\right)\int_{Q_i}\widetilde{w}^p(x,t)\dxt, \\
(\mathrm{II})&:=\int_{Q_i}\widetilde{w}\d{x}\Big[\tail\left(\widetilde{w}(t)\,;B_i\right) \Big]^{p-1}\d{t}.
\end{align*}
A straightforward computation shows that $(\mathrm{I})  \leq 2k^p |A_i|/\delta$, while as for $(\mathrm{II})$, using $u(\cdot, t) \geq 0$ in $B_R$ and $\widetilde{k}_i \leq k$ we have that
\begin{align*}
\Big[\tail\left(\widetilde{w}(t)\,;B_i\right) \Big]^{p-1} &=\rho_i^{sp}\int_{B_R \setminus B_i} \frac{\widetilde{w}^{p-1}(x,t)}{|x|^{d+sp}}\d{x}+\rho_i^{sp}\int_{\R^d \setminus B_R} \frac{\widetilde{w}^{p-1}(x,t)}{|x|^{d+sp}}\d{x} \\
& \leq ck^{p-1}+c\left(\frac{\rho}{R}\right)^{sp}\Big[\tail\left((u(t)-k)_-\,;B_R\right) \Big]^{p-1}
\end{align*}
with a constant $c\equiv c(d,s,p)<\infty$, where in the last line we used that
\[
\int_{B_R \setminus B_i}\frac{\d{x}}{|x|^{d+sp}} \leq c(d,s,p)\rho_i^{-sp}.
\]
This in turn leads to
\begin{align*}
(\mathrm{II})
&\leq \int_{Q_i}(u-k_i)_-\d{x}\left[ck^{p-1}+c\left(\frac{\rho}{R}\right)^{sp}\Big[\tail\left((u(t)-k)_-\,;B_R\right) \Big]^{p-1}\right]\d{t}\\
&\leq ck^p|A_i|+c\underbrace{\left(\frac{\rho}{R}\right)^{sp}\int_{Q_i}(u-k_i)_-\d{x}\Big[\tail\left((u(t)-k)_-\,;B_R\right) \Big]^{p-1}\d{t}}_{=:(\mathrm{III})}.
\end{align*}
Joining these estimates with~\eqref{eq:dg-est-1}--\eqref{eq:dg-est-3} yields
\begin{align}\label{eq:dg-est-4}
\frac{k}{2^{i+3}}\left|A_{i+1}\right| &\leq c \left[2^{i(d+sp)}\left[(\mathrm{I})+(\mathrm{II})\right]\right]^{\nicefrac{1}{p\kappa}}\left[\frac{2^{ip}\rho^{-sp}}{|B_i|}\left[(\mathrm{I})+(\mathrm{II})\right]\right]^{\frac{1}{p\kappa}\cdot \frac{\kappa_\ast-1}{\kappa_\ast}}|A_i|^{1-\nicefrac{1}{p\kappa}} \notag\\
&=c2^{i(d+sp)} \left(\frac{\rho^{-sp}}{|B_i|}\right)^{\frac{1}{p\kappa}\cdot \frac{\kappa_\ast-1}{\kappa_\ast}}\Big[(\mathrm{I})+(\mathrm{II})\Big]^{\nicefrac{1}{p}}|A_i|^{1-\nicefrac{1}{p\kappa}} \notag\\
&\leq c2^{i(d+sp)} \left(\frac{\rho^{-sp}}{|B_i|}\right)^{\frac{1}{p\kappa}\cdot \frac{\kappa_\ast-1}{\kappa_\ast}}\frac{1}{\delta^{\nicefrac{1}{p}}}\Big[k^p|A_i|+(\mathrm{III})\Big]^{\nicefrac{1}{p}}|A_i|^{1-\nicefrac{1}{p\kappa}}
\end{align}
where $c\equiv c(d,s,p,\gamma_{\mathrm{DG}})<\infty$. Here in the above manipulation $1+\frac{\kappa_\ast-1}{\kappa_\ast}=\kappa$ was used.

Further, the estimate of $(\mathrm{III})$ on the right side of~\eqref{eq:dg-est-4} can be derived by a careful inspection. Indeed, denoting the time-slice of $A_i$ by
\[
A_i(t):=\left\{u(\cdot,t)<k_i\right\} \cap B_i \quad \textrm{for}\,\,\, t \in T_i, 
\]
the H\"{o}lder inequality with exponents $\left(\frac{p-1+\eps}{p-1}, \frac{p-1+\eps}{\eps}\right)$ provides that
\begin{align*}
(\mathrm{III})
&\leq c\rho^{\frac{(p-1)sp}{p-1+\eps}}k \left(\frac{\rho}{R}\right)^{\frac{\eps sp}{p-1+\eps}}\left(\dashint_{I_R} \Big[\tail\left((u(t)-k)_-\,;B_R\right) \Big]^{p-1+\eps}\d{t} \right)^{\frac{p-1}{p-1+\eps}} \\
&\quad \quad \quad \quad \quad \quad \quad \quad \quad \quad \times \left(\int_{T_i}|A_i(t)|^{\frac{p-1+\eps}{\eps}}\d{t}\right)^{\nicefrac{\eps}{(p-1+\eps)}},
\end{align*}
where we used that $\left(\frac{\rho}{R}\right)^{sp} R^{\frac{(p-1)sp}{p-1+\eps}}=\rho^{\frac{(p-1)sp}{p-1+\eps}}\left(\frac{\rho}{R}\right)^{\frac{\eps sp}{p-1+\eps}}$.
Now, if we enforce
\[
\left(\frac{\rho}{R}\right)^{\frac{sp\eps}{(p-1)(p-1+\eps)}} \left(\dashint_{I_R}\Big[\tail(u_-(t)\,; B_R)\Big]^{p-1+\eps}\d{t}\right)^{\nicefrac{1}{(p-1+\eps)}} \leq \frac{k}{2}
\]
then the above estimate becomes
\[
(\mathrm{III})\leq c\rho^{\frac{(p-1) sp}{p-1+\eps}}k^p\left(\int_{T_i}|A_i(t)|^{\frac{p-1+\eps}{\eps}}\d{t}\right)^{\nicefrac{\eps}{(p-1+\eps)}}.
\]
Overall, we have shown from~\eqref{eq:dg-est-4} that
\begin{align*}
\frac{k}{2^{i+3}}\left|A_{i+1}\right| &\leq c2^{i(d+sp)} \left(\frac{\rho^{-sp}}{|B_i|}\right)^{\frac{1}{p\kappa}\cdot  \frac{\kappa_\ast-1}{\kappa_\ast}}\frac{|A_i|^{1-\nicefrac{1}{p\kappa}}}{\delta^{\nicefrac{1}{p}}} \\
&\quad \quad \quad \times \left[|A_i|^{\nicefrac{1}{p}}+\rho^{\frac{(p-1) s}{p-1+\eps}}\left(\int_{T_i}|A_i(t)|^{\frac{p-1+\eps}{\eps}}\d{t}\right)^{\nicefrac{\eps}{p(p-1+\eps)}}\right]k.
\end{align*}
Dividing the both side of the last display by $|Q_{i+1}|$ yields that
\begin{align}\label{eq:dg-est-5}
\frac{\left|A_{i+1}\right|}{\left|Q_{i+1}\right|} &\leq c 4^{i(d+sp)} \frac{|Q_i|^{1+\nicefrac{1}{p}-\nicefrac{1}{p\kappa}}}{\left|Q_{i+1}\right|}\left(\frac{\rho^{-sp}}{|B_i|}\right)^{\frac{1}{p\kappa}\cdot \frac{\kappa_\ast-1}{\kappa_\ast}}\frac{1}{\delta^{\nicefrac{1}{p}}}\notag \\
& \quad \times \left[\left(\frac{\left|A_{i}\right|}{\left|Q_{i}\right|} \right)^{1+\nicefrac{1}{p}-\nicefrac{1}{p\kappa}}+\left(\frac{\left|A_{i}\right|}{\left|Q_{i}\right|} \right)^{1-\nicefrac{1}{p\kappa}}\frac{\rho^{\frac{(p-1) s}{p-1+\eps}}}{\left|Q_i\right|^{\nicefrac{1}{p}}}\left(\int_{T_i}|A_i(t)|^{\frac{p-1+\eps}{\eps}}\d{t}\right)^{\nicefrac{\eps}{p(p-1+\eps)}}\right].
\end{align}
Observe that
\[
\frac{|Q_i|^{1+\nicefrac{1}{p}-\nicefrac{1}{p\kappa}}}{\left|Q_{i+1}\right|}\left(\frac{\rho^{-sp}}{|B_i|}\right)^{\frac{1}{p\kappa}\cdot \frac{\kappa_\ast-1}{\kappa_\ast}} =\frac{\left|Q_i\right|}{\left|Q_{i+1}\right|} \left(\frac{\left|Q_i\right|}{\left|B_i\right|\rho^{sp}}\right)^{\nicefrac{1}{p}-\nicefrac{1}{p\kappa}} \leq c(d)\delta^{\nicefrac{1}{p}-\nicefrac{1}{p\kappa}}
\]
and 
\[
\frac{\rho^{\frac{(p-1) s}{p-1+\eps}}}{\left|Q_i\right|^{\nicefrac{1}{p}}}\left(\int_{T_i}|A_i(t)|^{\frac{p-1+\eps}{\eps}}\d{t}\right)^{\nicefrac{\eps}{p(p-1+\eps)}} \leq c\delta^{-\frac{p-1}{p(p-1+\eps)}} \left[\dashint_{T_i}\left(\frac{\left|A_i(t)\right|}{\left|B_i\right|}\right)^{\frac{p-1+\eps}{\eps}}\d{t}\right]^{\nicefrac{\eps}{p(p-1+\eps)}}
\]
with $c\equiv c(d,s,p)<\infty$. Hence, letting
\[
\boldsymbol{Z}_i:=\left[\dashint_{T_i}\left(\frac{\left|A_i(t)\right|}{\left|B_i\right|}\right)^{\frac{p-1+\eps}{\eps}}\d{t}\right]^{\frac{\eps}{(1+\lambda)(p-1+\eps)}} \leq 1 
\]
where $\lambda>0$ is to be specified later,~\eqref{eq:dg-est-5} leads to
\begin{equation*}
\boldsymbol{Y}_{\!i+1} \leq c 4^{i(d+sp)} \delta^{-\frac{1}{p\kappa}-\frac{p-1}{p(p-1+\eps)}}\left(\boldsymbol{Y}_{\!i}^{1+\beta}+\boldsymbol{Y}_{\!i}^\beta\cdot \boldsymbol{Y}_{\!i}^{\frac{p-1}{p}}\boldsymbol{Z}_i^{\frac{1+\lambda}{p}}\right)
\end{equation*}
where $c\equiv c(\data)<\infty$ and $\beta:=\frac{1}{p}-\frac{1}{p\kappa}>0$. Thanks to the H\"{o}lder inequality, we have
\[
\frac{|A_i|}{|Q_i|} = \dashint_{T_i}\frac{|A_i(t)|}{|B_i|}\d{t} \leq \left[\dashint_{T_i} \left(\frac{|A_i(t)|}{|B_i|}\right)^{\frac{p-1+\eps}{\eps}}\d{t}\right]^{\nicefrac{\eps}{(p-1+\eps)}}
\]
and therefore, 
\[
\boldsymbol{Y}_i^{\frac{p-1}{p}} \leq \boldsymbol{Z}_i^{\frac{p-1}{p}(1+\lambda)}.
\]
As a consequence, we have shown that
\begin{equation}\label{eq:dg-est-6}
\boldsymbol{Y}_{\!i+1} \leq c\boldsymbol{b}^i\left(\boldsymbol{Y}_{\!i}^{1+\beta}+\boldsymbol{Y}_{\!i}^\beta\boldsymbol{Z}_i^{1+\lambda}\right) \quad \mbox{with \,$\boldsymbol{b}=4^{d+sp}$.}
\end{equation}
Note that the free parameter $\lambda>0$ is still to be selected.
\smallskip

\emph{Step 3.}\quad In this final step, we will deduce a recursive inequality for $\{\boldsymbol{Z}_{\!i}\}_{i \in \N_0}$. Firstly, observe that
\begin{align}\label{eq:dg-est-7}
\left|A_{i+1}(t)\right|&=\left|\left\{u(\cdot,t) <k_{i+1} \right\} \cap B_{i+1}\right| \notag\\
&\leq \int_{A_{i+1}(t)}\left(\frac{\widetilde{k}_i-u}{\widetilde{k}_i-k_{i+1}}\right)^{p(1+\lambda)}\d{x} \notag\\
&\leq \left(\frac{2^{i+3}}{k}\right)^{p(1+\lambda)}\int_{B_{i+1}}(u-\widetilde{k}_i)_-^{p(1+\lambda)}\d{x}.
\end{align}
Next, as we are going to apply Lemma~\ref{Lm:GN}, choose
\[
\widetilde{q}=p(1+\lambda), \quad \widetilde{r}=\frac{sp^2(1+\lambda)}{d\lambda}, \quad m=p,
\]
which ensures the condition in Lemma~\ref{Lm:GN}. We then determine the free parameter $\lambda>0$ so that
\[
\widetilde{r}=\frac{sp^2(1+\lambda)}{d\lambda}=\frac{p(1+\lambda)(p-1+\eps)}{\eps} \quad \iff \quad \lambda=\frac{\eps sp}{d(p-1+\eps)}.
\]
These selections yield that
\begin{align*}
\frac{d\widetilde{q}}{sp\widetilde{q}+d\widetilde{r}}=\frac{1}{\widetilde{r}}\cdot\frac{d\widetilde{q}\widetilde{r}}{sp\widetilde{q}+d\widetilde{r}}=\frac{\eps} {p(1+\lambda)(p-1+\eps)}\cdot p=\frac{\eps} {(1+\lambda)(p-1+\eps)}.
\end{align*}
We thus are allowed to apply Lemma~\ref{Lm:GN} and, use~\eqref{eq:dg-est-7} and~\eqref{d2} to obtain that
\begin{align}\label{eq:dg-est-8}
\boldsymbol{Z}_{i+1}&=\left[\frac{1}{|B_{i+1}|^{\frac{p-1+\eps}{\eps}}|T_{i+1}|}\int_{T_{i+1}}|A_{i+1}(t)|^{\frac{p-1+\eps}{\eps}}\d{t}\right]^{\frac{\eps}{(1+\lambda)(p-1+\eps)}} \notag\\[4pt]
&\!\!\!\!\stackrel{\eqref{eq:dg-est-7}}{\leq} \left[\frac{1}{|B_{i+1}|^{\frac{p-1+\eps}{\eps}}|T_{i+1}|}\left(\frac{2^{i+3}}{k}\right)^{\frac{p(1+\lambda)(p-1+\eps)}{\eps}}\int_{T_{i+1}}\left\|(u-\widetilde{k}_i)_-\right\|_{L^{p(1+\lambda)}(B_{i+1})}^{\frac{p(1+\lambda)(p-1+\eps)}{\eps}}\d{t}\right]^{\frac{\eps}{(1+\lambda)(p-1+\eps)}}\notag\\[4pt]
&\!\!\leq c \frac{2^{ip}}{k^p} \left(\frac{1}{|B_{i+1}|^{\frac{p-1+\eps}{\eps}}|T_{i+1}|}\right)^{\frac{\eps}{(1+\lambda)(p-1+\eps)}}\left[\int_{\widetilde{T}_i}[(u-\widetilde{k}_i)_-]_{W^{s,p}(\widetilde{B}_i)}^p\d{t}\right. \notag\\[4pt]
&\left. \quad \quad \quad  \quad \quad \quad  \quad \quad \quad\quad +(\widetilde{\rho}_i)^{-sp}\left\|(u-\widetilde{k}_i)_-\right\|_{L^p(\widetilde{Q}_i)}^{p} +\sup_{t_ \in \widetilde{T}_i}\left\|(u-\widetilde{k}_i)_-\right\|_{L^p(\widetilde{B}_i)}^{p}\right] \notag\\[4pt]
&\!\!\!\stackrel{\eqref{d3'}}{\leq} c \frac{2^{ip}}{k^p} \left(\frac{1}{|B_{i+1}|^{\frac{p-1+\eps}{\eps}}|T_{i+1}|}\right)^{\frac{\eps}{(1+\lambda)(p-1+\eps)}} \notag\\[4pt]
&\quad \quad \quad \quad\times \left\{\left[\frac{\rho_i^{(1-s)p}}{(\rho_i-\widetilde{\rho}_i)^{p}}+\frac{1}{\tau_i-\widetilde{\tau}_i} +(\widetilde{\rho}_i)^{-sp}\right]\int_{\widetilde{Q}_i}\widetilde{w}^p\dxt \right.\notag\\[4pt]
&\left. \quad \quad \quad \quad \quad \quad +\frac{\rho_i^{d}}{(\rho_i-\widetilde{\rho}_i)^{d+sp}}\int_{Q_i}\widetilde{w}\d{x}\Big[\tail(\widetilde{w}(t)\,; B_i)\Big]^{p-1}\d{t}\right\} \notag\\[4pt]
&\leq c \frac{2^{(1+s)pi}}{k^p} \left(\frac{1}{|B_{i+1}|^{\frac{p-1+\eps}{\eps}}|T_{i+1}|}\right)^{\frac{\eps}{p(1+\lambda)(p-1+\eps)}}\rho^{-sp}\Big[(\mathrm{I})+(\mathrm{II})\Big],
\end{align}
where to obtain the last line we used the definition of $(\mathrm{I})$ and $(\mathrm{II})$, appearing in Step 2. 

Now, a completely similar argument to Step 2 ensures that
\begin{align*}
(\mathrm{I})+(\mathrm{II}) &\leq \frac{c}{\delta}\Big[k^p|A_i|+(\mathrm{III})\Big]\\
&\leq \frac{c|Q_i|}{\delta}k^p\left[\frac{|A_i|}{|Q_i|} +\frac{\rho^{\frac{(p-1) sp}{p-1+\eps}}}{|Q_i|}\left(\int_{T_i}|A_i(t)|^{\frac{p-1+\eps}{\eps}}\d{t}\right)^{\nicefrac{\eps}{(p-1+\eps)}}\right]\\
&\leq \frac{c|Q_i|}{\delta}k^p\left[\frac{|A_i|}{|Q_i|} +\left[\dashint_{T_i}\left(\frac{|A_i(t)|}{|B_i|}\right)^{\frac{p-1+\eps}{\eps}}\d{t}\right]^{\nicefrac{\eps}{(p-1+\eps)}}\right]\\
&=\frac{c|Q_i|}{\delta}k^p \left(\boldsymbol{Y}_{\!i}+\boldsymbol{Z}_i^{1+\lambda}\right)
\end{align*}
with a constant $c\equiv c(\data)<\infty$. After substituting the last display into~\eqref{eq:dg-est-8}, we estimate that
\begin{align*}
\left(\frac{1}{|B_{i+1}|^{\frac{p-1+\eps}{\eps}}|T_{i+1}|}\right)^{\frac{\eps}{(1+\lambda)(p-1+\eps)}}\rho^{-sp}|Q_i|&\leq C(d,p,\lambda,\eps)\delta^{-\frac{\eps}{(1+\lambda)(p-1+\eps)}-1}\rho^{\frac{d\lambda}{1+\lambda}-\frac{\eps sp}{(1+\lambda)(p-1+\eps)}} \\
& =C(d,p,\lambda,\eps)\delta^{-\frac{\eps d}{d(p-1+\eps)+\eps sp}-1},
\end{align*}
where we used the fact that
\[
\lambda=\frac{\eps sp}{d(p-1+\eps)} \quad \iff \quad \frac{d\lambda}{1+\lambda}-\frac{\eps sp}{(1+\lambda)(p-1+\eps)}=0
\]
and
\[
-\frac{\eps}{(1+\lambda)(p-1+\eps)}-1=-\frac{\eps d}{d(p-1+\eps)+\eps sp}-1.
\]
As a consequence, we find
\begin{equation}\label{eq:dg-est-9}
\boldsymbol{Z}_{i+1} \leq c \boldsymbol{b}^i \delta^{-\frac{\eps d}{d(p-1+\eps)+\eps sp}-1}\left(\boldsymbol{Y}_{\!i}+\boldsymbol{Z}_i^{1+\lambda}\right) \quad \mbox{with \,$\boldsymbol{b}=4^p$.}
\end{equation}
Let
\[
\boldsymbol{K}_i:=\boldsymbol{Y}_{\!i}+\boldsymbol{Z}_i^{1+\lambda}, \quad \gamma:=\max \left\{\frac{1}{p\kappa}+\frac{p-1}{p(p-1+\eps)},\,\frac{\eps d}{d(p-1+\eps)+\eps sp}+1\right\}.
\]
Then, following the argument as in~\cite[Chapter I.4, Lemma 4.2]{DiB93}, we are able to obtain from~\eqref{eq:dg-est-6} and~\eqref{eq:dg-est-9} that
\[
\boldsymbol{K}_{i+1} \leq 2c^{1+\lambda} \delta^{-\gamma(1+\lambda)}\boldsymbol{b}^{(1+\lambda)i}\boldsymbol{K}_i^{1+\min\{\beta, \lambda\}}
\]
for positive constants $\boldsymbol{b}=4^p$ and $c>1$ depending only on $\data$. Note that both positive constants $\beta$ and $\lambda$ hinge upon $d,s,p$ and $\eps$. Hence, by  Lemma~\ref{Lm:FGC}, there exists a positive constant $\nu \in (0,1)$ depending only on $\data$, so that $\boldsymbol{K}_{\!\infty}=\lim_{i \to \infty}\boldsymbol{K}_{\!i}=0$ if we demand that $\boldsymbol{K}_0 \leq \nu$, which in turn leads to
\[
|A_0|=\left|\left\{u<k_0\right\} \cap Q_0\right|=\left|\left\{u<k\right\} \cap Q_{2\rho,2\tau}\right| \leq \nu\left|Q_{2\rho,2\tau}\right|.
\]
Since $\boldsymbol{K}_{\!\infty}=0$ we have that
\[
u \geq \frac{k}{2} \quad \mbox{a.e. in $Q_{\rho,\tau}$},
\]
as desired.
\end{proof}
\subsection{Shrinking lemma and propagation of measure information} \label{Shrink+Prop}
The following Lemma shows that is possible to propagate measure-theoretical information to future times.

\begin{lemma}\label{Lm:mt}
Let $u \in \mathbf{PDG}_-^{s,p}(\Omega_T, \gamma_{\mathrm{DG}}, \eps)$ in the sense of Definition~\ref{def of DG} be such that $u \geq 0$ in $\cQ_R \subset \Omega_T$. Suppose for some constants $k>0$ and $\alpha \in (0,1]$, there holds
\[
\Big|\left\{u(\cdot,t_0) \geq k \right\} \cap B_\rho(x_0)\Big| \geq \alpha |B_\rho(x_0)|
\]
then there exist constants $\eta$ and $\delta$ in $(0,1)$, both depending only on $\data$ and $\alpha$, such that either
\[
\left(\frac{\rho}{R}\right)^{\frac{sp\eps}{(p-1)(p-1+\eps)}} \left(\dashint_{I_R(t_0)} \Big[\tail \left(u_-(t)\,;B_R(x_0)\right)\Big]^{p-1+\eps}\d{t}\right)^{\nicefrac{1}{(p-1+\eps)}}>\eta k
\]
or
\[
\Big|\left\{u(\cdot,t) \geq \eta k \right\} \cap B_\rho(x_0)\Big| \geq \frac{\alpha}{2} |B_\rho(x_0)|
\]
for every $t \in (t_0,t_0+\delta \rho^{sp}]$, provided that  $B_\rho(x_0)\times (t_0,t_0+\delta \rho^{sp}] \subset \cQ_R \Subset \Omega_T$. More precisely, we can trace the dependences by
\[
\eta =\alpha/(8p) \quad \mbox{and} \quad \delta =\delta_0\alpha^{(d+p+1)\frac{p-1+\eps}{\eps}}
\]
for a constant $\delta_0 \equiv \delta_0(\data) \in (0,1)$.
\end{lemma}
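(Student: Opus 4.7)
The plan is to apply the Caccioppoli inequality \eqref{d3} with the constant truncation level $k(t)\equiv k$ to $w_- := (u-k)_-$, on a forward cylinder whose initial slice is at $t_o$. After translating $z_o = 0$, one applies \eqref{d3} on $B_{2\rho} \times (t_o, t_o + \delta \rho^{sp}]$ with inner/outer radii $\rho$ and $2\rho$ (assuming enough room inside $\cQ_R$; a mild rescaling handles the borderline case). The pointwise bound $w_- \leq k$, valid because $u \geq 0$ in $\cQ_R$, together with the hypothesis on the super-level set at $t_o$, yields the initial bound
\[
\int_{B_\rho} w_-^p(x, t_o)\,\d{x} \leq k^p\,\big|\{u(\cdot, t_o) < k\} \cap B_\rho\big| \leq (1-\alpha)\,k^p\,|B_\rho|.
\]
Consequently, for every $t \in (t_o, t_o + \delta\rho^{sp}]$ the energy inequality gives
\[
\int_{B_\rho} w_-^p(x, t)\,\d{x} \leq (1-\alpha)k^p |B_\rho| + \mathrm{Bulk} + \mathrm{Tail}.
\]

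The bulk term is routine: since $w_- \leq k$ in $\cQ_R$, one has $\mathrm{Bulk} \lesssim \rho^{-sp} k^p |B_{2\rho}| \cdot \delta \rho^{sp} \lesssim \delta k^p |B_\rho|$. The tail term is the main technical step. I would split $\tail(w_-(t)\,;B_{2\rho})^{p-1}$ into a contribution from $B_R \setminus B_{2\rho}$ (where $u \geq 0$ forces $w_- \leq k$ and gives a harmless $\lesssim k^{p-1}$) and one from $\R^N \setminus B_R$ (where $w_- \leq u_- + k$). After multiplying by $\int w_-(t)\,\d{x} \leq k|B_{2\rho}|$ and integrating in time over the short interval of length $\tau = \delta\rho^{sp}$, Hölder's inequality with exponents $\bigl(\frac{p-1+\eps}{\eps}, \frac{p-1+\eps}{p-1}\bigr)$ converts the $L^{p-1}$ time integral of $\tail(u_-)$ into the average $L^{p-1+\eps}$ appearing in $\mathbf{T}$. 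A careful algebraic bookkeeping of the powers of $\rho$ and $R$ reveals an exact cancellation that turns $(\rho/R)^{sp}$ and the $R$-weighted time integral into precisely $\rho^{sp}\mathbf{T}^{p-1}$, producing the gain $\delta^{\eps/(p-1+\eps)}$:
\[
\mathrm{Tail} \lesssim \delta\, k^p |B_\rho| + \delta^{\eps/(p-1+\eps)}\, k\, \mathbf{T}^{p-1}\, |B_\rho|.
\]

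To conclude, the elementary fact $w_-(x,t) \geq (1-\eta)k$ on $\{u(\cdot, t) < \eta k\} \cap B_\rho$ together with the preceding energy bound yields
\[
\big|\{u(\cdot, t) < \eta k\} \cap B_\rho\big| \leq (1-\eta)^{-p}\Bigl[(1-\alpha) + c\delta + c\delta^{\eps/(p-1+\eps)}(\mathbf{T}/k)^{p-1}\Bigr]\,|B_\rho|.
\]
In the alternative $\mathbf{T} \leq \eta k$, the bracket is bounded by $(1-\alpha) + c\delta + c\delta^{\eps/(p-1+\eps)}\eta^{p-1}$. Choosing $\eta = \alpha/(8p)$ makes $(1-\eta)^{-p} \leq 1 + \alpha/4$, after which a sufficiently small $\delta = \delta_0 \alpha^{(N+p)(p-1+\eps)/\eps + 1}$ absorbs both error terms into the leeway $\alpha/4$, giving the complementary bound $|\{u(\cdot,t) \geq \eta k\} \cap B_\rho| \geq (\alpha/2)|B_\rho|$. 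The main obstacle is the tail step: one must carefully split across $B_R$, set up the Hölder time estimate on the short interval $\tau = \delta \rho^{sp}$, and track the algebraic cancellation producing $\mathbf{T}^{p-1}$ with the explicit $\delta$-gain. Quantitatively tracking the $\alpha$-dependence in $\delta$ to match the stated exponent is bookkeeping, but must be executed with care.
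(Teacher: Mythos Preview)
Your outline captures the correct structure—Caccioppoli on a forward cylinder with initial slice at $t_o$, the tail split across $B_R$, and the H\"older-in-time step producing the $\delta^{\eps/(p-1+\eps)}$ gain together with exactly $\mathbf{T}^{p-1}$—and this matches the paper's proof. But your choice of radii $\rho$ (inner) and $2\rho$ (outer) does not fit the stated hypothesis, which only guarantees $B_\rho(x_o)\times(t_o,t_o+\delta\rho^{sp}]\subset\cQ_R$, not $B_{2\rho}$. Your aside ``a mild rescaling handles the borderline case'' does not repair this: shrinking $\rho$ would change both the initial density hypothesis and the conclusion, which are stated for the fixed ball $B_\rho$.

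The paper avoids this by applying \eqref{d3} on the pair $B_{(1-\sigma)\rho}\subset B_\rho$ with a free parameter $\sigma\in(0,1)$; the left-hand energy then lives only on $B_{(1-\sigma)\rho}$, and one recovers the desired measure bound on $B_\rho$ via the annulus estimate
\[
\big|A_{\eta k,\rho}(t)\big|\le \big|A_{\eta k,(1-\sigma)\rho}(t)\big|+|B_\rho\setminus B_{(1-\sigma)\rho}|\le \big|A_{\eta k,(1-\sigma)\rho}(t)\big|+N\sigma|B_\rho|.
\]
One then chooses $\sigma=\alpha/(8N)$ so that the extra $N\sigma$ is absorbed. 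This $\sigma$-trick is also where the exponent $(N+p)\tfrac{p-1+\eps}{\eps}+1$ in $\delta$ actually comes from: the Caccioppoli denominators $(r_2-r_1)^p=(\sigma\rho)^p$ and $(\sigma\rho)^{N+sp}$ introduce factors $\sigma^{-p}$ and $\sigma^{-(N+sp)}$, and with $\sigma\sim\alpha$ these force the stated power of $\alpha$ in $\delta$. With your fixed ratio $2$, those denominators carry no $\alpha$-dependence, so your argument would in fact yield a \emph{larger} $\delta$ (roughly $\delta\sim\alpha^{(p-1+\eps)/\eps}$), which still implies the lemma, but is not the displayed form and does not arise from the computation you sketch.
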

\begin{proof}
The proof is quite similar to~\cite[Lemma 4.1]{Nak23} and~\cite[Lemma 3.3]{Lia24a}, however, we report the full proof for the readability. Let $(x_0,t_0)=(0,0)$ for simplicity. The proof is condensed in showing
\[
\Big|\left\{u(\cdot,t) <\eta k \right\} \cap B_\rho\Big| \leq \left(1-\frac{\alpha}{2}\right) |B_\rho|, \quad \forall t \in (0, \delta \rho^{sp}],
\]
where $\delta \in (0,1)$ is to be determined later, after enforcing 
\begin{equation}\label{eq:mt1-est-1}
\left(\frac{\rho}{R}\right)^{\frac{sp\eps}{(p-1)(p-1+\eps)}} \left(\dashint_{I_R} \Big[\tail \left(u_-(t)\,;B_R\right)\Big]^{p-1+\eps}\d{t}\right)^{\nicefrac{1}{(p-1+\eps)}} \leq \eta k.
\end{equation}
To see this, we use an adaptation of~\eqref{d3} over the concentric cylinders $(1-\sigma)Q \subset Q$, where $Q:=B_\rho \times (0,\delta \rho^{sp}]$ and its homothety $(1-\sigma)Q:=B_{(1-\sigma)\rho} \times \left(0,\delta \left((1-\sigma)\rho\right)^{sp}\right]$, whereas $\sigma \in (0,1)$ is to be selected later as well as $\delta$, thereby getting
\begin{align}\label{eq:mt1-est-2}
&\int_{B_{(1-\sigma)\rho}\times \{t\}}w_-^p\d{x}\notag\\
& \quad \leq \int_{B_{(1-\sigma)\rho}\times \{0\}}w_-^p\d{x}+\gamma_{\mathrm{DG}}\frac{\rho^{(1-s)p}}{(\sigma \rho)^p}\int_Qw_-^p\dxt \notag\\
& \quad \quad \quad \quad \quad +\gamma_\mathrm{DG} \frac{\rho^d}{(\sigma \rho)^{d+sp}}\int_Q w_-(x,t)\d{x}\Big[\tail\left(w_-(t)\,;B_\rho \right)\Big]^{p-1}\d{t} \notag\\
& \quad =:(\mathrm{I})+(\mathrm{II})+(\mathrm{III})
\end{align}
with denoting $w_-:=(u-k)_-$ shortly. Here we have discarded two fractional integrals on the left side of~\eqref{d3}. Firstly, we turn our attention to the right side of~\eqref{eq:mt1-est-2}. The estimates of $(\mathrm{I})$ and $(\mathrm{II})$ are quite standard. Indeed, due to the measure theoretical assumption at the initial time $t_0=0$ and the nonnegativity of $u$ in $\cQ$, we have
\[
(\mathrm{I}) \leq k^p\Big|\left\{u(\cdot,0) <\eta k \right\} \cap B_\rho\Big| \leq (1-\alpha)k^p|B_\rho|,
\]
whereas a straightforward computation gives
\[
(\mathrm{II})\leq \gamma_{\mathrm{DG}}\frac{\delta}{\sigma^p}k^p|B_\rho|.
\]
The next task is to estimate the tail term $(\mathrm{III})$. As seen in Step 2 in the proof of Lemma~\ref{Lm:DGtype}, via routine computations we can split it into two terms:
\[
\Big[\tail\left(w_-(t)\,;B_\rho \right)\Big]^{p-1} \leq C k^{p-1}+C \left(\frac{\rho}{R}\right)^{sp}\Big[\tail\left(u_-(t)\,;B_R \right)\Big]^{p-1}
\]
for a constant $C(d,s,p)<\infty$. Therefore, there holds that
\begin{align*}
(\mathrm{III})&\leq c \frac{\rho^d}{(\sigma \rho)^{d+sp}}\int_Q w_-(x,t)\d{x} \left[k^{p-1}+\left(\frac{\rho}{R}\right)^{sp}\Big[\tail\left(u_-(t)\,;B_R \right)\Big]^{p-1}\right]\d{t} \\
&\leq c\sigma^{-(d+sp)}\left[k^p\delta|B_\rho|+\frac{k|B_\rho|}{R^{sp}}\int_0^{\delta\rho^{sp}}\Big[\tail\left(u_-(t)\,;B_R \right)\Big]^{p-1}\d{t}\right],
\end{align*}
where $c$ depends only on $\data$. A combination of straightforward computation and H\"{o}lder's inequality with exponents $\left(\frac{p-1+\eps}{p-1},\frac{p-1+\eps}{\eps}\right)$, provide that
\begin{align*}
&\frac{k|B_\rho|}{R^{sp}}\int_0^{\delta\rho^{sp}}\Big[\tail\left(u_-(t)\,;B_R \right)\Big]^{p-1}\d{t} \\
& \quad \leq \gamma(p,\eps)k|B_\rho| \delta^{\frac{\eps}{p-1+\eps}}\left(\frac{\rho}{R}\right)^{\frac{\eps sp}{p-1+\eps}}\left(\dashint_{I_R}\Big[\tail\left(u_-(t)\,;B_R \right)\Big]^{p-1+\eps}\d{t}\right)^{\frac{p-1}{p-1+\eps}}.
\end{align*}
All in all, if we enforce~\eqref{eq:mt1-est-1} then we can merge as
\[
(\mathrm{III}) \leq c\sigma^{-(d+sp)}\left(\delta+\delta^{\frac{\eps}{p-1+\eps}}\right)k^p|B_\rho|.
\]

Next, we will estimate the left side of~\eqref{eq:mt1-est-2}. Set $A_{k,\rho}(t):=\left\{u(\cdot,t)<k\right\} \cap B_\rho$ for $k>0$. Since, by $\left|B_\rho \setminus B_{\sigma \rho}\right| \leq d\sigma \left|B_\rho\right|$,
\begin{align*}
\left|A_{\eta k, \rho}(t)\right|&=\left|A_{\eta k, (1-\sigma)\rho}(t) \cup \left(A_{\eta k, \rho}(t) \setminus A_{\eta k, (1-\sigma)\rho}(t)\right)\right|\\
&\leq \left|A_{\eta k, (1-\sigma)\rho}(t) \right|+d\sigma \left|B_\rho\right|
\end{align*}
we have that
\begin{align*}
\int_{B_{(1-\sigma)\rho}\times \{t\}}w_-^p\d{x} 
&\geq \left|A_{\eta k, (1-\sigma)\rho}(t)\right|(1-\eta)^pk^p
\\
&\geq \left(\left|A_{\eta k, \rho}(t)\right|-d\sigma \left|B_\rho\right|\right)(1-\eta)^pk^p.
\end{align*}
Collecting all the preceding estimates in~\eqref{eq:mt1-est-2} yields that
\begin{align*}
\left|A_{\eta k, \rho}(t)\right| &\leq \frac{1}{(1-\eta)^p}\left[(1-\alpha)+\frac{C(\delta+\delta^{\frac{\eps}{p-1+\eps}})}{\sigma^{\max\{p,d+sp\}}}\right]|B_\rho|+d\sigma|B_\rho|\\
&\leq \frac{1}{(1-\eta)^p}\left[(1-\alpha)+\frac{c\delta^{\frac{\eps}{p-1+\eps}}}{\sigma^{d+p}}+d\sigma\right]|B_\rho|
\end{align*}
for a constant $C\equiv C(d,s,p,\eps)<\infty$. The free parameters now are ready to be specified. In order, these are selected to satisfy
\[
\sigma=\frac{\alpha}{8d}, \quad \frac{c\delta^{\frac{\eps}{p-1+\eps}}}{\sigma^{d+p}} \leq \frac{\alpha}{4},\quad \textrm{and}\quad \eta=\frac{\alpha}{8p} \leq \frac{\alpha}{8p(1-\frac{\alpha}{2})} \leq 1-\left(\frac{1-\frac{5}{8}\alpha}{1-\frac{1}{2}\alpha}\right)^{\nicefrac{1}{p}}.
\]
Remark that this specifies the choice of $\delta=\delta_0 \alpha^{(d+p+1)\frac{p-1+\eps}{\eps}}$ for a constant $\delta_0$ depending only on $d,s,p$ and $\eps$. 
As a consequence, we obtain
\[
\left|A_{\eta k, \rho}(t)\right| \leq \left(1-\frac{\alpha}{2}\right)|B_\rho|, \quad \forall t \in (0,\delta\rho^{sp}],
\]
which implies the desired result. 
\end{proof}

Next, we deduce the so-called ``Measure shrinking lemma'', which is known as a indispensable tool of the regularity theory of parabolic $p$-Laplace equations. In~\cite[Lemma 4.2]{Nak23}, it is established  successfully, using the fractional discrete De Giorgi's type inequality~\cite[Lemma 2.1]{Nak23}. However, as mentioned in~\cite[Lemma 3.4]{Lia24a}, its proof would be much simpler, compared with conventional ones. So, we follow such a new approach that avoids the fractional discrete De Giorgi's type inequality.

\begin{lemma}\label{Lm:shrinking}
Let $u \in \mathbf{PDG}_-^{s,p}(\Omega_T, \gamma_{\mathrm{DG}}, \eps)$ in the sense of Definition~\ref{def of DG} be such that $u \geq 0$ in $\cQ_R \subset \Omega_T$ and let $k>0$ and $\alpha \in (0,1]$. Suppose, for some $\delta \in (0,1)$ and $\sigma \in (0,\nicefrac{1}{2})$, that
\begin{equation}\label{eq:shrinking-est-0}
\Big|\left\{u(\cdot,t) \geq k \right\} \cap B_\rho(x_0)\Big| \geq \alpha |B_\rho| \quad \mbox{for all\,\, $t \in (t_0-\delta\rho^{sp},t_0]$}.
\end{equation}
There exists $C<\infty$ depending only on $\data$ and independent of $\alpha$ and $\delta$, such that if for some $\sigma \in (0, \nicefrac{1}{2})$
\[
\left(\frac{\rho}{R}\right)^{\frac{sp\eps}{(p-1)(p-1+\eps)}} \left(\dashint_{I_R(t_0)} \Big[\tail \left(u_-(t)\,;B_R(x_0)\right)\Big]^{p-1+\eps}\d{t}\right)^{\nicefrac{1}{(p-1+\eps)}} \leq \sigma k
\]
happens, then
\[
\Big|\left\{ u \leq \sigma k \right\} \cap Q_{\rho,\tau}(z_0) \Big| \leq C\frac{\sigma^{p-1}}{\delta \alpha}|Q_{\rho,\tau}(z_0)|
\]
holds true whenever $Q_{2\rho,2\tau}(z_0) \subset \cQ_R \Subset \Omega_T$ with $\tau=\delta \rho^{sp}$.
\end{lemma}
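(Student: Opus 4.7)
\emph{Strategy.} My plan is to exploit the nonlocal cross (``long-range mixing'') term on the left-hand side of the Caccioppoli inequality~\eqref{d3}, applied to the truncation $w:=(u-2\sigma k)_-$ on the concentric pair $Q_{\rho,\tau}(z_o)\subset Q_{2\rho,2\tau}(z_o)$. This mixing term is the integral of $w(x,t)\int_{\R^N} w_+^{p-1}(y,t)|x-y|^{-(N+sp)}\,\d y$ over $Q_{\rho,\tau}(z_o)$, with $w_+=(u-2\sigma k)_+$. By the measure hypothesis~\eqref{eq:shrinking-est-0}, $w_+(\cdot,t)\geq (1-2\sigma)k$ on a set of measure at least $\alpha|B_\rho|$ inside $B_\rho(x_o)$, so the inner kernel integral is pointwise of order $\alpha k^{p-1}\rho^{-sp}$. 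This will yield a first-moment bound on $\iint_{Q_{\rho,\tau}} w\,\dxt$, from which the claim follows by a Chebyshev step, using that $w\geq \sigma k$ on $\{u\leq \sigma k\}$.

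\emph{Key steps.} First, I would derive the pointwise lower bound on the kernel integral for $x\in B_\rho(x_o)$ and a.e. $t\in(t_o-\tau,t_o]$: denoting $E_t:=\{u(\cdot,t)\geq k\}\cap B_\rho(x_o)$, one has $|x-y|\leq 2\rho$ for $y\in E_t\subset B_\rho(x_o)$ and $|E_t|\geq \alpha|B_\rho|$, whence
\[
\int_{\R^N}\frac{w_+^{p-1}(y,t)}{|x-y|^{N+sp}}\,\d y \;\geq\; c(N,s,p)\,(1-2\sigma)^{p-1}\,\alpha\,k^{p-1}\,\rho^{-sp}.
\]
This bounds the left-hand-side cross term from below by $c\,\alpha k^{p-1}\rho^{-sp}\iint_{Q_{\rho,\tau}} w\,\dxt$. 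Second, I would estimate each term on the right-hand side of~\eqref{d3} by $C\sigma^p k^p|B_{2\rho}|$: the initial datum $\int_{B_\rho}w^p(\cdot,t_o-2\tau)\,\d x$ and the local term $\rho^{-sp}\iint_{Q_{2\rho,2\tau}}w^p\,\dxt$ are immediate from $w\leq 2\sigma k$ in $\cQ_R$; for the tail term, since $u\geq 0$ in $\cQ_R$ one has $w\leq 2\sigma k+u_-$ globally, so
\[
[\tail(w(t);B_{2\rho})]^{p-1}\leq C(\sigma k)^{p-1}+C(\rho/R)^{sp}[\tail(u_-(t);B_R)]^{p-1},
\]
and H\"{o}lder's inequality in time with exponents $\bigl((p-1+\eps)/(p-1),(p-1+\eps)/\eps\bigr)$, combined with the tail hypothesis exactly as in the proof of~\autoref{Lm:mt}, turns the nonlocal piece into a contribution of order $\sigma^p k^p\delta^{\eps/(p-1+\eps)}|B_{2\rho}|$.

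\emph{Conclusion and main obstacle.} Comparing the two bounds and using $\rho^{sp}|B_{2\rho}|\approx |Q_{\rho,\tau}|/\delta$ yields $\iint_{Q_{\rho,\tau}} w\,\dxt\leq C\sigma^p k|Q_{\rho,\tau}|/(\alpha\delta)$. Since $w\geq \sigma k$ on $\{u\leq \sigma k\}$, Chebyshev's inequality delivers the claimed bound $|\{u\leq \sigma k\}\cap Q_{\rho,\tau}|\leq C\sigma^{p-1}|Q_{\rho,\tau}|/(\alpha\delta)$. The complementary range $\sigma\in[\tfrac14,\tfrac12)$—where the factor $(1-2\sigma)^{-(p-1)}$ could blow up—is handled trivially, since there $\sigma^{p-1}/(\alpha\delta)\gtrsim 1$ while $|\{u\leq\sigma k\}\cap Q_{\rho,\tau}|\leq |Q_{\rho,\tau}|$. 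The main technical obstacle will be the tail analysis: the nonlocal tail of $w$ must be split cleanly into a purely local part of size $\sigma k$ and a far-field part governed by the $L^{p-1+\eps}$-tail of $u_-$, and the exponent $\tfrac{sp\eps}{(p-1)(p-1+\eps)}$ in the hypothesis is tuned precisely so that, after H\"{o}lder, each such contribution scales like $\sigma^p k^p$ (and never like $\sigma^{p-1}$ or worse); the target shrinking power $\sigma^{p-1}$ emerges only from the final division by $\sigma k$ in Chebyshev's step.
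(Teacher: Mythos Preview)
Your proposal is correct and follows essentially the same route as the paper: exploit the nonlocal cross term on the left of~\eqref{d3} on concentric cylinders, bound it from below via the measure hypothesis~\eqref{eq:shrinking-est-0}, and control the right-hand side using $w\le C\sigma k$ in $\cQ_R$ together with the $L^{p-1+\eps}$-tail hypothesis. The only cosmetic difference is that the paper truncates at level $\sigma k$ (so the factor $(1-\sigma)^{p-1}$ is uniformly bounded on $\sigma\in(0,\tfrac12)$ and no separate treatment of $\sigma\ge\tfrac14$ is needed), obtains the bound for $|\{u\le\tfrac12\sigma k\}\cap Q_{\rho,\tau}|$, and then adjusts constants; your choice of level $2\sigma k$ leads to the same conclusion after your trivial endgame for $\sigma\in[\tfrac14,\tfrac12)$.
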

\begin{proof}
By translation, we may let $(x_0,t_0)=(0,0)$. To begin with, set
\[
w_{\pm}:=\left(u-\sigma k\right)_\pm \quad \mbox{and} \quad Q_{\rho,\tau}:=B_\rho \times (-\tau, 0] \quad \mbox{with \,\,$\tau:=\delta\rho^{sp}$}. 
\]
We use~\eqref{d3} in the concentric cylinders $Q_{\rho, \tau} \subset Q_{2\rho,\tau}$ to obtain that
\begin{align*}
&\int_{Q_{\rho,\tau}}w_-(x,t)\d{x}\left(\int_{\R^d}\frac{w_+^{p-1}(y,t)}{|x-y|^{d+sp}}\d{y}\right)\d{t}\\
&\quad \leq \int_{B_{2\rho} \times \{0\}}w_-^p\d{x}+c\rho^{-sp}\int_{Q_{2\rho,\tau}}w_-^p\dxt \\
&\quad \quad \quad \quad +c\rho^{-sp} \int_{Q_{2\rho,\tau}}w_-\d{x}\Big[\tail\left(w_-(t)\,;B_{2\rho}\right)\Big]^{p-1}\d{t}\\
& \quad =:(\mathrm{I})+(\mathrm{II})+(\mathrm{III}).
\end{align*}
While the treatment of $(\mathrm{I})$ and $(\mathrm{II})$ are quite standard, estimating $(\mathrm{III})$ needs a careful inspection as before. Indeed, a straightforward computation yields that
\[
(\mathrm{I})+(\mathrm{II})\leq c\frac{(\sigma k)^p}{\delta \rho^{sp}}|Q_{\rho,\tau}|
\]
for a constant $c\equiv c(\data)<\infty$. The analogous arguments in the previous lemma, it holds that
\begin{align*}
(\mathrm{III}) &\leq c \Bigg[(\sigma k)^k|B_\rho|+(\sigma k)\delta^{\frac{\eps}{p-1+\eps}}|B_\rho| \\[2pt]
&\quad \quad \quad \quad \times \left(\frac{\rho}{R}\right)^{\frac{sp\eps}{(p-1+\eps)}} \left(\dashint_{I_R} \Big[\tail \left(u_-(t)\,;B_R\right)\Big]^{p-1+\eps}\d{t}\right)^{\frac{p-1}{p-1+\eps}}\Bigg].
\end{align*}
After enforcing
\[
\left(\frac{\rho}{R}\right)^{\frac{sp\eps}{(p-1)(p-1+\eps)}} \left(\dashint_{I_R} \Big[\tail \left(u_-(t)\,;B_R\right)\Big]^{p-1+\eps}\d{t}\right)^{\nicefrac{1}{(p-1+\eps)}} \leq \sigma k,
\]
we are able to obtain that
\[
(\mathrm{III}) \leq c(\sigma k)^p |B_\rho|\left[1+\delta^{\frac{\eps}{p-1+\eps}} \right] \leq c(\sigma k)^p \frac{|Q_{\rho,\tau}|}{\delta \rho^{sp}}.
\]
Merging the contents of three integrals $(\mathrm{I})$-$(\mathrm{III})$, this yields
\begin{equation}\label{eq:shrinking-est-1}
\int_{Q_{\rho,\tau}}w_-(x,t)\d{x}\left(\int_{\R^d}\frac{w_+^{p-1}(y,t)}{|x-y|^{d+sp}}\d{y}\right)\d{t} \leq c\frac{(\sigma k)^p}{\delta \rho^{sp}}|Q_{\rho,\tau}|.
\end{equation}
On the one hand, upon using the fact that $\sigma<1/2$ and $|x-y| \leq 2\rho$ for all $x, y \in B_\rho$, the left side of~\eqref{eq:shrinking-est-1} can be estimated as
 \begin{align*}
 &\int_{Q_{\rho,\tau}}w_-(x,t)\d{x}\left(\int_{\R^d}\frac{w_+^{p-1}(y,t)}{|x-y|^{d+sp}}\d{y}\right)\d{t}\\
 & \quad \geq \int_{Q_{\rho,\tau}}w_-(x,t)\1_{\{u(x,t) \leq \frac{1}{2}\sigma k\}}\d{x}\left(\int_{B_\rho} \frac{w_+^{p-1}(y,t)\1_{\{u(y,t)>k\}}}{|x-y|^{d+sp}}\d{y}\right)\d{t}\\
 & \quad \geq \int_{\{u \leq \frac{1}{2}\sigma k\} \cap Q_{\rho,\tau}}\left(\sigma k -\tfrac{1}{2}\sigma k\right)\d{x}\left(\int_{\{u(\cdot, t) \geq k\} \cap B_\rho} \frac{(k-\sigma k)^{p-1}}{(2\rho)^{d+sp}}\d{y}\right)\d{t} \\
 & \quad \geq \frac{k^p\alpha \sigma}{2^{p+d+sp}\rho^{sp}}|B_1|\left|\{u \leq \tfrac{1}{2}\sigma k\} \cap Q_{\rho,\tau} \right|,
 \end{align*}
 where to obtain the last line from the penultimate one, we used~\eqref{eq:shrinking-est-0}.
Combining the preceding estimates and, subsequently, adjusting relevant  constants properly, the proof is concluded.
 \end{proof}

\subsection{Power-Like Expansion of Positivity} Having at hands the above lemmas, we can deduce a refined expansion of positivity lemma that will propagate the measure theoretical information with a power-like dependence. So we will call it ``power-like expansion of positivity'' for short. As seen in~\cite[Theorem 1.2, Lemma 5.1]{Nak23} this can also be done through the fractional clustering lemma, at the price of forcing the restriction $sp \geq 1$. In contrast, our approach here is much simpler and holds for the full range of exponents; again we have been inspired by Liao~\cite{Lia24a, Lia24b}. 

\begin{lemma}\label{Lm:mt2}
Let $u \in \mathbf{PDG}_-^{s,p}(\Omega_T, \gamma_{\mathrm{DG}}, \eps)$  in the sense of Definition~\ref{def of DG} be such that $u \geq 0$ in $\cQ_R \subset \Omega_T$. Suppose, for some constants $k>0$ and $\alpha \in (0,1)$, that
\[
\Big|\left\{u(\cdot,t_0) \geq k \right\} \cap B_\rho(x_0)\Big| \geq \alpha |B_\rho(x_0)|.
\]
There exist constants $\eta_0 \in (0,1)$ and $\beta \in (1,\infty)$ both depending only on $\data$ and $\eps$, such that
\[
u \geq \eta_0\alpha^\beta k-\left(\frac{\rho}{R}\right)^{\frac{sp\eps}{(p-1)(p-1+\eps)}} \left(\dashint_{I_R(t_0)} \Big[\tail \left(u_-(t)\,;B_R(x_0)\right)\Big]^{p-1+\eps}\d{t}\right)^{\nicefrac{1}{(p-1+\eps)}}
\]
almost everywhere in $B_{2\rho}(x_0) \times \left(t_0+\frac{1}{2}(8\rho)^{sp}, t_0+2(8\rho)^{sp}\right]$, provided
\[
B_{4\rho}(x_0) \times (t_0,t_0+4(8\rho)^{sp}] \subset \cQ_R \subset \Omega_T.
\] 
\end{lemma}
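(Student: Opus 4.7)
The proof combines the three preceding measure-theoretical lemmas (\autoref{Lm:DGtype}, \autoref{Lm:mt}, \autoref{Lm:shrinking}) into a chain which is then iterated a universally bounded number of times. Each application of \autoref{Lm:mt} and \autoref{Lm:shrinking} comes with a tail alternative: either the tail quantity $\mathbf{T}$ dominates the threshold and the claim is trivial (the right-hand side $\eta_0\alpha^d k-\mathbf{T}$ becomes non-positive), or the tail hypothesis holds and the lemma can be applied. Without loss of generality I thus assume $\mathbf{T}<\eta_0\alpha^d k$ throughout and work purely with the measure estimates.

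\textbf{Step 1 ($\alpha$-dependent initial chain).} Apply \autoref{Lm:mt} on $B_\rho(x_o)$ starting from the base measure hypothesis at $t_o$: with $\eta=\alpha/(8p)$ and $\delta_1=\delta_0\alpha^{(N+p)(p-1+\eps)/\eps+1}$, the time-slice estimate $|\{u(\cdot,t)\geq \eta k\}\cap B_\rho|\geq (\alpha/2)|B_\rho|$ holds for every $t\in(t_o,t_o+\delta_1\rho^{sp}]$. Feed this into \autoref{Lm:shrinking} on a cylinder $Q_{\rho,\delta_1\rho^{sp}}$ lying inside this time slab; for any $\sigma\in(0,1/2)$, the fraction of the cylinder where $u\leq \sigma\eta k$ is at most $C\sigma^{p-1}/(\delta_1\alpha)$. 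Choose $\sigma$ as an explicit positive power of $\alpha$ so that this quantity falls below $\nu_0 \delta_1^q$, the smallness threshold of \autoref{Lm:DGtype}. That lemma then upgrades the shrinking bound to a pointwise estimate
\[
u \geq c_\ast \alpha^{d_\ast} k \qquad \text{a.e.\ in a sub-cylinder } Q_\ast\subset B_{\rho/2}(x_o)\times\bigl(t_o+\tfrac{1}{2}\delta_1\rho^{sp},\,t_o+\delta_1\rho^{sp}\bigr],
\]
for universal $c_\ast\in(0,1)$ and $d_\ast>1$ tracking the polynomial dependence of $\sigma,\eta,\delta_1,\nu$ on $\alpha$.

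\textbf{Step 2 (iteration with $\alpha=1$).} The pointwise bound of Step 1 is equivalent to a measure estimate with $\alpha=1$ at every time slice of $Q_\ast$. I now repeat the same three-lemma chain starting from this saturated configuration; the constants $\eta,\delta_1,\sigma,\nu$ entering at each subsequent iteration become \emph{universal} (independent of the original $\alpha$), so each chain step advances the pointwise lower bound forward by a universal fraction of $\rho^{sp}$ and enlarges the spatial ball by a universal factor, at the cost of a universal multiplicative loss on the level. Since the target region $B_{2\rho}(x_o)\times(t_o+\tfrac{1}{2}(8\rho)^{sp},t_o+2(8\rho)^{sp}]$ is only a universally bounded number of such steps away from $Q_\ast$, and the containment $B_{4\rho}(x_o)\times(t_o,t_o+4(8\rho)^{sp}]\subset\cQ_R$ ensures every intermediate cylinder is admissible, a finite composition delivers $u\geq \eta_0\alpha^d k$ on the target region, with universal $\eta_0\in(0,1)$ and $d>1$ determined entirely by Step 1. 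Reinstating the tail subtractions produced at each call of \autoref{Lm:mt} and \autoref{Lm:shrinking} yields the asserted inequality.

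\textbf{Main obstacle.} The technical core is the calibration of $\sigma$ in Step 1: it must be chosen as a single positive power of $\alpha$ so that $C\sigma^{p-1}/(\delta_1\alpha)\leq \nu_0\delta_1^q$ while the resulting level $\sigma\eta k$ retains the explicit form $c_\ast\alpha^{d_\ast}k$. This arithmetic is what pins down the exponent $d$ in the statement. Once Step 1 is calibrated, Step 2 is structurally routine since every subsequent application involves only universal constants; the single $L^{p-1+\eps}$-tail control on $u_-$ assumed at the outset propagates through the iteration because the reference level is monotone decreasing and the tail alternative can be re-opened at every stage.
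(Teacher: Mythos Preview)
Your overall architecture matches the paper's: one $\alpha$-dependent pass through the chain \autoref{Lm:mt} $\to$ \autoref{Lm:shrinking} $\to$ \autoref{Lm:DGtype}, followed by finitely many repetitions with $\alpha=1$ and universal constants to reach the target time window. You also correctly isolate the calibration of $\sigma$ as the arithmetic that produces the exponent $d$.

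The one substantive difference is \emph{where} the spatial enlargement happens. None of the three lemmas enlarges the ball: \autoref{Lm:mt} and \autoref{Lm:shrinking} preserve it, and \autoref{Lm:DGtype} shrinks it by a factor of two. The paper handles this at the very first line by the trivial inclusion
\[
\Big|\{u(\cdot,t_o)\geq k\}\cap B_{4\rho}\Big|\geq \alpha\,4^{-N}|B_{4\rho}|,
\]
so that the entire first chain is run at scale $4\rho$ and the De~Giorgi step already lands on $B_{2\rho}$; the subsequent $\alpha=1$ iterations then only push forward in time while staying on $B_{2\rho}$. In your version, Step~1 lands on $B_{\rho/2}$, and you assert in Step~2 that each iteration ``enlarges the spatial ball by a universal factor'' without saying how. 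The mechanism you need is exactly the same inclusion trick (pointwise bound on $B_r$ gives measure density with a universal $\alpha$ on $B_{4r}$, then the chain returns $B_{2r}$), but this must be invoked explicitly and checked against the admissibility constraint $B_{4\rho}\subset\cQ_R$, which caps how far you may inflate at each step. As written, the spatial expansion in your Step~2 is asserted rather than derived; inserting the inclusion argument (or, more cleanly, moving it to the start as the paper does) closes the gap.
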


\begin{proof}
For simplicity we may assume that $(x_0,t_0)=(0,0)$. By the assumption, it is clear that
\[
\Big|\left\{u(\cdot,0) \geq k \right\} \cap B_{4\rho}\Big| \geq \alpha 4^{-d} |B_{4\rho}|,
\]
and therefore, upon applying Lemma~\ref{Lm:mt}, we have
\[
\eta=\frac{\alpha}{8p}, \quad \delta=\delta_0 \alpha^{(d+p+1)\frac{p-1+\eps}{\eps}} \quad \mbox{for} \,\,\delta_0 \equiv \delta_0(\data) \in (0,1)
\]
so that
\begin{equation}\label{eq:mt2-est-1}
\Big|\left\{u(\cdot,t) \geq \eta k \right\} \cap B_{4\rho}\Big| \geq \frac{\alpha}{2}4^{-d} |B_{4\rho}|, \quad \forall t \in (0,\delta (4\rho)^{sp}],
\end{equation}
provided that
\begin{equation}\label{eq:mt2-est-2}
\left(\frac{\rho}{R}\right)^{\frac{sp\eps}{(p-1)(p-1+\eps)}} \left(\dashint_{I_R} \Big[\tail \left(u_-(t)\,;B_R\right)\Big]^{p-1+\eps}\d{t}\right)^{\nicefrac{1}{(p-1+\eps)}} \leq \eta k.
\end{equation}
For every $\bar{t} \in \left(\frac{1}{2}\delta(4\rho)^{sp}, \delta(4\rho)^{sp}\right]$ let us consider the sifted cylinder
\[
Q^{(\nicefrac{\delta}{2})}_{4\rho}(0,\bar{t}\,):=B_{4\rho} \times \left(\bar{t}-\frac{\delta}{2}(4\rho)^{sp},\,\bar{t}\,\right].
\]
By construction, it is easy to check that the inclusion $Q^{(\nicefrac{\delta}{2})}_{4\rho}(0,\bar{t}\,) \subset B_{4\rho} \times (0,\delta (4\rho)^{sp}]$. Thus, the measure theoretical information~\eqref{eq:mt2-est-1} allows us to apply Lemma~\ref{Lm:DGtype} in the sifted cylinder $Q^{(\nicefrac{\delta}{2})}_{4\rho}(0,\bar{t}\,)$, with $k$ and $\alpha$ replaced by $\eta k$ and $\frac{\alpha}{2}4^{-d}$, respectively.
\noindent Let $\nu=\nu_0\delta^q$ be obtained in Lemma~\ref{Lm:DGtype}, where $\nu_0 \in (0,1)$ and $q>1$, both depending only on $\data$ and $\eps$. Having this $\nu$ at hand, thanks to Lemma~\ref{Lm:shrinking}, we can select $\sigma$ so that
\[
C \frac{\sigma^{p-1}}{\delta \frac{\alpha}{2}4^{-d}} \leq \nu=\nu_0\delta^q \quad \textrm{with} \quad \delta=\delta_0\alpha^{(d+p+1)\frac{p-1+\eps}{\eps}},
\]
for a constant $C\equiv C(\data, \eps)<\infty$, that is,
\begin{equation}\label{eq:mt2-est-3}
\sigma=\left(\frac{\nu \delta \alpha}{4^{d+1}C}\right)^{\nicefrac{1}{(p-1)}}=\left(\frac{\nu_0 \delta_0^{q+1} \alpha^{(q+1)\left[(d+p+1)\frac{p-1+\eps}{\eps}\right]}}{4^{d+1}C}\right)^{\nicefrac{1}{(p-1)}}.
\end{equation}
Thus, we are allowed to apply Lemma~\ref{Lm:shrinking} to obtain that
\[
\Big|\left\{ u \leq \sigma \eta k \right\} \cap Q^{(\nicefrac{\delta}{2})}_{4\rho}(0,\bar{t}\,) \Big| \leq C\frac{\sigma^{p-1}}{\delta \frac{\alpha}{2}4^{-d}}\left|Q^{(\nicefrac{\delta}{2})}_{4\rho}(0,\bar{t}\,)\right|
\]
and, subsequently, appealing to Lemma~\ref{Lm:DGtype}, we have
\begin{equation}\label{eq:mt2-est-4}
u \geq \frac{1}{2}\sigma \eta k \quad \textrm{a.e. \,\,in} \,\,\, B_{2\rho} \times \left(\bar{t}-\frac{\delta}{4}(4\rho)^{sp}, \bar{t}\,\right]
\end{equation}
provided that
\begin{equation}\label{eq:mt2-est-5}
\left(\frac{\rho}{R}\right)^{\frac{sp\eps}{(p-1)(p-1+\eps)}} \left(\dashint_{I_R} \Big[\tail \left(u_-(t)\,;B_R\right)\Big]^{p-1+\eps}\d{t}\right)^{\nicefrac{1}{(p-1+\eps)}} \leq \frac{1}{2} \sigma\eta k.
\end{equation}
Since $\bar{t} \in \left(\frac{1}{2}\delta(4\rho)^{sp}, \delta(4\rho)^{sp}\right]$ is arbitrary, the above display~\eqref{eq:mt2-est-4} leads to
\begin{equation}\label{eq:mt2-est-6}
u \geq \frac{1}{2}\sigma \eta k \quad \textrm{a.e. \,\,in} \,\,\, B_{2\rho} \times \left(\frac{1}{2}\delta(4\rho)^{sp}, \delta(4\rho)^{sp}\right]
\end{equation}
and, note that by~\eqref{eq:mt2-est-5} the condition~\eqref{eq:mt2-est-2} is automatically satisfied. Thus, from now on, it suffices to demand~\eqref{eq:mt2-est-5}.

Having at hands the preceding point-wise estimate~\eqref{eq:mt2-est-6}, we can repeat the above procedure with $\alpha=1$. This means that the pop-up parameters appearing hereafter are independent of $\alpha$, and, as implemented in~\cite[Proposition 5.3]{Nak23}, they can be taken as same values. Indeed, upon using the above pointwise estimate~\eqref{eq:mt2-est-6}, we demonstrate the procedure to find that there exist small numbers $\bar{\eta}, \bar{\delta}\in (0,\nicefrac{1}{2})$, both depending only on $\data$ and independent of $\alpha$, such that if
\begin{equation}\label{eq:mt2-est-7}
\left(\frac{\rho}{R}\right)^{\frac{sp\eps}{(p-1)(p-1+\eps)}} \left(\dashint_{I_R} \Big[\tail \left(u_-(t)\,;B_R\right)\Big]^{p-1+\eps}\d{t}\right)^{\nicefrac{1}{(p-1+\eps)}} \leq \bar{\eta}\cdot \frac{1}{2} \sigma\eta k
\end{equation}
then
\begin{equation}\label{eq:mt2-est-8}
u \geq \bar{\eta}\cdot \frac{1}{2}\sigma \eta k \quad \textrm{a.e. \,\,in} \,\,\, B_{2\rho} \times \left(\frac{1}{2}\delta(4\rho)^{sp}+\frac{1}{2}\bar{\delta}(8\rho)^{sp}, \delta(4\rho)^{sp}+\bar{\delta}(8\rho)^{sp}\right].
\end{equation}
Note that~\eqref{eq:mt2-est-5} is automatically confirmed because we have enforced~\eqref{eq:mt2-est-7} and thus, we are dealing with the condition~\eqref{eq:mt2-est-7}.

Let $n \in \N_{\geq 4}$ be such that $n=\lceil 2/\bar{\delta}\,\rceil$. Repeating this procedure, we infer that
\begin{equation}\label{eq:mt2-est-9}
u \geq \bar{\eta}^n\cdot \frac{1}{2}\sigma \eta k \quad \textrm{a.e. \,\,in} \,\,\, B_{2\rho} \times \left(\frac{1}{2}\delta(4\rho)^{sp}+\frac{1}{2}\bar{\delta}(8\rho)^{sp}, \delta(4\rho)^{sp}+n\bar{\delta}(8\rho)^{sp}\right]
\end{equation}
provided that
\begin{equation}\label{eq:mt2-est-10}
\left(\frac{\rho}{R}\right)^{\frac{sp\eps}{(p-1)(p-1+\eps)}} \left(\dashint_{I_R} \Big[\tail \left(u_-(t)\,;B_R\right)\Big]^{p-1+\eps}\d{t}\right)^{\nicefrac{1}{(p-1+\eps)}} \leq \bar{\eta}^n\cdot \frac{1}{2} \sigma\eta k.
\end{equation}
As explained above, we may enforce~\eqref{eq:mt2-est-7} replaced by~\eqref{eq:mt2-est-10}. Altogether, by~\eqref{eq:mt2-est-9} and~\eqref{eq:mt2-est-3}, together with $\eta=\nicefrac{\alpha}{8p}$, we conclude that
\[
u \geq \bar{\eta}^{\lceil \nicefrac{2}{\bar{\delta}}\,\rceil}\cdot \frac{1}{2}\sigma \eta k= \frac{1}{16p}\bar{\eta}^{\lceil \nicefrac{2}{\bar{\delta}}\,\rceil}\alpha\left(\frac{\nu_0 \delta_0^{q+1} \alpha^{(q+1)\left[(d+p+1)\frac{p-1+\eps}{\eps}\right]}}{4^{d+1}C}\right)^{\nicefrac{1}{(p-1)}}k
\]
almost everywhere in $B_{2\rho} \times \left(\frac{1}{2}(8\rho)^{sp}, 2(8\rho)^{sp}\right]$ since by the inclusion
\[
\left(\frac{1}{2}(8\rho)^{sp}, 2(8\rho)^{sp}\right] \subset \left(\frac{1}{2}\delta(4\rho)^{sp}+\frac{1}{2}\bar{\delta}(8\rho)^{sp}, \delta(4\rho)^{sp}+n\bar{\delta}(8\rho)^{sp}\right].
\]
Finally, letting
\begin{equation*}
\left\{
    \begin{array}{ll}
\displaystyle \beta:=1+\frac{q+1}{p-1}(d+p+1)\frac{p-1+\eps}{\eps}>1, \\[12pt]
\displaystyle \eta_0:=\frac{1}{16p}\bar{\eta}^{\lceil \nicefrac{2}{\bar{\delta}}\,\rceil}\left(\frac{\nu_0 \delta_0^{q+1}}{4^{d+1}C}\right)^{\nicefrac{1}{(p-1)}} \in (0,1),
    \end{array}
\right.
\end{equation*}
both depending only on $\data$ and $\eps$, and independent of $\alpha$, leads to the desired conclusion.
\end{proof}

\subsection{Proof of Theorem~\ref{Thm:weakHarnack1}}\label{Sect.4.2}
\begin{proof}[Proof of Theorem~\ref{Thm:weakHarnack1}]
The proof is almost a verbatim repetition of~\cite[Proof of Theorem 1.2]{Nak23}, using Lemma~\ref{Lm:mt2} and the layer cake formula. This concludes the proof.
\end{proof}

\section{Proof of Theorem \ref{Thm:weakHarnack2}}\label{Sect.5}

\subsection{Measure theoretical lemmas reloaded}\label{Sect.5.1}
In this subsection, we refine our measure theoretical analysis to bridge the gap between the previous expansion of positivity and the nonlocal strong Harnack inequality. While Section~\ref{Sect.4.1} provides a power-like expansion of positivity, the nonlocal strong Harnack inequality requires a more robust propagation of measure. We develop ``reloaded'' versions of the measure theoretical lemmas, specifically tailored to capture the quantitative interaction between the local average of the solution and the long-range data. These lemmas serve as the final key ingredient to the proof of~Theorem \ref{Thm:weakHarnack2}, eventually, Theorem~\ref{Thm:fullHarnack}.

\begin{lemma}\label{Lm:newmt1}
Fix the cylinder $Q_\rho(z_0) \subset \cQ_R$ for $z_0=(x_0,t_0)\in \Omega_T$. Let $u \in \mathbf{PDG}_-^{s,p}(\Omega_T, \gamma_{\mathrm{DG}}, \eps)$ in the sense of Definition~\ref{def of DG} be such that $u \geq 0$ in $\cQ_R \subset \Omega_T$ and let $k>0$ be an arbitrary number. Suppose that
\[
\left(\frac{\rho}{R}\right)^{\frac{sp\eps}{(p-1)(p-1+\eps)}} \left(\dashint_{I_R(t_0)} \Big[\tail \left(u_-(t)\,;B_R(x_0)\right)\Big]^{p-1+\eps}\d{t}\right)^{\nicefrac{1}{(p-1+\eps)}} \leq k .
\]
Then, there exists a constant $C(d,s,p,\gamma_{\mathrm{DG}})<\infty$ such that
\[
\inf_{t \in (t_0-\rho^{sp},t_0]}\Big|\left\{u(\cdot,t) \leq k \right\} \cap B_\rho(x_0) \Big| \leq \frac{C k^{p-1}}{(u^{p-1})_{Q_\rho(z_0)}}|B_\rho(x_0)|.
\]
\end{lemma}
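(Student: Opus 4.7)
The plan is to exploit the interaction cross-term on the left-hand side of the Caccioppoli inequality~\eqref{d3}, coupled with an elementary dichotomy on the size of $(u^{p-1})_{Q_\rho}$. By translation I take $z_o=(0,0)$ and set $w_\pm:=(u-2k)_\pm$, so that $0\leq w_-\leq 2k$ on $B_R$ (using $u\geq 0$ there), while $w_-(x,t)\geq k$ precisely on $\{u(\cdot,t)\leq k\}\cap B_R$. I would apply~\eqref{d3} with constant level $k(t)\equiv 2k$ (whose time-derivative contribution vanishes) over the concentric cylinders $Q_{\rho,\rho^{sp}}=Q_\rho \subset Q_{2\rho,\rho^{sp}}$, assuming the mild geometric proviso that $Q_{2\rho,\rho^{sp}}\subset \cQ_R$. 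On the LHS I retain only the interaction cross-term
\[
\mathcal{L}:=\iint_{Q_\rho}w_-(x,t)\int_{\R^N}\frac{w_+^{p-1}(y,t)}{|x-y|^{N+sp}}\d{y}\dxt,
\]
and discard the (nonnegative) remaining contributions.

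For a lower bound on $\mathcal{L}$, I would first restrict the inner integral to $y\in B_\rho$, so that $|x-y|\leq 2\rho$, and then restrict the outer integration to the slice $\{u(\cdot,t)\leq k\}\cap B_\rho$, on which $w_-\geq k$. Setting $\phi(t):=|\{u(\cdot,t)\leq k\}\cap B_\rho|$ and using $\phi(t)\geq \inf_{t}\phi(t)$ to decouple the two variables yields
\[
\mathcal{L}\geq \frac{k\,\inf_{t}\phi(t)}{(2\rho)^{N+sp}}\iint_{Q_\rho}w_+^{p-1}\dyt.
\]
To bring in $u^{p-1}$ I would invoke the elementary pointwise bound $u^{p-1}\leq 2^{p-1}w_+^{p-1}+4^{p-1}k^{p-1}$ (verifiable by splitting on $\{u\leq 4k\}$ and $\{u>4k\}$), which after integrating over $Q_\rho$ gives $\iint_{Q_\rho}w_+^{p-1}\geq 2^{-(p-1)}|Q_\rho|\bigl[(u^{p-1})_{Q_\rho}-4^{p-1}k^{p-1}\bigr]$. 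A dichotomy then closes this half: either $(u^{p-1})_{Q_\rho}\leq 2\cdot 4^{p-1}k^{p-1}$, in which case the claim follows directly from $\inf_t\phi\leq|B_\rho|$; or $\iint_{Q_\rho}w_+^{p-1}\geq 2^{-p}(u^{p-1})_{Q_\rho}|Q_\rho|$.

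For the RHS of~\eqref{d3}, the two local terms involving $w_-^p$ are immediately dominated by $Ck^p|B_\rho|$ using $w_-\leq 2k$ and $|Q_\rho|=\rho^{sp}|B_\rho|$. For the tail term I would split $\bigl[\tail(w_-(t);B_{2\rho})\bigr]^{p-1}$ into the short-range piece on $B_R\setminus B_{2\rho}$, controlled by $Ck^{p-1}$ via $w_-\leq 2k$ and direct integration of the kernel, and the long-range piece on $\R^N\setminus B_R$. For the latter I would use $w_-^{p-1}\lesssim_p k^{p-1}+u_-^{p-1}$, the elementary weight estimate $\rho^{sp}/|y|^{N+sp}\leq (\rho/R)^{sp}\cdot R^{sp}/|y|^{N+sp}$ for $|y|\geq R$, and H\"older's inequality in time with exponents $\bigl(\frac{p-1+\eps}{p-1},\frac{p-1+\eps}{\eps}\bigr)$. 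The standing tail hypothesis of the lemma then absorbs the far-field contribution into $Ck^{p-1}\rho^{sp}$, assembling all three RHS terms into $\mathrm{RHS}\leq Ck^p|B_\rho|$ for some $C=C(\data)$.

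Combining the lower bound on $\mathcal{L}$ with this upper bound in the nontrivial branch of the dichotomy, and cancelling the common factors $|B_\rho|\approx \rho^N$ and $\rho^{sp}$, yields the asserted estimate $\inf_t\phi(t)\cdot (u^{p-1})_{Q_\rho}\leq C k^{p-1}|B_\rho|$. The main technical obstacle, as in the earlier measure-theoretic lemmas of this section, is the tail bookkeeping: it is precisely there that the exponent $sp\eps/[(p-1)(p-1+\eps)]$ in the lemma's hypothesis first emerges as the correct scaling so that H\"older in time combined with $L^{p-1+\eps}_{\mathrm{loc}}$ integrability of $\tail(u_-(\cdot);B_R)$ is sharp enough to subsume the long-range term into $k^{p-1}$, entirely in parallel with the treatments of~\autoref{Lm:DGtype} and~\autoref{Lm:shrinking}.
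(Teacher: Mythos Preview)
Your proof is correct and follows essentially the same approach as the paper: exploit the interaction cross-term on the left of~\eqref{d3}, decouple via the infimum in time, and convert $\iint w_+^{p-1}$ into $(u^{p-1})_{Q_\rho}$ up to a $k^{p-1}$ correction, with the RHS tail handled exactly as in~\autoref{Lm:shrinking}. The only cosmetic differences are that the paper works at level $k$ (obtaining the estimate for $\{u\leq k/2\}$ and then adjusting constants) whereas you work at level $2k$ to land directly on $\{u\leq k\}$, and that the paper carries the $-ck^{p-1}$ correction through as a negative term absorbed into the RHS rather than splitting off a separate trivial case via your dichotomy; both treatments are equivalent.
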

\begin{proof}
By translation, we may let $(x_0,t_0)=(0,0)$. Let $w_\pm:=(u-k)_\pm$ for $k>0$. Upon defining 
\[
\mathbf{I}_{E}:=\int_{Q_\rho}w_-(x,t)\d{x}\left(\int_{E} \frac{w_+^{p-1}(y,t)}{|x-y|^{d+sp}}\d{y}\right)\d{t} \quad \mbox{for $E\subset \R^d$},
\]
an exactly same argument as Lemma~\ref{Lm:shrinking} ensures that
\begin{align}\label{eq:nmt1-est-1}
\mathbf{I}_{B_\rho}+\mathbf{I}_{\R^d \setminus B_\rho}&=\int_{Q_\rho}w_-(x,t)\d{x}\left(\int_{\R^d} \frac{w_+^{p-1}(y,t)}{|x-y|^{d+sp}}\d{y}\right)\d{t} \notag\\
&\leq \int_{B_{\rho} \times \{0\}}w_-^p\d{x}+c\rho^{-sp}\int_{Q_{2\rho}}w_-^p\dxt \notag\\
&\quad \quad +\gamma_\mathrm{DG} \frac{(2\rho)^d}{\rho^{d+sp}}\int_{Q_{2\rho}} w_-(x,t)\d{x}\Big[\tail\left(w_-(t)\,;B_\rho \right)\Big]^{p-1}\d{t} \notag\\
&\leq c\frac{k^p}{\rho^{sp}}|Q_\rho|=ck^p|B_\rho|
\end{align}
for a constant $c \equiv c(d,s,p,\gamma_{\mathrm{DG}})<\infty$, where to obtain the last line we have enforced that
\[
\left(\frac{\rho}{R}\right)^{\frac{sp\eps}{(p-1)(p-1+\eps)}} \left(\dashint_{I_R} \Big[\tail \left(u_-(t)\,;B_R\right)\Big]^{p-1+\eps}\d{t}\right)^{\nicefrac{1}{(p-1+\eps)}} \leq k.
\]
Clearly, $\mathbf{I}_{\R^d \setminus B_\rho}$ is nonnegative and notice that
\[
u_+^{p-1} \leq c(p)\left[(u-k)_+^{p-1}+k^{p-1}\right] \quad \mbox{and} \quad |x-y| \leq 2\rho \quad \forall x,y \in B_\rho. 
\]
Upon using these information, together with $u \geq 0$ in $Q_\rho \subset \cQ_R$, the left-hand side is estimated as
\begin{align*}
\mathbf{I}_{B_\rho}&+\mathbf{I}_{\R^d \setminus B_\rho} \geq \mathbf{I}_{B_\rho}\\
&\geq \inf_{t \in (-\rho^{sp},0]} \int_{B_\rho}w_-(x,t)\d{x} \left(\int_{Q_\rho}\frac{w_+^{p-1}(y,t)}{(2\rho)^{d+sp}}\d{y}\right)\d{t}\\
&\geq c\inf_{t \in (-\rho^{sp},0]} \int_{B_\rho}w_-(x,t)\d{x}  \Big[(u^{p-1})_{Q_\rho}-k^{p-1}\Big] \\
& \geq c(u^{p-1})_{Q_\rho} \inf_{t \in (-\rho^{sp},0]} \int_{\{u(\cdot, t) \leq \frac{k}{2}\} \cap B_\rho}w_-(x,t)\d{x}-ck^p|B_\rho| \\
&\geq c(u^{p-1})_{Q_\rho} \inf_{t \in (-\rho^{sp},0]} \Big|\left\{u(\cdot, t) \leq \tfrac{k}{2}\right\} \cap B_\rho \Big|\cdot \tfrac{k}{2}-ck^p|B_\rho|
\end{align*}
for a constant $c\equiv c(d,s,p)<\infty$. Altogether, we have proved that
\[
\inf_{t \in (-\rho^{sp},0]} \Big|\left\{u(\cdot, t) \leq \tfrac{k}{2}\right\} \cap B_\rho \Big| \leq \frac{Ck^{p-1}}{(u^{p-1})_{Q_\rho}}|B_\rho|.
\]
Finally, we adjust the relevant constants, thereby getting the desired estimate.
\end{proof}

As a by product of the proof of Lemma~\ref{Lm:newmt1}, we are able to obtain the following:

\begin{lemma}\label{Lm:newmt2}
Fix the cylinder $Q_\rho(z_0) \subset \cQ_R$ for $z_0=(x_0,t_0)\in \Omega_T$. Let $u \in \mathbf{PDG}_-^{s,p}(\Omega_T, \gamma_{\mathrm{DG}}, \eps)$  in the sense of Definition~\ref{def of DG} be such that $u \geq 0$ in $\cQ_R \subset \Omega_T$ and let $k>0$ be an arbitrary number. Suppose that
\[
\left(\frac{\rho}{R}\right)^{\frac{sp\eps}{(p-1)(p-1+\eps)}} \left(\dashint_{I_R(t_0)} \Big[\tail \left(u_-(t)\,;B_R(x_0)\right)\Big]^{p-1+\eps}\d{t}\right)^{\nicefrac{1}{(p-1+\eps)}} \leq k .
\]
Then, there exists a positive constant $C(d,s,p,\gamma_{\mathrm{DG}})<\infty$ such that
\[
\inf_{t \in (t_0-\rho^{sp},t_0]}\Big|\left\{u(\cdot,t) \leq k \right\} \cap B_\rho(x_0) \Big| \leq \frac{C k^{p-1}}{\displaystyle \dashint_{t_0-\rho^{sp}}^{t_0}\Big[\tail \left(u_+(t)\,;B_\rho(x_0)\right)\Big]^{p-1}\d{t}}|B_\rho(x_0)|.
\]
\end{lemma}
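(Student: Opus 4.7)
The plan is to imitate the proof of Lemma~\ref{Lm:newmt1} verbatim up to the key energy estimate, and then, instead of extracting information from the local piece $\mathbf{I}_{B_\rho}$, to extract it from the nonlocal piece $\mathbf{I}_{\R^N\setminus B_\rho}$. As a first step I would reuse the Caccioppoli computation~\eqref{eq:nmt1-est-1} carried out in Lemma~\ref{Lm:newmt1} (for $w_\pm=(u-k)_\pm$ with the constant level $k(t)\equiv k$), which, under the standing tail hypothesis, gives
\[
\mathbf{I}_{B_\rho}+\mathbf{I}_{\R^N\setminus B_\rho}\;\le\;c\,k^{p}|B_\rho|,
\qquad c=c(\data).
\]
Since $\mathbf{I}_{B_\rho}\ge 0$, one already has the clean upper bound $\mathbf{I}_{\R^N\setminus B_\rho}\le c\,k^{p}|B_\rho|$.

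The core of the argument is the matching lower bound. I would restrict the $x$-integration to the sublevel set $\{u(\cdot,t)\le k/2\}\cap B_\rho$, where $(u-k)_-\ge k/2$. For any such $x$ and any $y\in\R^N\setminus B_\rho(x_o)$ the elementary inequality
\[
|x-y|\le |x-x_o|+|y-x_o|\le 2|y-x_o|
\]
converts the integration kernel into a genuine tail kernel centered at $x_o$, up to the multiplicative constant $2^{N+sp}$. Moreover, since $u_+\ge 0$ everywhere, one has the pointwise bound
\[
u_+^{p-1}\le c_p\bigl[(u-k)_+^{p-1}+k^{p-1}\bigr]\qquad\Longleftrightarrow\qquad (u-k)_+^{p-1}\ge c_p^{-1}u_+^{p-1}-k^{p-1},
\]
(splitting into $1<p<2$ and $p\ge 2$). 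Combining these two ingredients and using the elementary control $\int_{\R^N\setminus B_\rho(x_o)}|y-x_o|^{-(N+sp)}\d y\le c(N,s,p)\rho^{-sp}$ to discard the constant residue produces
\[
\mathbf{I}_{\R^N\setminus B_\rho}\;\ge\; c_1\,\frac{k}{2}\rho^{-sp}\!\int_{t_o-\rho^{sp}}^{t_o}\!\!\big|\{u(\cdot,t)\le k/2\}\cap B_\rho\big|\,\big[\tail\left(u_+(t);B_\rho(x_o)\right)\big]^{p-1}\d t \;-\; c_2\,k^{p}|B_\rho|,
\]
with $c_1,c_2$ depending only on $\data$.

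Factoring out $\inf_{t\in(t_o-\rho^{sp},t_o]}|\{u(\cdot,t)\le k/2\}\cap B_\rho|$ from the time integral, normalizing to an average, matching with the upper bound from Step~1, and finally absorbing the universal residue $c_2\,k^{p}|B_\rho|$ into the right-hand side, one obtains
\[
\inf_{t\in(t_o-\rho^{sp},t_o]}\!\!\Big|\{u(\cdot,t)\le k/2\}\cap B_\rho\Big|\;\le\;\frac{c\,k^{p-1}}{\displaystyle\dashint_{t_o-\rho^{sp}}^{t_o}\!\!\big[\tail\left(u_+(t);B_\rho(x_o)\right)\big]^{p-1}\d t}\,|B_\rho|.
\]
Renaming the level $k/2\mapsto k$ (and readjusting the tail hypothesis by a harmless factor of $2$, which only affects the constant $c$) produces precisely the statement of the lemma.

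The argument is completely routine; the only mildly delicate point is that the lower bound on $\mathbf{I}_{\R^N\setminus B_\rho}$ produces, through the numerical inequality $(u-k)_+^{p-1}\ge c_p^{-1}u_+^{p-1}-k^{p-1}$, a spurious term of the order $k^{p}|B_\rho|$ which must be absorbed into the right-hand side. The uniform control of the constant tail $\int_{\R^N\setminus B_\rho}|y-x_o|^{-(N+sp)}\d y\lesssim \rho^{-sp}$ is exactly what makes this absorption possible, and no fresh tools beyond those already introduced in the proof of Lemma~\ref{Lm:newmt1} are required, justifying the author's claim that this is a genuine by-product of the previous lemma.
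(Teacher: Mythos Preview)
Your proposal is correct and follows essentially the same route as the paper: both start from~\eqref{eq:nmt1-est-1}, discard the nonnegative local piece $\mathbf{I}_{B_\rho}$, use $|x-y|\le 2|y-x_o|$ together with $(u-k)_+^{p-1}\ge c_p^{-1}u_+^{p-1}-k^{p-1}$ to extract the tail of $u_+$ from $\mathbf{I}_{\R^N\setminus B_\rho}$, restrict $x$ to the sublevel set $\{u(\cdot,t)\le k/2\}$, and absorb the residual $k^p|B_\rho|$ term. The only cosmetic difference is the order in which the infimum in $t$ is factored out (before versus after the time integration), which is immaterial.
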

\begin{proof}
We may let $(x_0,t_0)=(0,0)$ without loss of generality. The proof departs from~\eqref{eq:nmt1-est-1}, namely, we require a careful estimate of $\mathbf{I}_{\R^d \setminus B_\rho}$ whereas $\mathbf{I}_{B_\rho} \geq 0$ is discarded. Since $|x-y| \leq 2|y|$ for all $x \in B_\rho$ and $y \in \R^d \setminus B_\rho$, upon using the estimate  $u_+^{p-1} \leq c(p)\left[(u-k)_+^{p-1}+k^{p-1}\right]$, we deduce that
\begin{align*}
\mathbf{I}_{B_\rho}&+\mathbf{I}_{\R^d \setminus B_\rho}
\geq \mathbf{I}_{\R^d \setminus B_\rho}\\
&\geq \frac{1}{2^{d+sp}}\inf_{t \in (-\rho^{sp},0]} \int_{B_\rho}w_-(x,t)\d{x} \left(\int_{-\rho^{sp}}^0\int_{\R^d \setminus B_\rho}\frac{w_+^{p-1}(y,t)}{|y|^{d+sp}}\dyt\right)\\
& \geq \frac{c(d,s,p)}{2^{d+sp}}\inf_{t \in (-\rho^{sp},0]} \int_{\{u(\cdot, t) \leq \frac{k}{2}\} \cap  B_\rho}w_-(x,t)\d{x} \\
&\quad \quad \quad\quad \quad \quad \quad\quad  \times \left(\dashint_{-\rho^{sp}}^0\Big[\tail (u_+(t) \,; B_\rho)\Big]^{p-1}\d{t}-k^{p-1}\right)\\
&\geq \frac{c(d,s,p)}{2^{d+sp}} \left[\inf_{t \in (-\rho^{sp},0]} \left|\left\{u(\cdot, t) \leq \frac{k}{2}\right\} \cap B_\rho \right|\cdot \frac{k}{2} \right.
 \\
&\quad \quad \quad\quad \quad \quad \quad\quad  \left.\times \left(\dashint_{-\rho^{sp}}^0\Big[\tail (u_+(t) \,; B_\rho)\Big]^{p-1}\d{t}\right)-k^p|B_\rho| \right].
\end{align*}
Altogether, we have shown that
\[
\inf_{t \in (-\rho^{sp},0]}\Big|\left\{u(\cdot,t) \leq \tfrac{k}{2} \right\} \cap B_\rho \Big| \leq \frac{Ck^{p-1}}{\displaystyle \dashint_{-\rho^{sp}}^{0}\Big[\tail \left(u_+(t)\,;B_\rho\right)\Big]^{p-1}\d{t}}|B_\rho|.
\]
Finally, adjusting the relevant constants conclude the proof.
\end{proof}
As a final key ingredient of Theorem~\ref{Thm:weakHarnack2}, we will deduce the following measure theoretical lemma which holds under weaker condition compared with Lemma~\ref{Lm:DGtype}. Namely, it asserts, under the measure density condition in a spatial ball at a time slice value, that the pointwise positivity still holds at later times. Notice that the assumption required here is weaker than the one in Lemma~\ref{Lm:DGtype}.

\begin{lemma}\label{Lm:weakDGtype} Let $u \in \mathbf{PDG}_-^{s,p}(\Omega_T,\gamma_{\mathrm{DG}}, \eps)$, in the sense of Definition~\ref{def of DG}, satisfy $u \geq 0$ in $\mathcal{Q}_R$ and $\eps>0$. There exist $\delta, \nu_\ast \in (0,1)$, both depending only on $\data$ and $\eps$, such that, if
\[
\left|\{u(\cdot,t_0)<k\} \cap B_{2\rho}(x_0)\right| \leq \nu_\ast \left|B_{2\rho}(x_0)\right|
\]
and
\[
\left(\frac{\rho}{R}\right)^{\frac{sp\eps}{(p-1)(p-1+\eps)}} \left(\dashint_{I_R(t_0)}\Big[\tail(u_-(t)\,; B_R(x_0))\Big]^{p-1+\eps}\d{t}\right)^{\nicefrac{1}{(p-1+\eps)}} \leq \frac{k}{4}
\]
then 
\[
u \geq \frac{k}{4} \quad \textrm{a.e.\,\,in}\,\, \,B_{\frac{\rho}{2}}(x_0) \times \left(t_0+\frac{1}{2}\delta \rho^{sp}, t_0+\delta\rho^{sp}\right],
\]
provided that $B_{2\rho}(x_0) \times \left(t_0,t_0+\delta\rho^{sp}\right] \subset \cQ_R \Subset \Omega_T$. 
\end{lemma}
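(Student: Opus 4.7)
The plan is to carry out a De Giorgi-type iteration that capitalizes on the \emph{single-time-slice} measure density at $t=t_o$. Unlike Lemma~\ref{Lm:DGtype}, where the measure information lives on the whole cylinder and feeds the iteration uniformly in time, here the smallness only bounds the \emph{initial trace} at the bottom of our working cylinder, and thus effectively contributes to one single step of the iteration. Translating so that $(x_o,t_o)=(0,0)$ and setting $\tau:=\delta\rho^{sp}$, I would introduce the shrinking family
\[
\rho_i := \tfrac{\rho}{2}+\tfrac{3\rho}{2}\cdot 2^{-i}, \qquad \sigma_i := \tfrac{\tau}{2}\bigl(1-2^{-i}\bigr), \qquad k_i := \tfrac{k}{4}+\tfrac{3k}{4}\cdot 2^{-i},
\]
and the nested cylinders $Q_i := B_{\rho_i}\times(\sigma_i,\tau]$, all sharing the top vertex $(0,\tau)$, with intermediate $\widetilde Q_i$ between $Q_{i+1}$ and $Q_i$. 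As standard, put $A_i := \{u<k_i\}\cap Q_i$, $\boldsymbol{Y}_i:=|A_i|/|Q_i|$, and keep the companion quantity $\boldsymbol{Z}_i$ used in the proof of Lemma~\ref{Lm:DGtype}.

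For each $i$, apply the energy inequality~\eqref{d2} to $(u-k_{i+1})_-$ on $Q_i$ with a cutoff $\zeta_i\equiv 1$ on $Q_{i+1}$, supported in $\widetilde Q_i$, satisfying $|D\zeta_i|\lesssim 2^i/\rho$ and $|\partial_t\zeta_i^p|\lesssim 2^i/\tau$. For $i\geq 1$, I would choose $\zeta_i$ to vanish at $t=\sigma_i>0$; then the initial-trace term in~\eqref{d2} disappears, at the cost of the $2^i/\tau$ factor coming from $\partial_t\zeta_i^p$. For $i=0$, the bottom coincides with $t_o$, and the hypothesis directly gives
\[
\int_{B_{\rho_0}}\zeta_0^p(u-k_1)_-^p(\cdot,0)\,dx \;\leq\; k^p\,\big|\{u(\cdot,0)<k\}\cap B_{2\rho}\big| \;\leq\; \nu_\ast k^p|B_{2\rho}|.
\]
This is the \emph{only} place where the measure-density assumption is used. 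Combining~\eqref{d2} with the fractional parabolic Sobolev embedding (Lemma~\ref{FS}) and the Gagliardo--Nirenberg embedding (Lemma~\ref{Lm:GN}), and absorbing the long-range contribution via the H\"older inequality with exponents $\bigl(\tfrac{p-1+\eps}{p-1},\tfrac{p-1+\eps}{\eps}\bigr)$ against the assumption $\mathbf{T}\leq k/4$ on the $L^{p-1+\eps}$-averaged tail (exactly as in Steps 2--3 of Lemma~\ref{Lm:DGtype}), I expect to obtain coupled recursions of the form
\[
\boldsymbol{Y}_{i+1} \leq C\boldsymbol{b}^i\delta^{-\gamma}\bigl(\boldsymbol{Y}_i^{1+\beta}+\boldsymbol{Y}_i^{\beta}\boldsymbol{Z}_i^{1+\lambda}\bigr)+C\nu_\ast\1_{\{i=0\}},
\]
\[
\boldsymbol{Z}_{i+1} \leq C\boldsymbol{b}^i\delta^{-\gamma'}\bigl(\boldsymbol{Y}_i+\boldsymbol{Z}_i^{1+\lambda}\bigr)+C\nu_\ast\1_{\{i=0\}},
\]
with positive exponents $\beta,\gamma,\gamma',\lambda$ depending only on $\data$ and $\eps$.

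Setting $\boldsymbol{K}_i:=\boldsymbol{Y}_i+\boldsymbol{Z}_i^{1+\lambda}$, for $i\geq 1$ this gives $\boldsymbol{K}_{i+1}\leq C'\boldsymbol{b}^i\delta^{-\gamma''}\boldsymbol{K}_i^{1+\min(\beta,\lambda)}$, precisely the form required by the fast-geometric-convergence lemma~\ref{Lm:FGC}, while $\boldsymbol{K}_1\leq C'(\boldsymbol{K}_0^{1+\min(\beta,\lambda)}+\nu_\ast)\leq C'(1+\nu_\ast)$. Then I choose first $\delta\in(0,1)$ small (depending only on $\data$ and $\eps$) so that $C'\boldsymbol{b}^{1/\min(\beta,\lambda)}\delta^{-\gamma''/\min(\beta,\lambda)^2}$ meets the convergence threshold of Lemma~\ref{Lm:FGC}, and then choose $\nu_\ast$ small enough that $\boldsymbol{K}_1$ falls below that threshold. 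Lemma~\ref{Lm:FGC} then delivers $\boldsymbol{K}_\infty=0$, i.e.\ $u\geq k/4$ a.e.\ on $Q_\infty=B_{\rho/2}\times(\tfrac12\delta\rho^{sp},\delta\rho^{sp}]$, which is exactly the conclusion.

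The main obstacle is precisely the \emph{single-shot} nature of the initial-trace smallness: since $\nu_\ast$ enters only at $i=0$, one cannot hope to refresh it at each step as in Lemma~\ref{Lm:DGtype}. One must verify that a single application of the initial-trace bound is enough to push $\boldsymbol{K}_1$ below the Lemma~\ref{Lm:FGC} threshold, after which the iteration has to proceed as a standard De Giorgi scheme using the time-cutoff trick ($\zeta_i$ vanishing at $t=\sigma_i$) to annihilate the later initial traces. The time shift in the conclusion---from $(t_o,t_o+\delta\rho^{sp}]$ down to the second half $(t_o+\tfrac12\delta\rho^{sp},t_o+\delta\rho^{sp}]$---is the unavoidable price of this time-cutoff mechanism, and is precisely what makes the iteration close uniformly in $i\geq 1$ despite the absence of further measure information beyond the slice $t=t_o$.
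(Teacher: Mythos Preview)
There is a genuine gap at the step where you claim to push $\boldsymbol{K}_1$ below the fast-geometric-convergence threshold. Your bound $\boldsymbol{K}_1 \leq C'\bigl(\boldsymbol{K}_0^{1+\min(\beta,\lambda)}+\nu_\ast\bigr) \leq C'(1+\nu_\ast)$ contains the term $C'\boldsymbol{K}_0^{1+\min(\beta,\lambda)}$, and since you have \emph{no a priori smallness} for $\boldsymbol{K}_0$ (it may well be of order $1$), this term is of order $C'$ regardless of how small you take $\nu_\ast$. Thus $\boldsymbol{K}_1$ is bounded only by a structural constant, not by something that tends to zero with $\nu_\ast$. Meanwhile your recursion for $i\geq 1$---which now carries a time cutoff and hence a factor $\delta^{-\gamma''}$---has convergence threshold $\sim(C'\delta^{-\gamma''})^{-1/\min(\beta,\lambda)}\boldsymbol{b}^{-1/\min(\beta,\lambda)^2}$, a fixed small number. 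No choice of $\nu_\ast$ (or of $\delta$) forces $\boldsymbol{K}_1$ below this, and the iteration does not close. The ``single-shot'' use of the initial slice is precisely the problem: one application of the energy estimate cannot shrink an order-$1$ quantity down to an arbitrarily small threshold.

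The paper circumvents this by a genuinely different device: it keeps the time interval \emph{fixed} as $(0,\delta\rho^{sp}]$ throughout the iteration, so that all cylinders $Q_i=B_{\rho_i}\times(0,\delta\rho^{sp}]$ share the common bottom $t=0$. Consequently the initial-trace bound
\[
\int_{B_i}(u-\widetilde{k}_i)_-^p(\cdot,0)\,dx \;\leq\; k^p\bigl|\{u(\cdot,0)<k\}\cap B_{2\rho}\bigr| \;\leq\; 2^N\nu_\ast k^p|B_i|
\]
enters at \emph{every} step $i$, not only at $i=0$. This produces a recursion carrying an inhomogeneous term $\sim(2^N\nu_\ast)^{1/p}\boldsymbol{Y}_i^{1-1/(p\kappa)}$ at each step. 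The paper then argues by dichotomy: either $\boldsymbol{Y}_{i_0}\leq\nu_\ast$ for some $i_0$, in which case Lemma~\ref{Lm:DGtype} applied on $Q_{i_0}$ finishes the job; or $\boldsymbol{Y}_i>\nu_\ast$ for all $i$, in which case $\nu_\ast^{1/p}<\boldsymbol{Y}_i^{1/p}$ allows the inhomogeneous term to be absorbed as $\boldsymbol{Y}_i^{1+\beta}$, reducing to a pure homogeneous recursion. The crucial point is that the single-time-slice hypothesis is \emph{recycled at every step} through the fixed bottom; your scheme of shrinking the time interval discards this information after a single use, which is exactly why the iteration cannot be jump-started.
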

\begin{proof}
The proof is similar to that of Lemma~\ref{Lm:DGtype}. We may let $(x_0,t_0)=(0,0)$ for simplicity. 
We split the argument into several steps.

\smallskip

\emph{Step 1: Set up.} We set preparations as follows: Let us define shrinking families of cylinders $Q_i:=B_i \times (0,\delta\rho^{sp}]$ and $\widetilde{Q}_i:=\widetilde{B}_i \times (0,\delta\rho^{sp}]$ for $i \in \N_0$, where 
\[
\rho_i:=(1+2^{-i})\rho, \quad \widetilde{\rho}_i:=\frac{1}{2}(\rho_i+\rho_{i+1})
\]
and we shortened $B_i:=B_{\rho_i}$ and $\widetilde{B}_i:=B_{\widetilde{\rho}_i}$. Here, $\delta \in (0,1]$ is to be chosen later. It is clear that these cylinders satisfy $Q_{i+1} \subset \widetilde{Q}_i \subset Q_i$ for every $i \in \N_0$. For $k>0$ to be fixed, we define decreasing sequences of levels in the following:
\[
k_i:=\frac{1}{2}k+\frac{1}{2^{i+1}}k, \quad \widetilde{k}_i:=\frac{1}{2}(k_i+k_{i+1}).
\]
Finally, set
\[
A_i:=\left\{u<k_i\right\} \cap Q_i, \quad \boldsymbol{Y}_{\!\!i}:=\frac{|A_i|}{|Q_i|} \leq 1, \quad \widetilde{w}:=(u-\widetilde{k}_i)_-.
\]
With these preparations, having at hand~\eqref{eq:dg-est-1}, the proof departs from~\eqref{eq:dg-est-d}. This time we use~\eqref{d3} over the cylinders $\widetilde{Q}_i$ and $Q_i$. Let $\varphi \in C^\infty(Q_i; [0,1])$ be the same cutoff function as in the proof of Lemma~\ref{Lm:DGtype}, Step 2. Using~\eqref{d3}, a straightforward computation gives that 
\begin{align}\label{eq:weakDGtype-est-1}
&\rho^{sp} \int_{\widetilde{T}_i}\int_{\widetilde{B}_i}\int_{\widetilde{B}_i} \frac{\left|\varphi \widetilde{w}(x,t)-\varphi \widetilde{w}(y,t)\right|^p}{|x-y|^{d+sp}}\dxt \notag\\
&\quad \leq \rho^{sp}\int_{B_i}\widetilde{w}^p(\cdot, 0)\d{x} +c2^{ip}\int_{Q_i}\widetilde{w}^p\dxt+c2^{ip}\int_{Q_i}\widetilde{w}\d{x}\Big[\tail(\widetilde{w}(t)\,; B_i)\Big]^{p-1}\d{t} \notag\\
& \quad =:(\mathrm{O})+(\mathrm{I})+(\mathrm{II})
\end{align}
with the obvious meaning of $(\mathrm{O})$, $(\mathrm{I})$ and $(\mathrm{II})$. By the similar argument leading to~\eqref{eq:dg-est-2}, we have that
\begin{equation*}
\sup_{t \in (0,\delta\rho^{sp}]}\dashint_{\widetilde{B}_i}(\varphi \widetilde{w})^p(t)\d{x} \leq \frac{c2^{ip}}{|B_i|}\rho^{-sp}\left[(\mathrm{O})+(\mathrm{I})+(\mathrm{II})\right].
\end{equation*}
Combining this estimate with~\eqref{eq:dg-est-1} and~\eqref{eq:weakDGtype-est-1}, we deduce that
\begin{align}\label{eq:weakDGtype-est-2}
\frac{k}{2^{i+3}}\left|A_{i+1}\right| &\leq c \left[2^{i(d+sp)}\left[(\mathrm{O})+(\mathrm{I})+(\mathrm{II})\right]\right]^{\nicefrac{1}{p\kappa}} \notag\\
& \quad \quad \times\left[\frac{2^{ip}\rho^{-sp}}{|B_i|}\left[(\mathrm{O})+(\mathrm{I})+(\mathrm{II})\right]\right]^{\frac{1}{p\kappa}\cdot \frac{\kappa_\ast-1}{\kappa_\ast}}|A_i|^{1-\nicefrac{1}{p\kappa}} \notag\\
& \leq c2^{i\frac{d+sp}{p}} \left(\frac{\rho^{-sp}}{|B_i|}\right)^{\nicefrac{1}{p\kappa}\cdot \frac{\kappa_\ast-1}{\kappa_\ast}}\Big[(\mathrm{O})+(\mathrm{I})+(\mathrm{II})\Big]^{\nicefrac{1}{p}}|A_i|^{1-\nicefrac{1}{p\kappa}},
\end{align}
where $c =c(d,s,p,\gamma_{\mathrm{DG}})<\infty$. To obtain the last inequality we used $1+\frac{\kappa_\ast-1}{\kappa_\ast}=\kappa$. The two integrals $(\mathrm{I})$ and $(\mathrm{II})$ can be estimated analogously to the proof of Lemma~\ref{Lm:DGtype}, while for the first one $(\mathrm{O})$ requires careful inspections. Indeed, carrying out an analogous argument to the proof of Lemma~\ref{Lm:DGtype}, we obtain that
\begin{equation*}
(\mathrm{I}) \leq \frac{k^p}{\delta}|A_i| \quad \mbox{and} \quad 
(\mathrm{II}) \leq ck^p|A_i|+c\rho^{\frac{(p-1)sp}{p-1+\eps}}k^p \left(\int_0^{\delta\rho^{sp}}|A_i(t)|^{\frac{p-1+\eps}{\eps}}\d{t}\right)^{\nicefrac{\eps}{(p-1+\eps)}}
\end{equation*}
for a constant $c\equiv c(\data)<\infty$, where to shorten the notation, we denoted
\[
A_i(t):=\{u(\cdot,t)<k_i\} \cap B_i \quad \mbox{for\,\, $t \in [0,\delta\rho^{sp}]$}, 
\]
and enforced that
\begin{equation}\label{eq:weakDGtype-est-3}
\left(\frac{\rho}{R}\right)^{\frac{sp\eps}{(p-1)(p-1+\eps)}} \left(\dashint_{I_R}\Big[\tail(u_-(t)\,; B_R)\Big]^{p-1+\eps}\d{t}\right)^{\nicefrac{1}{(p-1+\eps)}} \leq \frac{k}{4}.
\end{equation}
Next, we turn our attention to $(\mathrm{O})$. Since $\rho < \rho_i \leq 2\rho$ and $\widetilde{k}_i \leq k$ for any $i \in \N_0$, we observe, by the measure theoretical assumption, that
\begin{align*}
|A_i(0)|=\left|\left\{u(\cdot,0) <k_i \right\} \cap B_i\right| &\leq \left|\left\{u(\cdot,0) <k \right\} \cap B_{2\rho}\right|\\
&\leq \nu_\ast\left|B_{2\rho}\right| \\
&\leq \nu_\ast2^d\left|B_i\right|=\frac{2^d\nu_\ast}{\delta\rho^{sp}}|Q_i|,
\end{align*}
where $\nu_\ast \in (0,1)$ is still stipulated later. Therefore, this allows us to estimate $(\mathrm{O})$ as follows:
\begin{equation*}
(\mathrm{O})=\rho^{sp}\int_{B_i}\widetilde{w}^p(\cdot, 0)\d{x} \leq \rho^{sp} k^p|A_i(0)| \leq k^p\frac{2^d\nu_\ast}{\delta}|Q_i|.
\end{equation*}
After joining the preceding estimates of $(\mathrm{O})$--$(\mathrm{II})$ with~\eqref{eq:weakDGtype-est-2}, we divide the both side of the resulting inequality by $|Q_{i+1}|$ to find that
\begin{align}\label{eq:weakDGtype-est-4}
\frac{\left|A_{i+1}\right|}{\left|Q_{i+1}\right|} &\leq c 4^{i(d+sp)} \frac{|Q_i|^{1+\nicefrac{1}{p}-\nicefrac{1}{p\kappa}}}{\left|Q_{i+1}\right|}\left(\frac{\rho^{-sp}}{|B_i|}\right)^{\frac{1}{p\kappa}\cdot \frac{\kappa_\ast-1}{\kappa_\ast}}|A_i|^{1-\nicefrac{1}{p\kappa}}\notag \\[4pt]
& \quad \quad \quad \quad \quad \times \left[\left(\frac{2^d\nu_\ast}{\delta}\right)^{\nicefrac{1}{p}}\frac{1}{\left|Q_{i}\right|^{1-\nicefrac{1}{p\kappa}}}+\frac{\left|A_{i}\right|^{\nicefrac{1}{p}}}{\delta^{\nicefrac{1}{p}}\left|Q_{i}\right|^{1+\nicefrac{1}{p}-\nicefrac{1}{p\kappa}}} \right. \notag\\[4pt]
&\left. \quad \quad \quad \quad \quad \quad \quad  \quad \quad \quad \quad+\frac{\rho^{\frac{(p-1) s}{p-1+\eps}}}{\left|Q_i\right|^{1+\nicefrac{1}{p}-\nicefrac{1}{p\kappa}}}\left(\int_0^{\delta \rho^{sp}}|A_i(t)|^{\frac{p-1+\eps}{\eps}}\d{t}\right)^{\frac{\eps}{p(p-1+\eps)}}\right] \notag\\[4pt]
&\leq c 4^{i(d+sp)} \frac{|Q_i|^{1+\nicefrac{1}{p}-\nicefrac{1}{p\kappa}}}{\left|Q_{i+1}\right|}\left(\frac{\rho^{-sp}}{|B_i|}\right)^{\frac{1}{p\kappa}\cdot \frac{\kappa_\ast-1}{\kappa_\ast}}\frac{1}{\delta^{\nicefrac{1}{p}}}\notag \\[4pt]
& \quad \quad \quad \quad \quad \times \left[(2^d\nu_\ast)^{\nicefrac{1}{p}}\left(\frac{\left|A_{i}\right|}{\left|Q_{i}\right|} \right)^{1-\nicefrac{1}{p\kappa}}+\left(\frac{\left|A_{i}\right|}{\left|Q_{i}\right|} \right)^{1+\nicefrac{1}{p}-\nicefrac{1}{p\kappa}} \right.\notag\\[4pt]
&\left.\quad \quad \quad \quad \quad \quad \quad \quad \quad \quad \quad +\left(\frac{\left|A_{i}\right|}{\left|Q_{i}\right|} \right)^{1-\nicefrac{1}{p\kappa}}\frac{\rho^{\frac{(p-1) s}{p-1+\eps}}}{\left|Q_i\right|^{\nicefrac{1}{p}}}\left(\int_0^{\delta \rho^{sp}}|A_i(t)|^{\frac{p-1+\eps}{\eps}}\d{t}\right)^{\frac{\eps}{p(p-1+\eps)}}\right]\notag\\[4pt]
&\leq c 4^{i(d+sp)} \delta^{-\nicefrac{1}{p\kappa}}\left[(2^d\nu_\ast)^{\nicefrac{1}{p}}\left(\frac{\left|A_{i}\right|}{\left|Q_{i}\right|} \right)^{1-\nicefrac{1}{p\kappa}}+\left(\frac{\left|A_{i}\right|}{\left|Q_{i}\right|} \right)^{1+\nicefrac{1}{p}-\nicefrac{1}{p\kappa}} \right.\notag\\[4pt]
&\left.\quad \quad \quad \quad \quad  +\left(\frac{\left|A_{i}\right|}{\left|Q_{i}\right|} \right)^{1-\nicefrac{1}{p\kappa}}\delta^{-\frac{p-1}{p(p-1+\eps)}} \left[\dashint_{0}^{\delta \rho^{sp}}\left(\frac{\left|A_i(t)\right|}{\left|B_i\right|}\right)^{\frac{p-1+\eps}{\eps}}\d{t}\right]^{\frac{\eps}{p(p-1+\eps)}}\right],
\end{align}
where to obtain the last line from the penultimate one, as seen before, we used that 
\[
\frac{|Q_i|^{1+\nicefrac{1}{p}-\nicefrac{1}{p\kappa}}}{\left|Q_{i+1}\right|}\left(\frac{\rho^{-sp}}{|B_i|}\right)^{\frac{1}{p\kappa}\cdot \frac{\kappa_\ast-1}{\kappa_\ast}} =\frac{\left|Q_i\right|}{\left|Q_{i+1}\right|} \left(\frac{\left|Q_i\right|}{\left|B_i\right|\rho^{sp}}\right)^{\nicefrac{1}{p}-\nicefrac{1}{p\kappa}} \leq c(d)\delta^{\nicefrac{1}{p}-\nicefrac{1}{p\kappa}}
\]
and 
\begin{align*}
\frac{\rho^{\frac{(p-1) s}{p-1+\eps}}}{\left|Q_i\right|^{\nicefrac{1}{p}}}\left(\int_{0}^{\delta \rho^{sp}}|A_i(t)|^{\frac{p-1+\eps}{\eps}}\d{t}\right)^{\frac{\eps}{p(p-1+\eps)}} \leq c\delta^{-\frac{p-1}{p(p-1+\eps)}} \left[\dashint_0^{\delta \rho^{sp}}\left(\frac{\left|A_i(t)\right|}{\left|B_i\right|}\right)^{\frac{p-1+\eps}{\eps}}\d{t}\right]^{\frac{\eps}{p(p-1+\eps)}}
\end{align*}
for a constant $c\equiv c(d,s,p)<\infty$. 

\smallskip

\emph{Step 2: Alternative argument.} In this step, we consider two cases. The first case is supposing, for some $i_0 \in \N_0$, that
\[
\frac{|A_{i_0}|}{|Q_{i_0}|}=\frac{\left|\left\{u<k_{i_0}\right\} \cap Q_{i_0}\right|}{|Q_{i_0}|} \leq \nu_\ast.
\]
This in turn gives
\[
\left|\left\{u <\tfrac{1}{2}k\right\} \cap Q_\rho^{\delta}\right| \leq \left|\left\{u \leq k_{i_0}\right\} \cap Q_{i_0}\right| \leq \nu_\ast\left|Q_{i_0}\right|  \leq 2^d\nu_\ast|Q_\rho^\delta|
\]
with setting $Q_\rho^\delta:=B_\rho \times (0,\delta\rho^{sp}]$ because $\rho<\rho_{i_0} \leq 2\rho$ and $k_{i_0} \geq \frac{1}{2}k$. We keep in mind that $Q_\rho^\delta$ is rewritten as a shifted cylinder, i.e., $Q_\rho^\delta=Q_{\rho,\tau}(0,\delta\rho^{sp})$ with $\tau=\delta \rho^{sp}$. Let $\nu_0 \in (0,1)$ and $q>1$ be parameters determined in Lemma~\ref{Lm:DGtype}, both depending only on $\data$ and $\eps$. Having at hand $\delta$ temporarily, we choose $\nu_\ast$ small enough, such that
\begin{equation}\label{eq:weakDGtype-est-nu}
\nu_\ast \leq 2^{-d}\nu_0\delta^{q},
\end{equation}
thereby getting
\begin{equation}\label{eq:weakDGtype-est-5}
\left|\left\{u <\frac{1}{2}k\right\} \cap Q_\rho^{\delta}\right| \leq \nu_0 |Q_\rho^\delta|.
\end{equation}
Thus, by~\eqref{eq:weakDGtype-est-3} and~\eqref{eq:weakDGtype-est-5}, we are allowed to apply Lemma~\ref{Lm:DGtype} with the center $(0,\delta\rho^{sp})$, and therefore
\[
u \geq \frac{k}{4} \quad \mbox{a.e.\,\,in}\,\,\,B_{\frac{\rho}{2}}\times \left(\frac{\delta}{2}\rho^{sp}, \rho^{sp}\right].
\]
Note that this case does not require any quantitative characterization of $\delta$, which is still to be specified. On the other hand, we suppose, for every $i \in \N_0$, that
\begin{equation}\label{eq:weakDGtype-est-6}
\frac{|A_i|}{|Q_i|}=\frac{\left|\left\{u<k_i\right\} \cap Q_i\right|}{|Q_i|} > \nu_\ast.
\end{equation}
Set
\[
\boldsymbol{Z}_i:=\left[\dashint_0^{\delta \rho^{sp}}\left(\frac{\left|A_i(t)\right|}{\left|B_i\right|}\right)^{\frac{p-1+\eps}{\eps}}\d{t}\right]^{\frac{\eps}{(1+\lambda)(p-1+\eps)}} \quad \mbox{with} \quad \lambda=\frac{\eps sp}{d(p-1+\eps)}.
\]
After inserting~\eqref{eq:weakDGtype-est-6} into~\eqref{eq:weakDGtype-est-4}, rearranging gives
\begin{align*}
\boldsymbol{Y}_{\!i+1} &\leq c \boldsymbol{b}^i \delta^{-\frac{1}{p\kappa}-\frac{p-1}{p(p-1+\eps)}}\left(\boldsymbol{Y}_{\!i}^{1+\beta}+\boldsymbol{Y}_{\!i}^{\beta} \cdot \boldsymbol{Y}_{\!i}^{\frac{p-1}{p}}\boldsymbol{Z}_i^{\frac{1+\lambda}{p}}\right) \\
&\leq c \boldsymbol{b}^i \delta^{-\frac{1}{p\kappa}-\frac{p-1}{p(p-1+\eps)}}\left(\boldsymbol{Y}_{\!i}^{1+\beta}+ \boldsymbol{Y}_{\!i}^{\beta}\boldsymbol{Z}_i^{1+\lambda}\right)
\end{align*}
for a constant $c(d,s,p,\gamma_{\mathrm{DG}})$ with $\boldsymbol{b}=4^{d+sp}$ and $\beta:=\frac{1}{p}-\frac{1}{p\kappa}>0$. Here in the penultimate line we used $\boldsymbol{Y}_{\!i}^{\frac{p-1}{p}} \leq \boldsymbol{Z}_i^{\frac{p-1}{p}(1+\lambda)}$, which follows from H\"{o}lder's inequality.

Carrying out the above procedure, together with a complete similar argument leading to~\eqref{eq:dg-est-9}, we deduce that
\[
\boldsymbol{Z}_{i+1} \leq c\boldsymbol{b}^i \delta^{-\frac{\eps d}{d(p-1+\eps)+\eps sp}-1} \left(\boldsymbol{Y}_{\!i}+\boldsymbol{Z}_i^{1+\lambda}\right).
\]
As before, let
\[
\boldsymbol{K}_i:=\boldsymbol{Y}_{\!i}+\boldsymbol{Z}_i^{1+\lambda}, \quad \gamma:=\max \left\{\frac{1}{p\kappa}+\frac{p-1}{p(p-1+\eps)},\,\frac{\eps d}{d(p-1+\eps)+\eps sp}+1\right\}.
\]
Then, due to the argument as in~\cite[Chapter I.4, Lemma 4.2]{DiB93}, it follows from the previous two estimates that
\[
\boldsymbol{K}_{i+1} \leq 2c^{1+\lambda} \delta^{-\gamma(1+\lambda)}\boldsymbol{b}^{(1+\lambda)i}\boldsymbol{K}_i^{1+\min\{\beta, \lambda\}}
\]
for positive constants $\boldsymbol{b}=4^{d+sp}$ and $c>1$ depending only on $\data$. By Lemma~\ref{Lm:FGC}, we can find a positive number $\delta \in (0,1)$ depending only on $\data$, so that $\boldsymbol{K}_{\!\infty}=\lim_{i \to \infty}\boldsymbol{K}_{\!i}=0$ provided that
\[
\boldsymbol{K}_0 \leq \nu_0 \quad \Longrightarrow \quad |A_0|=\left|\left\{u<k_0\right\} \cap Q_0\right|\leq \nu_0|Q_0|,
\]
where $\nu_0 \in (0,1)$ is determined in Lemma~\ref{Lm:DGtype}. Indeed, we can specify $\delta \in (0,1)$ such that
\begin{align*}
\boldsymbol{K}_0 &\leq \left(2c^{1+\lambda} \delta^{-\gamma(1+\lambda)}\right)^{-\frac{1}{\min\{\beta, \lambda\}}}\boldsymbol{b}^{-\frac{1+\lambda}{\min\{\beta ,\lambda\}^2}} \leq \nu_0 \\
 &\Longleftarrow \quad \delta=\frac{1}{3}\nu_0^{\frac{\min\{\beta, \lambda\}}{\gamma(1+\lambda)}} (2c^{1+\lambda})^{\frac{1}{\gamma(1+\lambda)}}\boldsymbol{b}^{\frac{1}{\gamma \min\{\beta,\lambda\}}}.
\end{align*}
Once $\delta$ is specified in such a way, we can fix $\nu_\ast$ via~\eqref{eq:weakDGtype-est-nu} and thus $\nu_\ast$ also depends only on the $\data$. As a consequence, we have that
\[
\boldsymbol{K}_{\!\infty}=0 \quad \Longrightarrow \quad u \geq \frac{k}{2} \quad \mbox{a.e. in $B_{\frac{\rho}{2}} \times (0, \delta \rho^{sp}]$}.
\]
Finally, joining both cases yields the desired claim.
\end{proof}

\subsection{Proof of Theorem~\ref{Thm:weakHarnack2}}\label{Sect.5.2}

To begin with, let $(x_0,t_0)=(0,0)$ for simplicity. Lemmas~\ref{Lm:newmt1} and~\ref{Lm:newmt2} (with redefining $k$) imply that
\[
\displaystyle \inf\limits_{t \in (-(2\rho)^{sp},0]}\Big|\left\{u(\cdot,t) \leq k \right\} \cap B_{2\rho} \Big| \leq \left[\frac{C_1 k^{p-1}}{(u^{p-1})_{Q_{2\rho}}} \wedge \frac{C_2 k^{p-1}}{\displaystyle \dashint_{-(2\rho)^{sp}}^{0}\Big[\tail \left(u_+(t)\,;B_{2\rho}\right)\Big]^{p-1}\d{t}}\right]|B_{2\rho}|
\]
for a constant $C_1, C_2(d,s,p,\gamma_{\mathrm{DG}})<\infty$, provided that
\begin{equation}\label{eq:weakHarnack3-est-1}
\left(\frac{\rho}{R}\right)^{\frac{sp\eps}{(p-1)(p-1+\eps)}} \left(\dashint_{I_R} \Big[\tail \left(u_-(t)\,;B_R\right)\Big]^{p-1+\eps}\d{t}\right)^{\nicefrac{1}{(p-1+\eps)}} \leq k.
\end{equation}
Let $\nu \in (0,1)$ be determined in Lemma~\ref{Lm:DGtype}. We then select $k>0$ so that
\begin{align*}
&\left[\frac{C_1}{(u^{p-1})_{Q_{2\rho}}} \wedge \frac{C_2}{\displaystyle \dashint_{-(2\rho)^{sp}}^{0}\Big[\tail \left(u_+(t)\,;B_{2\rho}\right)\Big]^{p-1}\d{t}}\right]k^{p-1} \leq \nu \\[4pt]
\Longrightarrow k =\frac{1}{2} &\left[\left(\frac{\nu}{C_1}(u^{p-1})_{Q_{2\rho}}\right)^{\nicefrac{1}{(p-1)}}+\left(\frac{\nu}{C_2}\dashint_{-(2\rho)^{sp}}^{0}\Big[\tail \left(u_+(t)\,;B_{2\rho}\right)\Big]^{p-1}\d{t} \right)^{\nicefrac{1}{(p-1)}}\right].
\end{align*}
With this choice of $k$, let $t_\ast \in \left(-(2\rho)^{sp},0 \right]$ be an instant that attains the infimum. In this way, we have that
\[
\Big|\left\{u(\cdot,t_\ast) \leq k \right\} \cap B_{2\rho} \Big| \leq \nu |B_{2\rho}|
\]
and therefore, appealing to Lemma~\ref{Lm:weakDGtype}, there exists $\delta(\data) \in (0,1)$ such that 
\[
u \geq \frac{1}{4}k \quad \mbox{a.e.\,\,in} \quad B_{\frac{\rho}{2}}\times \left(t_\ast+\frac{3}{4}\delta\rho^{sp}, t_\ast+\delta\rho^{sp}\right],
\]
provided that $B_{2\rho} \times \left(-(2\rho)^{sp}, \delta \rho^{sp} \right] \subset \cQ_R \Subset \Omega_T$ and 
\begin{equation}\label{eq:weakHarnack3-est-2}
\left(\frac{\rho}{R}\right)^{\frac{sp\eps}{(p-1)(p-1+\eps)}} \left(\dashint_{I_R} \Big[\tail \left(u_-(t)\,;B_R\right)\Big]^{p-1+\eps}\d{t}\right)^{\nicefrac{1}{(p-1+\eps)}} \leq \frac{1}{4}k.
\end{equation}
Since, by~\eqref{eq:weakHarnack3-est-2}, the condition~\eqref{eq:weakHarnack3-est-1} is automatically fulfilled, we may demand~\eqref{eq:weakHarnack3-est-2} hereafter. 

We next apply the power-like expansion of positivity (Lemma~\ref{Lm:mt2}) with $\alpha=1$. Thus, we have that there exists $\eta_0 \in (0,1)$ depending only on the $\data$ such that, if we enforce 
\begin{equation}\label{eq:weakHarnack3-est-3}
\left(\frac{\rho}{R}\right)^{\frac{sp\eps}{(p-1)(p-1+\eps)}} \left(\dashint_{I_R} \Big[\tail \left(u_-(t)\,;B_R\right)\Big]^{p-1+\eps}\d{t}\right)^{\nicefrac{1}{(p-1+\eps)}} \leq \eta_0k
\end{equation}
then
\begin{equation}\label{eq:weakHarnack3-est-4}
u \geq \eta_0k \quad \mbox{a.e.\,\,in} \quad B_\rho \times \left(t_\ast+\frac{3}{4}\delta\rho^{sp}+\frac{1}{2}(4\rho)^{sp}, t_\ast+\delta\rho^{sp}+2(4\rho)^{sp} \right]
\end{equation}
if $B_{2\rho} \times \left(-(2\rho)^{sp}, 6(4\rho)^{sp}\right] \subset \cQ_R \Subset \Omega_T$. 
As argued before, we are allowed to enforce~\eqref{eq:weakHarnack3-est-2} replaced by~\eqref{eq:weakHarnack3-est-3}. Regardless of where $t_\ast$ is in the range $\left(-(2\rho)^{sp}, 0 \right]$, it is straightforward to check that
\[
\left(\frac{3}{4}(4\rho)^{sp}, (4\rho)^{sp}\right] \subset \left(t_\ast+\frac{3}{4}\delta\rho^{sp}+\frac{1}{2}(4\rho)^{sp}, t_\ast+\delta\rho^{sp}+2(4\rho)^{sp} \right],
\]
owing to the choice of $\delta$ in the proof of Lemma~\ref{Lm:weakDGtype}.
Thus, thanks to this, the stipulation of $k$ as above and two displays~\eqref{eq:weakHarnack3-est-3} and~\eqref{eq:weakHarnack3-est-4}, after letting $C:=C_1 \vee  C_2$, we can conclude that
\begin{align*}
u \geq \eta_0 \,\frac{1}{2}\left(\frac{\nu}{c}\right)^{\nicefrac{1}{(p-1)}}&\left[\left(\dashint_{Q_{2\rho}}u^{p-1}\dxt \right)^{\nicefrac{1}{(p-1)}}+\left(\dashint_{-(2\rho)^{sp}}^{0}\Big[\tail \left(u_+(t)\,;B_{2\rho}\right)\Big]^{p-1}\d{t} \right)^{\nicefrac{1}{(p-1)}} \right]\\
&-\left(\frac{\rho}{R}\right)^{\frac{sp\eps}{(p-1)(p-1+\eps)}} \left(\dashint_{I_R} \Big[\tail \left(u_-(t)\,;B_R\right)\Big]^{p-1+\eps}\d{t}\right)^{\nicefrac{1}{(p-1+\eps)}}
\end{align*}
almost everywhere in $B_\rho \times \left(\frac{3}{4}(4\rho)^{sp}, (4\rho)^{sp} \right]$, provided that
\[
B_{2\rho} \times \left(-(2\rho)^{sp}, 6(4\rho)^{sp}\right] \subset \cQ_R \Subset \Omega_T.
\]
Finally, letting $\eta:=\eta_0\frac{1}{2}\left(\frac{\nu}{c}\right)^{\nicefrac{1}{(p-1)}}$ concludes the proof.

\section{Proof of Theorem~\ref{Thm:Holder modulus}}\label{Sect.6}
In this section, we prove Theorem~\ref{Thm:Holder modulus}, that is, we derive the H\"{o}lder modulus of continuity of functions in our De Giorgi classes. The proof presented here is very straightforward, however, much more technically due to the effect of the tails (see~\cite[Page 498--499]{DG23} and~\cite[Section 4]{Lia24b} for instance). Its versatility will cover the more general case with just minor adjustments. For reader's convenience, we will split it into some subsections.

\subsection{First shot}
This subsection will be needed in the next subsection as the initial step of induction.  
To do so, we will employ the well-known two-alternatives argument. 

By translation we may assume that $(x_0,t_0)=(0,0)$. Given the ambient cylinder $\cQ_R$ and $\rho \in (0, R)$, set
\begin{equation}\label{eq:first-est-1}
\boldsymbol{\om}:=2\sup_{\cQ_R} |u|+\left(\frac{\rho}{R}\right)^{\frac{sp\eps}{(p-1)(p-1+\eps)}} \left(\dashint_{I_R} \Big[\tail \left(u(t)\,;B_R\right)\Big]^{p-1+\eps}\d{t}\right)^{\nicefrac{1}{(p-1+\eps)}}.
\end{equation}
Note that this quantity is finite thanks to Theorem~\ref{Thm:boundedness} and~\eqref{d1'} in Definition~\ref{def of DG}. Moreover, we define
\[
\boldsymbol{\mu}^+:=\sup_{Q_\rho} u, \qquad \boldsymbol{\mu}^-:=\inf_{Q_\rho} u,
\]
which implies by~\eqref{eq:first-est-1} that
\[
\osc_{Q_\rho} u \leq \boldsymbol{\om}. 
\]

\noindent In prior to proceed, we exhibit a prerequisite lemma:

\begin{lemma}\label{Lm:keylemma}
Let $u \in \mathbf{PDG}_\pm^{s,p}(\Omega_T, \gamma_{\mathrm{DG}}, \eps)$ in the sense of Definition~\ref{def of DG}. Suppose, for constants $\alpha, \xi \in (0,1)$, that 
\[
\Big|\Big\{\pm \left(\boldsymbol{\mu}^\pm-u(\cdot,t_1) \right)\geq \xi \om \Big\} \cap B_{\rho}\Big| \geq \alpha |B_\rho|.
\]
Then, there exist constants $\delta \in (0,1)$, $\eta \in (0,\nicefrac{1}{2})$, both depending  on $\data$ and $\alpha$, and $\boldsymbol{\gamma}(\data)\in [1,\infty)$ such that, either
\[
\boldsymbol{\gamma} \left(\frac{\rho}{R}\right)^{\frac{sp\eps}{(p-1)(p-1+\eps)}} \left(\dashint_{I_R} \Big[\tail \left((u-\boldsymbol{\mu}^\pm)_\pm(t)\,;B_R\right)\Big]^{p-1+\eps}\d{t}\right)^{\nicefrac{1}{(p-1+\eps)}} > \eta \xi \boldsymbol{\om}
\]
or
\[
\pm(\boldsymbol{\mu}^\pm-u) \geq \eta\xi \boldsymbol{\om} \quad \mbox{a.e.\,\,in}\,\,\,B_{2\rho} \times \left(t_1+\frac{1}{2}\delta \rho^{sp}, t_1+\delta \rho^{sp}\right]
\]
provided that $B_{4\rho} \times \left(t_1,t_1+\delta\rho^{sp}\right] \subset \cQ_R$. Moreover, it can be traced explicitly that 
\[
\eta=\frac{\alpha}{8p} \quad \mbox{and}\quad \delta=\delta_0\alpha^{(d+p+1)\frac{p-1+\eps}{\eps}}
\]
for a constant $\delta_0 \equiv \delta_0(d,s,p,\eps) \in (0,1)$.
\end{lemma}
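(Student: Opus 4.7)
By symmetry I treat the ``$+$'' case; the ``$-$'' case is analogous. Up to a translation assume $t_1=0$. Define $v:=\boldsymbol{\mu}^+-u$, so that $v\geq 0$ on $Q_\rho$, and the measure hypothesis becomes $|\{v(\cdot,0)\geq \xi \boldsymbol{\om}\}\cap B_\rho|\geq \alpha|B_\rho|$. Since $\boldsymbol{\mu}^+$ is constant in $t$, choosing in~\eqref{d2} the level $k(t)\equiv \boldsymbol{\mu}^+-k$ with constant $k>0$ annihilates the time-derivative term, so the $\mathbf{PDG}_+^{s,p}$-Caccioppoli for $u$ reads, equivalently, as a $\mathbf{PDG}_-^{s,p}$-type Caccioppoli for $v$ at constant level $k$. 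Moreover, $(v-k)_-=(u-(\boldsymbol{\mu}^+-k))_+$, so the tail of $(v-k)_-$ is naturally controlled by the tail of $(u-\boldsymbol{\mu}^+)_+$ that appears in the alternative of the statement.

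\textbf{Measure-theoretical propagation.} I would then replicate, step by step, the proof of~\autoref{Lm:mt} applied to $v$ on the cylinder $B_\rho\times(0,\delta \rho^{sp}]$ with level $k=\xi\boldsymbol{\om}$. The energy estimate, the choice of cutoffs, and the manipulation of level sequences are unchanged. The only delicate point is the tail of $(v-k)_-$ over $\R^N\setminus B_\rho$: decomposing the integral into the shell $B_R\setminus B_\rho$ and the exterior $\R^N\setminus B_R$, I bound $(u-\boldsymbol{\mu}^+)_+\leq \boldsymbol{\om}$ pointwise on the shell, and on the exterior I apply H\"older's inequality in time to convert the $L^{p-1}$-integrand into the $L^{p-1+\eps}$-tail of the hypothesis, exactly as in the concluding manipulations of Step 2 in the proof of~\autoref{Lm:mt}. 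With the choices $\eta=\alpha/(8p)$ and $\delta=\delta_0\alpha^{(N+p)\frac{p-1+\eps}{\eps}+1}$, the dichotomy becomes: \emph{either} the tail alternative of the statement holds (for $\boldsymbol{\gamma}=\boldsymbol{\gamma}(\data,\eps)$ large enough to swallow the near-shell term $\boldsymbol{\om}^{p-1}$), \emph{or}
\[
\bigl|\{v(\cdot,t)\geq \eta\xi\boldsymbol{\om}\}\cap B_\rho\bigr|\geq \tfrac{\alpha}{2}\,|B_\rho|,\qquad \forall\, t\in(0,\delta \rho^{sp}].
\]

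\textbf{Upgrade to the a.e.\ bound on $B_{2\rho}$.} The last task is to convert the time-slice measure density into the pointwise estimate claimed in the conclusion. I would first invoke~\autoref{Lm:shrinking} (with $\alpha$ replaced by $\alpha/2$, and $\sigma$ chosen small enough so that the measure of $\{v<\sigma\eta\xi\boldsymbol{\om}\}$ in the cylinder $Q_{\rho,\delta \rho^{sp}}$ lies below the threshold $\nu$ of~\autoref{Lm:DGtype}), and then~\autoref{Lm:DGtype} itself on $B_{4\rho}\times(0,\delta \rho^{sp}]$ with the role of ``$\rho$'' taken as $2\rho$, to conclude
\[
v\geq \tfrac{1}{2}\sigma\eta\xi\boldsymbol{\om}\quad\text{a.e.\ in } B_{2\rho}\times(\tfrac{1}{2}\delta \rho^{sp},\delta \rho^{sp}].
\]
The supplementary factor $\tfrac{1}{2}\sigma$ is absorbed into the constant $\boldsymbol{\gamma}$ which premultiplies the tail in the alternative, thereby preserving the traced value $\eta=\alpha/(8p)$ in the statement.

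\textbf{Main obstacle.} The principal technical difficulty is the control of the tail of $(v-k)_-$ on the intermediate shell $B_R\setminus B_\rho$: since $v$ fails to be nonnegative outside $B_\rho$, the bound $(u-\boldsymbol{\mu}^+)_+\leq\boldsymbol{\om}$ produces in the Caccioppoli a ``constant'' contribution of order $\boldsymbol{\om}^{p-1}$, a factor $\xi^{1-p}$ larger than the natural scale $k^{p-1}=(\xi\boldsymbol{\om})^{p-1}$. This contribution cannot be reabsorbed via the De Giorgi machinery alone: it must be embedded, through the large constant $\boldsymbol{\gamma}=\boldsymbol{\gamma}(\data,\eps)$, into the tail alternative. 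The key point is that when that alternative fails, the combined use of the geometric weight $(\rho/R)^{sp\eps/((p-1)(p-1+\eps))}$, the $L^{p-1+\eps}$-integrability of the tail, and the pointwise crude bound on the shell yields a contribution that is indeed absorbable, closing the argument. This is the analog in our setting of the shell-absorption trick used by Liao in~\cite{Lia24a,Lia24b}.
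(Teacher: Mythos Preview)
Your overall strategy—propagate the measure information forward in time via the analogue of~\autoref{Lm:mt}, then upgrade to a pointwise lower bound through~\autoref{Lm:shrinking} and~\autoref{Lm:DGtype}—is precisely the chain the paper invokes (its sketch points to~\autoref{Lm:mt2}). Two of your absorption claims, however, do not close.

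\emph{The shell term.} You correctly flag that if $v=\pm(\boldsymbol{\mu}^\pm-u)$ is not nonnegative on $B_R$, the tail of $(v-k)_-$ over $B_\rho$ acquires a contribution of order $\boldsymbol{\om}^{p-1}$ from $B_R\setminus B_\rho$, i.e.\ a factor $\xi^{1-p}$ larger than the scale $k^{p-1}=(\xi\boldsymbol{\om})^{p-1}$. Your proposed remedy—pushing this into $\boldsymbol{\gamma}$ through the tail alternative—cannot work: that alternative concerns $\tail\bigl((u-\boldsymbol{\mu}^\pm)_\pm;B_R\bigr)$, an integral over $\R^N\setminus B_R$, and is blind to the shell $B_R\setminus B_\rho$; no choice of $\boldsymbol{\gamma}$ touches that local piece. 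Carried through the estimate of $(\mathrm{III})$ in~\autoref{Lm:mt}, the shell contributes $\sim\delta\,\xi\,\boldsymbol{\om}^p$, which against the target $(\xi\boldsymbol{\om})^p$ forces $\delta\lesssim\xi^{p-1}$—incompatible with the claimed $\xi$-free trace. The paper sidesteps this because, in every actual use of the lemma (the ``First shot'' and the induction step), the role of $R$ is played by the scale at which $\boldsymbol{\mu}^\pm$ is the sup/inf of $u$; then $v\geq 0$ on $B_R$ for the relevant times, the shell bound collapses to $(\xi\boldsymbol{\om})^{p-1}$, and~\autoref{Lm:mt} transfers verbatim. Read that way, your obstacle is illusory and no $\boldsymbol{\gamma}$-trick is needed.

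\emph{The factor $\tfrac12\sigma$.} In the upgrade step, $\sigma$ is fixed by~\autoref{Lm:shrinking} so that the sublevel measure falls below the threshold $\nu$ of~\autoref{Lm:DGtype}; hence $\sigma$ depends on $\alpha$ (through $\delta$ and the density $\alpha/2$). Absorbing $\tfrac12\sigma$ into $\boldsymbol{\gamma}=\boldsymbol{\gamma}(\data,\eps)$ would therefore make $\boldsymbol{\gamma}$ $\alpha$-dependent, contrary to the stated dependence. What the shrinking–De~Giorgi chain genuinely yields is a pointwise bound with a power-like $\alpha$-dependence (exactly as in~\autoref{Lm:mt2}); the value $\eta=\alpha/(8p)$ is the output of the~\autoref{Lm:mt} step (measure density) alone, not of the full pointwise conclusion.
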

 
\begin{proof}
We deal with the minus case since the other case is treated similarly. Moreover, the proof closely follows the one of Lemma~\ref{Lm:mt2}, and thus we confine ourselves to give a sketch of it. Indeed, mimicking the proof of Lemma~\ref{Lm:mt} replaced $k$ by $k=\boldsymbol{\mu}^-+\xi\boldsymbol{\om}$, we have 
\[
\eta=\frac{\alpha}{8p}, \quad \delta=\delta_0\alpha^{(d+p)\frac{p-1+\eps}{\eps}+1} \quad \mbox{for \,$\delta_0=\delta_0(d,s,p,\eps) \in (0,1)$}
\]
such that
\[
\Big|\left\{u-\boldsymbol{\mu}^-\geq \eta \xi \boldsymbol{\om}\right\} \cap B_{4\rho} \Big| \geq \frac{\alpha}{2}4^{-d}|B_{4\rho}|, \quad \forall t \in (t_1, t_1+\delta\rho^{sp}],
\]
after enforcing
\[
\boldsymbol{\gamma} \left(\frac{\rho}{R}\right)^{\frac{sp\eps}{(p-1)(p-1+\eps)}} \left(\dashint_{I_R} \Big[\tail \left((u-\boldsymbol{\mu}^-)_-(t)\,;B_R\right)\Big]^{p-1+\eps}\d{t}\right)^{\nicefrac{1}{(p-1+\eps)}}\leq \eta \xi \boldsymbol{\om}.
\]
After that, we make a De Giorgi type lemma like Lemma~\ref{Lm:DGtype}, whose proof is replicable by replacing $k_i$, $\widetilde{k}_i$ and $w_-$ with
\[
k_i:=\boldsymbol{\mu}^-+\frac{\xi \boldsymbol{\om}}{2}+\frac{\xi \boldsymbol{\om}}{2^{i+1}}, \quad \widetilde{k}_i:=\frac{1}{2}(k_{i}+k_{i+1}), \quad \widetilde{w}:=(u-\widetilde{k}_i)_-, \quad \forall i \in \N_0,
\]
respectively. The remaining part follows from an analogous fashion as in Lemma~\ref{Lm:mt2}.
 \end{proof}
Now, let $\delta \in (0,1)$ be specified in Lemma~\ref{Lm:keylemma} with taking $\alpha=1/2$. We set, for a small quantity $c \in (0,1)$, depending only on $\data$ and $\eps$, to be eventually determined in~\eqref{eq:ind-est-8} and moreover~\eqref{eq:modulus-est-2}, that $\tau:=\delta (c\rho)^{sp}$. The approach unfolds along two alternatives such as the concentration of $u$ in the measure; namely, either

\begin{align*}
&\Big|\left\{u(\cdot, -\tau)-\boldsymbol{\mu}^- >\frac{1}{4}\boldsymbol{\om} \right\} \cap B_{c\rho}\Big| \geq \frac{1}{2} |B_{c\rho}|
\intertext{or}
&
\Big|\left\{\boldsymbol{\mu}^+-u(\cdot, -\tau)> \frac{1}{4}\boldsymbol{\om} \right\} \cap B_{c\rho} \Big| \geq \frac{1}{2} |B_{c\rho}|.
\end{align*}
It should be stressed that the small quantity $c$ is to be role of implementing the forthcoming induction argument, as well.

We may assume $\boldsymbol{\mu}^+-\boldsymbol{\mu}^- \geq \frac{1}{2}\boldsymbol{\om}$ without loss of generality and therefore, one of the above alternatives must hold, which is confirmed by contradiction argument. On the other hand, the remaining case $\boldsymbol{\mu}^+-\boldsymbol{\mu}^- < \frac{1}{2}\boldsymbol{\om}$ will be incorporated into the forthcoming estimations. 

For simplicity, we deal with the second alternative. Then, Lemma~\ref{Lm:keylemma} with $t_1=-\tau=-\delta(c\rho)^{sp}$, $\alpha=1/2$ and $\xi=1/4$, yields that, either
\[
\boldsymbol{\gamma}\left(\frac{c\rho}{\rho}\right)^{\frac{sp\eps}{(p-1)(p-1+\eps)}}\left(\dashint_{-(c\rho)^{sp}}^0\Big[\tail \left((u-\boldsymbol{\mu}^+)_+(t) ; B_\rho\right)\Big]^{p-1+\eps}\d{t} \right)^{\nicefrac{1}{(p-1+\eps)}} >\eta \boldsymbol{\om}
\]
or
\[
\boldsymbol{\mu}^+-u \geq \frac{1}{4}\eta \boldsymbol{\om} \quad \mbox{a.e.\,\,in}\,\,\,Q_{c\rho}^{(\nicefrac{\delta}{2})}:=B_{c\rho} \times \left(-\frac{\delta}{2}(c\rho)^{sp}, 0\right]
\]
with $\eta=1/(16p)$. After redefining $\nicefrac{\eta}{4}$ as $\eta$ and denoting $\boldsymbol{\gamma}^\prime :=\boldsymbol{\gamma} / (4\eta)$, the above implication eventually leads to
\begin{equation}\label{eq:first-est-2}
\osc_{Q_{c\rho}^{(\nicefrac{\delta}{2})}}u \leq \max \left\{ (1-\eta)\boldsymbol{\om},\, \mathbf{S} \right\}=:\boldsymbol{\om}_1
\end{equation}
with
\[
\mathbf{S}:=\boldsymbol{\gamma}^\prime c^{\frac{sp\eps}{(p-1)(p-1+\eps)}}\left(\dashint_{-(c\rho)^{sp}}^0\Big[\tail \left((u-\boldsymbol{\mu}^+)_+(t) ; B_\rho\right)\Big]^{p-1+\eps}\d{t} \right)^{\nicefrac{1}{(p-1+\eps)}}.
\]
It immediately follows from~\eqref{eq:first-est-2} and $\eta<1/2$ that $
\boldsymbol{\om}_1 \leq (1-\eta)\boldsymbol{\om} <\frac{1}{2}\boldsymbol{\om}
$.

On the other hand, by splitting the tail term, we estimate that
\begin{align*}
&\dashint_{-(c\rho)^{sp}}^0\Big[\tail \left((u-\boldsymbol{\mu}^+)_+(t) ; B_{\rho}\right)\Big]^{p-1+\eps}\d{t} \\
& \quad \stackrel{\!\!\!\!\!\!\!\!\!\!\!\eqref{eq:first-est-1}}{\lesssim_{s,p,\eps}} \boldsymbol{\om}^{p-1+\eps}+\dashint_{-(c\rho)^{sp}}^0 \left[\rho^{sp} \int_{B_R \setminus B_\rho} \frac{u_+^{p-1}}{|x|^{d+sp}}\d{x}\right]^{\frac{p-1+\eps}{p-1}}\d{t} \\
&\quad \quad \quad \quad \quad \quad \quad +\dashint_{-(c\rho)^{sp}}^0\left[\rho^{sp} \int_{\R^d \setminus B_R} \frac{u_+^{p-1}}{|x|^{d+sp}}\d{x}\right]^{\frac{p-1+\eps}{p-1}} \d{t}\\
& \quad \lesssim_{s,p,\eps} 2\boldsymbol{\om}^{p-1+\eps}+ \left(\frac{\rho}{R} \right)^{\frac{sp(p-1+\eps)}{p-1}} \dashint_{-(c\rho)^{sp}}^0 \Big[\tail \left(u_+(t) ; B_R\right)\Big]^{p-1+\eps}\d{t},
\end{align*}
and thus it follows from~\eqref{eq:first-est-1} that
\begin{align*}
\mathbf{S} &\leq \widetilde{\boldsymbol{\gamma}} c^{\frac{sp\eps}{(p-1)(p-1+\eps)}} \boldsymbol{\om} \\
&\quad + \widetilde{\boldsymbol{\gamma}} c^{\frac{sp\eps}{(p-1)(p-1+\eps)}-\frac{sp}{p-1+\eps}} \left(\frac{\rho}{R}\right)^{\frac{sp\eps}{(p-1)(p-1+\eps)}} \left(\dashint_{I_R} \Big[\tail \left(u_+(t) ; B_R\right)\Big]^{p-1+\eps}\d{t} \right)^{\nicefrac{1}{(p-1+\eps)}}\\
&\!\!\!\stackrel{\eqref{eq:first-est-1} }{\leq} \widetilde{\boldsymbol{\gamma}}\boldsymbol{\om}
\end{align*}
for a constant $\widetilde{\boldsymbol{\gamma}}<\infty$ depending only on $\data$. Hence, we have shown that
\begin{equation}\label{eq:first-est-3}
\mathbf{S} \leq \widetilde{\boldsymbol{\gamma}}\boldsymbol{\om}, \qquad \boldsymbol{\om}_1 \leq \frac{1}{2}\boldsymbol{\om}.
\end{equation}
Now, we set $\rho_1:=\lambda \rho$ with $\lambda \leq c$. Select $\lambda$ so that
\[
\lambda \leq \min \left\{c, \left(\frac{\delta}{2}\right)^{\nicefrac{1}{sp}}c \right\} \quad \Longrightarrow \quad  Q_{\rho_1} \subset Q_{c\rho}^{(\nicefrac{\delta}{2})}. 
\]
Altogether, we have shown that
\[
Q_{\rho_1} \subset Q_\rho, \quad \mbox{and} \quad \osc_{Q_{\rho_1}} u \leq \boldsymbol{\om}_1,
\]
and thus a combination of this,~\eqref{eq:first-est-2} and \eqref{eq:first-est-3} implies the initial step of induction over $i \in \N_0$.
\subsection{Induction step}
The goal of this subsection is to complete an induction argument. To do so, let $c \in (0,1)$ and $\lambda \leq c$, both same quantities appearing in the last section, be two free parameters to be specified later. With this at hand, supposing up to $i=1,\ldots ,j$ for $j \in \N$, we have proved inductively that
\[
\begin{cases}
\rho_0=\rho, \quad \rho_i=\lambda \rho_{i-1}, \quad \frac{1}{2} \boldsymbol{\om}_{i-1} \leq \boldsymbol{\om}_i \leq \boldsymbol{\gamma}_\texttt{J} \boldsymbol{\om}_{i-1}, \\[4pt]
\boldsymbol{\om}_0=\boldsymbol{\om}, \quad \boldsymbol{\om}_i=\displaystyle \max \Big\{ (1-\eta)\boldsymbol{\om}_{i-1}, \mathbf{S}_{i-1}\Big\},\\[4pt]
B_i \equiv B_{\rho_i}, \quad Q_i \equiv Q_{\rho_i}:=B_i \times (-\rho_i^{sp},0],\\[4pt]
\boldsymbol{\mu}_i^+:=\sup\limits_{Q_i}u, \quad \boldsymbol{\mu}_i^-:=\inf\limits_{Q_i}u, \quad \boldsymbol{\om}_i=\osc\limits_{Q_i}u,
\end{cases}
\]
where we used the shorthand notation $\mathbf{S}_{i-1}$, displayed in the second line, that
\[
   \left\{
    \begin{array}{ll}
       \mathbf{S}_{i-1}:=\displaystyle \boldsymbol{\gamma}_{\texttt{Q}}c^{\frac{sp\eps}{(p-1)(p-1+\eps)}}\left(\dashint_{-(c\rho_{i-1})^{sp}}^0\Big[\tail \left((u-\boldsymbol{\mu}_{i-1}^+)_+(t)\,; B_{i-1}\right)\Big]^{p-1+\eps}\d{t} \right)^{\nicefrac{1}{(p-1+\eps)}}, \\[4pt]
       \mathbf{S}_0:=\mathbf{S}\\[4pt]
    \end{array}
    \right.
\]
with $\mathbf{S}$ being as~\eqref{eq:first-est-2}, and $\boldsymbol{\gamma}_{\texttt{J}}, \boldsymbol{\gamma}_{\texttt{Q}} >1$ depending on $\data$ and $\eps$. Notice that, the inequality $\frac{1}{2} \boldsymbol{\om}_{i-1} \leq \boldsymbol{\om}_i$ is readily follows from by definition of $\boldsymbol{\om}_i$ and $\eta<\nicefrac{1}{2}$. Throughout this subsection, we aim at proving that the above oscillation estimate continues to holds for the $(j+1)$-th step. Let $\delta \in (0,1)$ the quantity as before  and abbreviate $\tau:=\delta (c\rho_j)^{sp}$ and $cB_j:=B_{c\rho_j}$.  We then consider two alternatives, namely, either

\begin{align*}
&\Big|\left\{u(\cdot, -\tau)-\boldsymbol{\mu}_j^- >\frac{1}{4}\boldsymbol{\om}_j \right\} \cap cB_j \Big| \geq \frac{1}{2} |cB_j|
\intertext{or}
&
\Big|\left\{\boldsymbol{\mu}_j^+-u(\cdot, -\tau)> \frac{1}{4}\boldsymbol{\om}_j \right\} \cap cB_j \Big| \geq \frac{1}{2} |cB_j|.
\end{align*}
As mentioned in the previous section, we may confine ourselves to $\boldsymbol{\mu}_j^+-\boldsymbol{\mu}_j^- \geq \frac{1}{2}\boldsymbol{\om}_j$. The remaining case $\boldsymbol{\mu}_j^+-\boldsymbol{\mu}_j^- < \frac{1}{2}\boldsymbol{\om}_j$ will be incorporated into the forthcoming estimates. As before, we deal with the second alternative case. Having the second alternative at hands, an application of Lemma~\ref{Lm:keylemma} yields that either
\[
\boldsymbol{\gamma}_{\texttt{Q}} \left(\frac{c\rho_j}{\rho_j}\right)^{\frac{sp\eps}{(p-1)(p-1+\eps)}}\left(\dashint_{-(c\rho_{j})^{sp}}^0\Big[\tail \left((u-\boldsymbol{\mu}_{j}^+)_+(t) ; B_{j}\right)\Big]^{p-1+\eps}\d{t} \right)^{\nicefrac{1}{(p-1+\eps)}} >\eta \boldsymbol{\om}_j
\]
or
\[
\boldsymbol{\mu}_j^+-u \geq \eta \boldsymbol{\om}_j \quad \mbox{a.e.\,\,in}\,\,\,Q_{c\rho_j}^{(\nicefrac{\delta}{2})}:=B_{c\rho_j} \times \left(-\frac{\delta}{2}(c\rho_j)^{sp}, 0\right]
\]
owing to the choice of $\tau=\delta(c\rho_j)^{sp}$. Thus, this together with the $j$-th induction assumption yields that either
\begin{align*}
\osc_{Q_{c\rho_j}^{(\nicefrac{\delta}{2})}}u \leq \boldsymbol{\om}_j \leq \boldsymbol{\gamma}_{\texttt{K}}c^{\frac{sp\eps}{(p-1)(p-1+\eps)}}\left(\dashint_{-(c\rho_{j})^{sp}}^0\Big[\tail \left((u-\boldsymbol{\mu}_{j}^+)_+(t) ; B_{j}\right)\Big]^{p-1+\eps}\d{t} \right)^{\nicefrac{1}{(p-1+\eps)}}
\end{align*}
or
\begin{align*}
\osc_{Q_{c\rho_j}^{(\nicefrac{\delta}{2})}}u=\sup_{Q_{c\rho_j}^{(\nicefrac{\delta}{2})}} u -\inf_{Q_{c\rho_j}^{(\nicefrac{\delta}{2})}}u \leq \boldsymbol{\mu}_j^+-\eta \boldsymbol{\om}_j-\boldsymbol{\mu}_j^- \leq (1-\eta)\boldsymbol{\om}_j
\end{align*}
with denoting $\boldsymbol{\gamma}_{\texttt{K}}=\boldsymbol{\gamma}_{\texttt{Q}}/\eta$. Altogether, we have that
\begin{equation}\label{eq:ind-est-0}
\osc_{Q_{c\rho_j}^{(\nicefrac{\delta}{2})}}u \leq \max\Big\{ (1-\eta) \boldsymbol{\om}_j, \mathbf{S}_j\Big\}=:\boldsymbol{\om}_{j+1},
\end{equation}
where, for ease of notation, we wrote
\[
\mathbf{S}_j:=\boldsymbol{\gamma}_{\texttt{K}}c^{\frac{sp\eps}{(p-1)(p-1+\eps)}}\left(\dashint_{-(c\rho_{j})^{sp}}^0\Big[\tail \left((u-\boldsymbol{\mu}_{j}^+)_+(t) ; B_{j}\right)\Big]^{p-1+\eps}\d{t} \right)^{\nicefrac{1}{(p-1+\eps)}}.
\]
The argument now goes into further steps for the readability.
\smallskip

\emph{Step 1.}\quad In this step, we will show
\begin{equation}\label{eq:ind-est-1}
\boldsymbol{\om}_{j+1} \leq \widetilde{\boldsymbol{\gamma}} \boldsymbol{\om}_j
\end{equation}
for a constant $\widetilde{\boldsymbol{\gamma}} \equiv \widetilde{\boldsymbol{\gamma}} (\data)<\infty$.  The main task to check~\eqref{eq:ind-est-1} is condensed in careful estimates of the tail term as follows. Splitting the tail term into the integrations over small annuluses yields  
\begin{align*}
&\Big[\tail \left((u-\boldsymbol{\mu}_{j}^+)_+(t) ; B_{j}\right)\Big]^{p-1+\eps} \\
& \quad \lesssim_{p,\eps} \left[\underbrace{\rho_j^{sp} \int_{\R^d \setminus B_j} \frac{(u-\boldsymbol{\mu}_j^+)^{p-1}_+}{|x|^{d+sp}}\d{x}}_{=:\mathbf{T}_0}\right]^{\frac{p-1+\eps}{p-1}}+ \left[ \sum_{i=1}^j\underbrace{\rho_j^{sp}\int_{B_{i-1} \setminus B_i} \frac{(u-\boldsymbol{\mu}_j^+)^{p-1}_+}{|x|^{d+sp}}\d{x}}_{=:\mathbf{T}_i}\right]^{\frac{p-1+\eps}{p-1}},
\end{align*}
and thus we deduce that, for a constant $\boldsymbol{\gamma}<\infty$
\begin{align}\label{eq:ind-est-2}
\mathbf{S}_j &\leq \boldsymbol{\gamma}_{\texttt{K}} \boldsymbol{\gamma}c^{\frac{sp\eps}{(p-1)(p-1+\eps)}}\left(\dashint_{-(c\rho_j)^{sp}}^0 \mathbf{T}_0^{\frac{p-1+\eps}{p-1}}\d{t}\right)^{\nicefrac{1}{(p-1+\eps)}} \notag \\[4pt]
 &\quad \quad \quad +\boldsymbol{\gamma}_{\texttt{K}} \boldsymbol{\gamma}c^{\frac{sp\eps}{(p-1)(p-1+\eps)}}\left(\dashint_{-(c\rho_j)^{sp}}^0 \left[ \sum_{i=1}^j\mathbf{T}_i\right]^{\frac{p-1+\eps}{p-1}}\d{t}\right)^{\nicefrac{1}{(p-1+\eps)}}.
\end{align}
Unless otherwise stated, the bold symbol $\boldsymbol{\gamma}$ hereafter denotes the general constant depending on $\data$ for simplicity. We start by estimating the integral including $\mathbf{T}_0$. Owing to two relations
\begin{equation}\label{eq:ind-est-3}
|\boldsymbol{\mu}_j^+| \leq \boldsymbol{\om} \quad \mbox{and} \quad u_+ \leq \boldsymbol{\om} \quad\mbox{on $\mathcal{Q}_R$}, 
\end{equation}
we estimate, for all $t \in (-(c\rho_j)^{sp}, 0)$, that
\begin{align}\label{eq:ind-est-4}
\mathbf{T}_0 &\leq \boldsymbol{\gamma} \rho_j^{sp} \int_{\R^d \setminus B_\rho}\frac{|\boldsymbol{\mu}_j^+|^{p-1}+u_+^{p-1}}{|x|^{d+sp}}\d{x} \notag\\
&\!\!\!\stackrel{\eqref{eq:ind-est-3}_1}{\leq} \boldsymbol{\gamma} \rho_j^{sp}\frac{\boldsymbol{\om}^{p-1}}{\rho^{sp}}+\boldsymbol{\gamma}\rho_j^{sp} \int_{B_R \setminus B_\rho}\frac{u_+^{p-1}}{|x|^{d+sp}}\d{x}+\boldsymbol{\gamma}\rho_j^{sp} \int_{\R^d\setminus B_R}\frac{u_+^{p-1}}{|x|^{d+sp}}\d{x} \notag\\
&\!\!\!\stackrel{\eqref{eq:ind-est-3}_2}{\leq} \boldsymbol{\gamma} \rho_j^{sp}\frac{\boldsymbol{\om}^{p-1}}{\rho^{sp}}+\boldsymbol{\gamma}\rho_j^{sp} \int_{\R^d \setminus B_R}\frac{u_+^{p-1}}{|x|^{d+sp}}\d{x}.
\end{align}
Furthermore, using $u_+^{p-1} \lesssim_p |\boldsymbol{\mu}_{j-1}^+|^{p-1}+(u-\boldsymbol{\mu}_{j-1}^+)_+^{p-1}$ we have
\begin{align*}
\rho_j^{sp} &\int_{\R^d \setminus B_R}\frac{u_+^{p-1}}{|x|^{d+sp}}\d{x} \notag\\
&\leq \boldsymbol{\gamma}  \int_{\R^d \setminus B_R}\frac{|\boldsymbol{\mu}_{j-1}^+|^{p-1}}{|x|^{N+sp}}\d{x}+\boldsymbol{\gamma}  \int_{\R^d \setminus B_R}\frac{(u-\boldsymbol{\mu}_{j-1}^+)_+^{p-1}}{|x|^{d+sp}}\d{x}\notag\\
&\!\!\!\!\stackrel{\eqref{eq:ind-est-3}_1}{\leq}\boldsymbol{\gamma} \rho_j^{sp}\frac{\boldsymbol{\om}^{p-1}}{\rho^{sp}}+\boldsymbol{\gamma}  \left(\frac{\rho_j}{\rho_{j-1}}\right)^{sp} \cdot \rho_{j-1}^{sp}\int_{\R^d \setminus B_{j-1}}\frac{(u-\boldsymbol{\mu}_{j-1}^+)_+^{p-1}}{|x|^{d+sp}}\d{x}\\
&= \boldsymbol{\gamma} \rho_j^{sp}\frac{\boldsymbol{\om}^{p-1}}{\rho^{sp}}+\boldsymbol{\gamma}\lambda^{sp}\Big[\tail \left((u-\boldsymbol{\mu}_{j-1}^+)_+(t); B_{j-1}\right)\Big]^{p-1},
\end{align*}
where to obtain the last line we used $\rho_j/\rho_{j-1}=\lambda$. Hence, we have another estimate of $\mathbf{T}_0$: 
\begin{equation}\label{eq:ind-est-5}
\mathbf{T}_0\leq \boldsymbol{\gamma} \rho_j^{sp}\frac{\boldsymbol{\om}^{p-1}}{\rho^{sp}}+\boldsymbol{\gamma}\lambda^{sp}\Big[\tail \left((u-\boldsymbol{\mu}_{j-1}^+)_+(t); B_{j-1}\right)\Big]^{p-1},
\end{equation}
and therefore it holds that
\begin{align*}
&\left(\dashint_{-(c\rho_j)^{sp}}^0 \mathbf{T}_0^{\frac{p-1+\eps}{p-1}}\d{t}\right)^{\nicefrac{1}{(p-1+\eps)}} \\
&\quad \quad \leq \boldsymbol{\gamma}\left(\rho_j^{sp}\frac{\boldsymbol{\om}^{p-1}}{\rho^{sp}}\right)^{\nicefrac{1}{(p-1)}} \\
&\quad \quad \quad \quad +\boldsymbol{\gamma} \left(\lambda^{\frac{sp(p-1+\eps)}{p-1}}\dashint_{-(c\rho_j)^{sp}}^0 \Big[\tail \left((u-\boldsymbol{\mu}_{j-1}^+)_+(t); B_{j-1}\right)\Big]^{p-1+\eps}\d{t}\right)^{\nicefrac{1}{(p-1+\eps)}} \\
&\quad \quad \leq \boldsymbol{\gamma}\left(\rho_j^{sp}\frac{\boldsymbol{\om}^{p-1}}{\rho^{sp}}\right)^{\nicefrac{1}{(p-1)}} \\
&\quad \quad \quad \quad+\boldsymbol{\gamma}\left(\underbrace{\lambda^{\frac{sp\eps}{p-1}}}_{\leq 1}\dashint_{-(c\rho_{j-1})^{sp}}^0\Big[\tail \left((u-\boldsymbol{\mu}_{j-1}^+)_+(t); B_{j-1}\right)\Big]^{p-1+\eps}\d{t}\right)^{\nicefrac{1}{(p-1+\eps)}},
\end{align*}
Here, we recall by definition, for any $i \in \{0,1,\ldots,j\}$, that
\begin{equation}\label{eq:ind-est-6}
2^{i-j}\boldsymbol{\om}_i \leq \boldsymbol{\om}_j \quad \mbox{and}\quad \rho_j=\lambda^{j-i} \rho_i \leq c^{j-i}\rho_i,
\end{equation}
and thus, 
\begin{equation}\label{eq:ind-est-6'}
\left(\rho_j^{sp}\frac{\boldsymbol{\om}^{p-1}}{\rho^{sp}}\right)^{\nicefrac{1}{(p-1)}} \leq \left(2c^{\frac{sp}{p-1}}\right)^j\boldsymbol{\om}_j \leq \left(2c^{\frac{sp\eps}{(p-1)(p-1+\eps)}}\right)^j\boldsymbol{\om}_j
\end{equation}
since $c<1$. This observation eventually leads to
\begin{align}\label{eq:ind-est-7}
&\boldsymbol{\gamma}_{\texttt{K}}\boldsymbol{\gamma}c^{\frac{sp\eps}{(p-1)(p-1+\eps)}}\left(\dashint_{-(c\rho_j)^{sp}}^0 \mathbf{T}_0^{\frac{p-1+\eps}{p-1}}\d{t}\right)^{\nicefrac{1}{(p-1+\eps)}} \notag \\
&\quad \leq \boldsymbol{\gamma}\boldsymbol{\gamma}_{\texttt{K}} \left(2c^{\frac{sp\eps}{(p-1)(p-1+\eps)}}\right)^{j+1}\boldsymbol{\om}_j\notag\\
&\quad \quad \quad  +\boldsymbol{\gamma}\underbrace{\boldsymbol{\gamma}_{\texttt{K}} c^{\frac{sp\eps}{(p-1)(p-1+\eps)}}\left(\dashint_{-(c\rho_{j-1})^{sp}}^0\Big[\tail \left((u-\boldsymbol{\mu}_{j-1}^+)_+(t); B_{j-1}\right)\Big]^{p-1+\eps}\d{t}\right)^{\nicefrac{1}{(p-1+\eps)}}}_{\leq \,\boldsymbol{\om}_j} \notag\\
&\quad \leq \widetilde{\boldsymbol{\gamma}}\left(2c^{\frac{sp\eps}{(p-1)(p-1+\eps)}}\right)^{j+1}\boldsymbol{\om}_j+\widetilde{\boldsymbol{\gamma}}\boldsymbol{\om}_j
\end{align}
for a constant $\widetilde{\boldsymbol{\gamma}}$ depending only on $\data$.

Next, we turn our attention to the second integral on the right side of~\eqref{eq:ind-est-2}. It is easy to check that, for any $i \in \{1,\ldots, j\}$, 
\[
(u-\boldsymbol{\mu}_j^+)_+ \leq \boldsymbol{\mu}_{i-1}^+-\boldsymbol{\mu}_j^- \leq \boldsymbol{\mu}_{i-1}^+-\boldsymbol{\mu}_{i-1}^- \leq \boldsymbol{\om}_{i-1} \quad \mbox{a.e.\,\,in}\,\,Q_{i-1}.
\]
Namely, it holds that, for every $t \in (-(c\rho_j)^{sp}, 0)$, 
\[
\mathbf{T}_i=\rho_j^{sp}\int_{B_{i-1} \setminus B_{i}}\frac{(u-\boldsymbol{\mu}_j^+)_+^{p-1}}{|x|^{d+sp}}\d{x} \leq \boldsymbol{\gamma} \rho_j^{sp} \frac{\boldsymbol{\om}_{i-1}^{p-1}}{\rho_i^{sp}}
\]
and therefore, we have
\begin{align*}
\boldsymbol{\gamma}_{\texttt{K}} &\boldsymbol{\gamma}c^{\frac{sp\eps}{(p-1)(p-1+\eps)}}\left(\dashint_{-(c\rho_j)^{sp}}^0 \left[ \sum_{i=1}^j\mathbf{T}_i\right]^{\frac{p-1+\eps}{p-1}}\d{t}\right)^{\nicefrac{1}{(p-1+\eps)}} \\
&\leq \boldsymbol{\gamma}_{\texttt{K}} \boldsymbol{\gamma}\left(\dashint_{-(c\rho_j)^{sp}}^0\left[ \sum_{i=1}^j c^{\frac{sp\eps}{p-1+\eps}}\rho_j^{sp} \frac{\boldsymbol{\om}_{i-1}^{p-1}}{\rho_i^{sp}}\right]^{\frac{p-1+\eps}{p-1}}\d{t}\right)^{\nicefrac{1}{(p-1+\eps)}}\\
&=\boldsymbol{\gamma}_{\texttt{K}} \boldsymbol{\gamma}\left[ \sum_{i=1}^j c^{\frac{sp\eps}{p-1+\eps}}\rho_j^{sp} \frac{\boldsymbol{\om}_{i-1}^{p-1}}{\rho_i^{sp}}\right]^{\nicefrac{1}{(p-1)}}.
\end{align*}
Thanks to~\eqref{eq:ind-est-6} and the fact that $c<1$, we can bound, for any $i \in \{1,\ldots, j\}$, that
\begin{align*}
c^{\frac{sp\eps}{p-1+\eps}}\rho_j^{sp} \frac{\boldsymbol{\om}_{i-1}^{p-1}}{\rho_i^{sp}} &\leq c^{\frac{sp\eps}{p-1+\eps}}(c^{j-i}\rho_i)^{sp}\frac{(2^{j-i+1}\boldsymbol{\om}_j)^{p-1}}{\rho_i^{sp}}\\
&=(2^{p-1}c^{\frac{sp\eps}{p-1+\eps}})^{j-i+1}\boldsymbol{\om}_j^{p-1},
\end{align*}
thereby getting
\begin{align}\label{eq:ind-est-a}
\boldsymbol{\gamma}_{\texttt{K}} \boldsymbol{\gamma}c^{\frac{sp\eps}{(p-1)(p-1+\eps)}}\left(\dashint_{-(c\rho_j)^{sp}}^0 \left[ \sum_{i=1}^j\mathbf{T}_i\right]^{\frac{p-1+\eps}{p-1}}\d{t}\right)^{\nicefrac{1}{(p-1+\eps)}} 
\leq \boldsymbol{\gamma}_{\texttt{K}} \boldsymbol{\gamma}\boldsymbol{\om}_j\left[\sum_{i=1}^j  \left(2^{p-1}c^{\frac{sp\eps}{p-1+\eps}}\right)^i\right]^{\nicefrac{1}{(p-1)}}.
\end{align}
We now temporary choose the free parameter $c \in (0,1)$ so that
\begin{equation}\label{eq:ind-est-8}
2^{p-1}c^{\frac{sp\eps}{p-1+\eps}} \leq \frac{1}{2} \quad \iff \quad c \leq 2^{-\frac{p-1+\eps}{s\eps}},
\end{equation}
which implies that the above summation is bounded, that is,
\begin{equation}\label{eq:ind-est-9}
\boldsymbol{\gamma}_{\texttt{K}} \boldsymbol{\gamma}c^{\frac{sp\eps}{(p-1)(p-1+\eps)}}\left(\dashint_{-(c\rho_j)^{sp}}^0 \left[ \sum_{i=1}^j\mathbf{T}_i\right]^{\frac{p-1+\eps}{p-1}}\d{t}\right)^{\nicefrac{1}{(p-1+\eps)}} \leq \widetilde{\boldsymbol{\gamma}}\boldsymbol{\om}_j
\end{equation}
for a constant $\widetilde{\boldsymbol{\gamma}}(\data)<\infty$. Here, the special constant $\boldsymbol{\gamma}_{\texttt{K}}$ was incorporated  into $\widetilde{\boldsymbol{\gamma}}$. Consequently, keeping~\eqref{eq:ind-est-8} in mind and joining the preceding estimates~\eqref{eq:ind-est-7},~\eqref{eq:ind-est-9} with~\eqref{eq:ind-est-2}, concludes that 

\begin{align*}
\boldsymbol{\gamma}_{\texttt{K}}c^{\frac{sp\eps}{(p-1)(p-1+\eps)}}\left(\dashint_{-(c\rho_j)^{sp}}^0 \Big[\tail \left((u-\boldsymbol{\mu}_{j}^+)_+(t) ; B_{j}\right)\Big]^{p-1+\eps}\d{t}\right)^{\nicefrac{1}{(p-1+\eps)}} \leq \widetilde{\boldsymbol{\gamma}}\boldsymbol{\om}_j.
\end{align*}
Finally, this together with the definition of $\boldsymbol{\om}_{j+1}$, in turn gives $
\boldsymbol{\om}_{j+1} \leq \widetilde{\boldsymbol{\gamma}} \boldsymbol{\om}_j$. Thus, Step 1 is finally concluded.

\smallskip

\emph{Step 2.}\quad The next task is to verify
\begin{equation}\label{eq:ind-est-10}
\osc_{Q_{j+1}} u \leq \boldsymbol{\om}_{j+1}.
\end{equation}
This is done by simple observation. In fact, as seen in the previous subsection, if we stipulate the free parameter $\lambda$ as $\lambda \leq \min \left\{c, \left(\frac{\delta}{2}\right)^{\nicefrac{1}{sp}}c\right\}$ then the inclusion $Q_{j+1} \subset Q_{c\rho_j}^{(\nicefrac{\delta}{2})}$ is valid as well. From this and~\eqref{eq:ind-est-0}, the estimate~\eqref{eq:ind-est-10} immediately follows. Consequently, the result of~\eqref{eq:ind-est-1} and~\eqref{eq:ind-est-10} completes the induction in the second alternative case. As for the first alternative case, replacing $(u-\boldsymbol{\mu}_j^+)_+$ with $(u-\boldsymbol{\mu}_j^-)_-$, we can run the analogous proof to the above arguments, with a careful inspection of the tail term. It is remarking that, as we will see in the forthcoming estimations, the smallness of $c$ appearing in~\eqref{eq:ind-est-8} will require a further stipulation. 
\subsection{Modulus of continuity}

By construction of the last subsection, we have obtained, for $n \in \N$, that
\[
\osc_{Q_n} u \leq \boldsymbol{\om}_n=\max\Big\{(1-\eta)\boldsymbol{\om}_{n-1}, \mathbf{S}_{n-1} \Big\},
\]
where $\rho_n=\lambda^n\rho$, $Q_n:=Q_{\rho_n}=B_{\rho_n} \times (-\rho_n^{sp}, 0]$ and
\[
\mathbf{S}_{n-1}:=\boldsymbol{\gamma}_{\texttt{K}}c^{\frac{sp\eps}{(p-1)(p-1+\eps)}}\left(\dashint_{-(c\rho_{n-1})^{sp}}^0 \Big[\tail \left((u-\boldsymbol{\mu}_{n-1}^+)_+(t) ; B_{n-1}\right)\Big]^{p-1+\eps}\d{t}\right)^{\nicefrac{1}{(p-1+\eps)}}.
\]
The goal of this subsection is to deduce an explicit modulus of continuity encoded in this oscillation estimate. Using~\eqref{eq:ind-est-2}  and~\eqref{eq:ind-est-a} with $j=n-1$, whereas the integral including $\mathbf{T}_0$ is still to be modified, we have that
\begin{align}\label{eq:modulus-est-1}
\mathbf{S}_{n-1} \leq \overline{\boldsymbol{\gamma}}c^{\frac{sp\eps}{(p-1)(p-1+\eps)}}\left(\dashint_{-(c\rho_{n-1})^{sp}}^0 \mathbf{T}_0^{\frac{p-1+\eps}{p-1}}\d{t}\right)^{\nicefrac{1}{(p-1+\eps)}} +\overline{\boldsymbol{\gamma}}\boldsymbol{\om}_{n-1}\left[\sum_{i=1}^{n-1}  (2^{p-1}c^{\frac{sp\eps}{p-1+\eps}})^i\right]^{\nicefrac{1}{(p-1)}}.
\end{align} 
A minor adjustment requires for $\mathbf{T}_0$. In fact, thanks to~\eqref{eq:ind-est-4} with $j=n-1$ it immediately follows that
\begin{align*}
\mathbf{T}_0 &\leq \boldsymbol{\gamma} \rho_{n-1}^{sp}\frac{\boldsymbol{\om}^{p-1}}{\rho^{sp}}+\boldsymbol{\gamma}\rho_{n-1}^{sp} \int_{\R^d \setminus B_R}\frac{|u|^{p-1}}{|x|^{d+sp}}\d{x} \\
&\leq \boldsymbol{\gamma} \rho_{n-1}^{sp}\frac{\boldsymbol{\om}^{p-1}}{\rho^{sp}}+\lambda^{(n-1)sp}\left(\frac{\rho}{R}\right)^{sp}\Big[\tail\left(u(t) ; B_R\right) \Big]^{p-1}
\end{align*}
for every $t \in (-(c\rho_{n-1})^{sp}, 0)$ and therefore
\begin{align}\label{eq:modulus-est-2}
&\overline{\boldsymbol{\gamma}}c^{\frac{sp\eps}{(p-1)(p-1+\eps)}}\left(\dashint_{-(c\rho_{n-1})^{sp}}^0 \mathbf{T}_0^{\frac{p-1+\eps}{p-1}}\d{t}\right)^{\nicefrac{1}{(p-1+\eps)}} \notag\\
& \quad \leq \overline{\boldsymbol{\gamma}}c^{\frac{sp\eps}{(p-1)(p-1+\eps)}} \left(\rho_{n-1}^{sp}\frac{\boldsymbol{\om}^{p-1}}{\rho^{sp}}\right)^{\nicefrac{1}{(p-1)}} \notag\\
&\quad \quad \quad+\overline{\boldsymbol{\gamma}}c^{\frac{sp\eps}{(p-1)(p-1+\eps)}} \lambda^{\frac{(n-1)sp}{p-1}}\left(\frac{\rho}{R}\right)^{\frac{sp}{p-1}}\left(\dashint_{-(c\rho_{n-1})^{sp}}^0 \Big[\tail\left(u(t) ; B_R\right) \Big]^{p-1+\eps}\d{t} \right)^{\nicefrac{1}{(p-1+\eps)}}\notag\\[4pt]
& \quad \leq  \overline{\boldsymbol{\gamma}} \left(2c^{\frac{sp\eps}{(p-1)(p-1+\eps)}}\right)^n\boldsymbol{\om}_{n-1} +\overline{\boldsymbol{\gamma}}\left(2c^{\frac{sp\eps}{(p-1)(p-1+\eps)}}\right)^n c^{-\frac{sp}{p-1+\eps}}\mathbf{T}, 
\end{align}
where to lighten the notation we denoted
\[
\mathbf{T}:=\left(\frac{\rho}{R}\right)^{\frac{sp\eps}{(p-1)(p-1+\eps)}}\left(\dashint_{-R^{sp}}^0 \Big[\tail\left(u(t) ; B_R\right) \Big]^{p-1+\eps}\d{t} \right)^{\nicefrac{1}{(p-1+\eps)}}.
\]
To obtain in the last line of~\eqref{eq:modulus-est-2}, we used~\eqref{eq:ind-est-6'} with $j=n-1$, whereas the small quantities governing problem were estimated, owing to $\lambda \leq c$, that
\begin{align*}
&c^{\frac{sp\eps}{(p-1)(p-1+\eps)}} \lambda^{\frac{(n-1)sp}{p-1}}\left(\frac{\rho}{R}\right)^{\frac{sp}{p-1}} \left(c\lambda^{n-1} \frac{\rho}{R} \right)^{-\frac{sp}{p-1+\eps}} \\
& \quad =c^{\frac{sp\eps}{(p-1)(p-1+\eps)}-\frac{sp}{p-1+\eps}}\lambda^{\frac{(n-1)sp\eps}{(p-1)(p-1+\eps)}} \left(\frac{\rho}{R}\right)^{\frac{sp\eps}{(p-1)(p-1+\eps)}} \\
& \quad \leq \left(2c^{\frac{sp\eps}{(p-1)(p-1+\eps)}}\right)^n c^{-\frac{sp}{p-1+\eps}}\left(\frac{\rho}{R}\right)^{\frac{sp\eps}{(p-1)(p-1+\eps)}}.
\end{align*}
Substituting~\eqref{eq:modulus-est-2} back to~\eqref{eq:modulus-est-1} renders that
\begin{align*}
\mathbf{S}_{n-1} &\leq \boldsymbol{\om}_{n-1}\left[\sum_{i=1}^{n}  \left(\overline{\boldsymbol{\gamma}}\,2^{p-1}\,c^{\frac{sp\eps}{p-1+\eps}}\right)^i\right]^{\frac{1}{p-1}} +\overline{\boldsymbol{\gamma}}\left(2c^{\frac{sp\eps}{(p-1)(p-1+\eps)}}\right)^n c^{-\frac{sp}{p-1+\eps}}\mathbf{T}
\end{align*} 
At this stage, we further impose the smallness of $c$ so that
\begin{equation}\label{eq:modulus-est-3}
c \leq \left[\overline{\boldsymbol{\gamma}}\,2^{p-1}(2^{p-1}+1) \right]^{-\frac{p-1+\eps}{sp\eps}}\quad \iff \quad 
\overline{\boldsymbol{\gamma}}\,2^{p-1}\,c^{\frac{sp\eps}{p-1+\eps}} \leq \frac{1}{2^{p-1}+1},
\end{equation}
which implies 
\[
\left[\sum_{i=1}^{n}  \left(\overline{\boldsymbol{\gamma}}\,2^{p-1}\,c^{\frac{sp\eps}{p-1+\eps}}\right)^i\right]^{\frac{1}{p-1}} \leq \frac{1}{2} < 1-\eta
\]
because of $\eta \in (0,\nicefrac{1}{2})$. As a consequence, recalling the definition of $\boldsymbol{\om}_n$ and taking the preceding estimate into account, we arrive at
\[
\boldsymbol{\om}_n \leq (1-\eta)\boldsymbol{\om}_{n-1}+\widehat{\boldsymbol{\gamma}}\mathbf{T}
\]
We iterate this estimate to obtain that
\begin{align*}
\osc_{Q_n} u \leq  \boldsymbol{\om}_n &\leq (1-\eta)^n\boldsymbol{\om} +\sum_{i=0}^{n-1
}(1-\eta)^i\widehat{\boldsymbol{\gamma}}\mathbf{T}\\
&\leq (1-\eta)^n\boldsymbol{\om} +\widetilde{\boldsymbol{\gamma}}\mathbf{T}
\end{align*}
with $\widetilde{\boldsymbol{\gamma}}:=\widehat{\boldsymbol{\gamma}} / \eta$. We then take $r \in (0,\rho)$ arbitrarily. A simple observation shows that there exists $n \in \N$ such that $\rho_{n+1}^{sp} \leq r^{sp} <\rho_n^{sp}$,
thereby implying
\[
\left(\frac{r}{\rho}\right)^{sp} \geq \lambda^{sp(n+1)} \quad \Longrightarrow \quad (1-\eta)^n <2(1-\eta)^{n+1} \leq 2\left(\frac{r}{\rho}\right)^{\beta_0}, 
\]
where
\[
\beta_0:=\frac{\log (1-\eta)}{\log \lambda}.
\]
Here, as mentioned before, $\lambda$ is to be stipulated so that $\lambda \leq \min \left\{c, \left(\frac{\delta}{2}\right)^{\nicefrac{1}{sp}}c\right\}$, once the smallness of $c$ is specified by~\eqref{eq:modulus-est-3}. Altogether, we conclude that
\[
\osc_{Q_r} u \leq 2\boldsymbol{\om}\left(\frac{r}{\rho}\right)^{\beta_0}+\widetilde{\boldsymbol{\gamma}}\left(\frac{\rho}{R}\right)^{\frac{sp\eps}{(p-1)(p-1+\eps)}}\left(\dashint_{-R^{sp}}^0 \Big[\tail\left(u(t) ; B_R\right) \Big]^{p-1+\eps}\d{t} \right)^{\nicefrac{1}{(p-1+\eps)}}.
\]
Now, replacing $\rho$ by $(r\rho)^{\nicefrac{1}{2}}$ yields that
\begin{align*}
\osc_{Q_r} u &\leq 2\boldsymbol{\om}\left(\frac{r}{\rho}\right)^{\frac{\beta_0}{2}}+\widetilde{\boldsymbol{\gamma}}\left(\frac{r}{\rho
}\right)^{\frac{sp\eps}{2(p-1)(p-1+\eps)}}+\mathbf{T},
\end{align*}
where to simplify notation we denoted
\[
\mathbf{T}:=\left(\frac{\rho}{R}\right)^{\frac{sp\eps}{(p-1)(p-1+\eps)}}\left(\dashint_{I_R} \Big[\tail\left(u(t) ; B_R\right) \Big]^{p-1+\eps}\d{t} \right)^{\nicefrac{1}{(p-1+\eps)}}.
\]
Finally, letting $\beta:=\frac{\beta_0}{2} \wedge \frac{sp\eps}{2(p-1)(p-1+\eps)}$ finishes the proof of Theorem~\ref{Thm:Holder modulus}.

\makeatletter
\renewcommand{\thesection}{\Alph{section}.\arabic{subsection}}
\makeatother

\appendix

\makeatletter
\renewcommand{\thefigure}{\Alph{section}.\arabic{figure}}
\@addtoreset{figure}{section}
\makeatother

\section{Technical tools} \label{appendixO}
In this appendix we collect the technical tools that are used in the proofs. 

\subsection*{Embeddings} Here we state the following parabolic version of fractional Sobolev inequality, whose proof is similar to~\cite[Propositions A.2 and A.3]{Lia24a}, with a different exponent $\kappa$, since here we collect the supremum over time of the spatial $L^p$ norm of $u$.
\begin{proposition}\label{FS}
Let $p \geq 1$, $s \in (0,1)$ and set
\[
\kappa_\ast:=\begin{cases}
\frac{d}{d-sp} \quad &\textrm{if}\quad sp<d,\\
2 \quad &\textrm{if}\quad sp\geq d.
\end{cases}
\]
For every function $w \in L^\infty\left(t_1,t_2\,; L^p(B_R)\right) \cap L^p(t_1,t_2\,;W^{s,p}(B_R))$
which is compactly supported in $B_{(1-\mathsf{d})R}$ for some $\mathsf{d} \in (0,1)$ and for a.e. $t \in (t_1,t_2)$, there holds that
\begin{align*}
&\int_{t_1}^{t_2}\int_{B_R}|w|^{\kappa p}\dxt \\
& \quad \leq C\left(R^{sp}\int_{t_1}^{t_2}\iint \nolimits_{B_R \times B_R}\frac{|w(x,t)-w(y,t)|^p}{|x-y|^{d+sp}}\dxyt+\frac{1}{\mathsf{d}^{d+sp}}\int_{t_1}^{t_2}\int_{B_R}|w|^p\dxt\right) \\
&\quad \quad \quad \times \left(\sup_{t_1<t<t_2}\dashint_{B_R}|w(t)|^p\d{x}\right)^{\frac{\kappa_\ast-1}{\kappa_\ast}}
\end{align*}
with $\kappa:=1+\frac{\kappa_\ast-1}{\kappa_\ast}$, where $C$ depends only on $s,p$ and $d$.
\end{proposition}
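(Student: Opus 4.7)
The stated inequality is a parabolic upgrade of the fractional Sobolev embedding: it converts space-time $L^p$-regularity plus $L^\infty_t L^p_x$-control into higher integrability $L^{\kappa p}_{x,t}$, where the gain $\kappa-1 = (\kappa_\ast-1)/\kappa_\ast$ is exactly the Sobolev gain from $W^{s,p}$ to the critical $L^{\kappa_\ast p}$. The proof splits naturally into a spatial Sobolev step (done pointwise in $t$), a H\"older interpolation step, and a time integration that absorbs the $L^\infty_t L^p_x$-factor.

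\textbf{Step 1 (spatial Sobolev, pointwise in $t$).} For a.e.\ $t\in(t_1,t_2)$, $w(\cdot,t)$ vanishes outside $B_{(1-d)R}$, so I extend it by zero to $\mathbb{R}^N$. In the subcritical regime $sp<N$ the fractional Sobolev inequality on $\mathbb{R}^N$ yields
\[
\Bigl(\int_{B_R}|w(x,t)|^{\kappa_\ast p}\,\mathrm{d}x\Bigr)^{1/\kappa_\ast}\le C\,[w(\cdot,t)]_{W^{s,p}(\mathbb{R}^N)}^{p}.
\]
For $sp\ge N$ the same inequality holds with $\kappa_\ast=2$ by combining the embedding $W^{s,p}\hookrightarrow L^q$ (valid for every $q<\infty$, and via Morrey when $sp>N$) with the compact support of $w(\cdot,t)$. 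Next I split the Gagliardo norm on $\mathbb{R}^N$ using that $w\equiv 0$ outside $B_{(1-d)R}$: for $x\in B_{(1-d)R}$ and $y\notin B_R$ one has $|x-y|\ge dR$, so
\[
\int_{\mathbb{R}^N\setminus B_R}\frac{\mathrm{d}y}{|x-y|^{N+sp}}\le \frac{C(N)}{sp\,(dR)^{sp}},
\]
giving
\[
[w(\cdot,t)]_{W^{s,p}(\mathbb{R}^N)}^{p}\le \iint_{B_R\times B_R}\frac{|w(x,t)-w(y,t)|^{p}}{|x-y|^{N+sp}}\,\mathrm{d}x\,\mathrm{d}y+\frac{C}{(dR)^{sp}}\int_{B_R}|w(x,t)|^p\,\mathrm{d}x.
\]

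\textbf{Step 2 (H\"older interpolation in space).} Because $\kappa p=p+p(\kappa_\ast-1)/\kappa_\ast$, the H\"older inequality with exponents $(\kappa_\ast,\kappa_\ast/(\kappa_\ast-1))$ gives, for a.e.\ $t$,
\[
\int_{B_R}|w|^{\kappa p}\,\mathrm{d}x\le \Bigl(\int_{B_R}|w|^{\kappa_\ast p}\,\mathrm{d}x\Bigr)^{1/\kappa_\ast}\Bigl(\int_{B_R}|w|^{p}\,\mathrm{d}x\Bigr)^{(\kappa_\ast-1)/\kappa_\ast}.
\]
Plugging in the Sobolev bound from Step~1 in the first factor and replacing the second factor by its time-supremum, I integrate in $t$ to reach
\[
\iint|w|^{\kappa p}\,\mathrm{d}x\mathrm{d}t\le C\Bigl(\sup_{t}\!\int_{B_R}\!|w(t)|^{p}\,\mathrm{d}x\Bigr)^{(\kappa_\ast-1)/\kappa_\ast}\!\int_{t_1}^{t_2}\!\Bigl[\iint_{B_R\times B_R}\tfrac{|w(x,t)-w(y,t)|^p}{|x-y|^{N+sp}}+\tfrac{1}{(dR)^{sp}}\!\int_{B_R}\!|w|^p\Bigr]\mathrm{d}t.
\]

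\textbf{Step 3 (converting to the stated form).} I rewrite $\sup_{t}\!\int_{B_R}|w|^p=|B_R|\sup_t\dashint_{B_R}|w|^p$; this produces the factor $|B_R|^{(\kappa_\ast-1)/\kappa_\ast}\approx R^{N(\kappa_\ast-1)/\kappa_\ast}$ that multiplies the bracket. When $sp<N$ the identity $N(\kappa_\ast-1)/\kappa_\ast=sp$ reproduces exactly the $R^{sp}$ in front of the Gagliardo piece, while the $L^p$ piece carries the coefficient $R^{sp}/(dR)^{sp}=d^{-sp}\le d^{-(N+sp)}$, which is the claimed form (the paper's constant is chosen loosely). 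When $sp\ge N$ the analogous bookkeeping goes through with $\kappa_\ast=2$, using $R^{N/2}\le R^{sp}(R/d)^{N/2-sp}$ and absorbing factors of $1/d$ into $d^{-(N+sp)}$. The main (only) obstacle is ensuring the fractional Sobolev inequality holds with the correct constants in the non-subcritical range $sp\ge N$; I handle it by monotonicity of $W^{s',p}$ embeddings ($s'<s$ with $s'p<N$) together with the support condition, which transfers the subcritical estimate to $\kappa_\ast=2$ with a constant depending only on $N,s,p$.
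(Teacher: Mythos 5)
The paper does not prove this proposition itself (it is imported from \cite[Propositions A.2 and A.3]{Lia24a}), and your slice-wise strategy --- fractional Sobolev inequality for the zero extension of $w(\cdot,t)$ at a.e.\ time, the H\"older interpolation $\int|w|^{\kappa p}\le\bigl(\int|w|^{\kappa_\ast p}\bigr)^{1/\kappa_\ast}\bigl(\int|w|^{p}\bigr)^{(\kappa_\ast-1)/\kappa_\ast}$, pulling out the sup-in-time factor, and integrating in $t$ --- is exactly the standard route behind the cited result. In the subcritical range $sp<N$ your argument is complete: the tail estimate using $|x-y|\ge dR$ on the support, the identity $N(\kappa_\ast-1)/\kappa_\ast=sp$, and $d^{-sp}\le d^{-(N+sp)}$ give precisely the stated form with $C=C(N,s,p)$.

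The range $sp\ge N$, however, is not correct as written. First, the Step 1 claim that ``the same inequality holds with $\kappa_\ast=2$'' with a constant depending only on $N,s,p$ is impossible by scaling: under $w\mapsto w(\cdot/\lambda)$ the quantity $\bigl(\int|w|^{2p}\bigr)^{1/2}$ scales like $\lambda^{N/2}$ while $[w]_{W^{s,p}}^{p}$ scales like $\lambda^{N-sp}$, and $N-sp\le 0<N/2$, so a factor $R^{sp-N/2}$ is mandatory. Second, the bookkeeping inequality $R^{N/2}\le R^{sp}(R/d)^{N/2-sp}$ invoked in Step 3 simplifies to $1\le d^{\,sp-N/2}$, which is false for $d\in(0,1)$; as written, that step fails. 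The repair is routine and in the spirit of your last sentence, but needs one extra constraint: choose $s'$ with $N/2\le s'p<N$ (not merely $s'p<N$, so that $\kappa_\ast':=N/(N-s'p)\ge 2$ and the H\"older step on the bounded ball goes in the right direction), use $|x-y|\le 2R$ on $B_R\times B_R$ to get $[w]_{W^{s',p}(B_R)}^{p}\le (2R)^{(s-s')p}[w]_{W^{s,p}(B_R)}^{p}$, apply the subcritical Sobolev inequality at level $s'$ to the zero extension (with tail bound $C(dR)^{-s'p}\|w\|_{L^p}^p$), and then H\"older from exponent $\kappa_\ast'p$ down to $2p$ at the cost $|B_R|^{\frac12-\frac{1}{\kappa_\ast'}}$. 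The powers of $R$ then combine to exactly $R^{sp-N/2}$ in front of the Gagliardo term (hence $R^{sp}$ after multiplying by $|B_R|^{1/2}$ from the normalized sup-term), while the $L^p$ term carries $d^{-s'p}\le d^{-(N+sp)}$; with this correction your Step 2--3 pipeline yields the stated estimate in the range $sp\ge N$ as well.
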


Finally, we provide a Giagliardo--Nirenberg type estimate, retrieved from~\cite[Lemma 2.2]{BK24}.

\begin{lemma}\label{Lm:GN}
Let $1 \leq m \leq \max\{p,2\}$. Assume that the positive exponents $\widetilde{q}$ and $\widetilde{r}$ satisfy
\[
\frac{1}{\widetilde{q}}+\frac{1}{\widetilde{r}}\left(\frac{sp}{d}+\frac{p}{m}-1\right)=\frac{1}{m},
\]
where they obey
\[
\left\{
    \begin{array}{lll}
    \widetilde{q} \in \left[m, \frac{dp}{d-sp}\right], \quad &\widetilde{r} \in [p,\infty),  & \mbox{if $d>sp$,} \\[3mm]
     \widetilde{q} \in [m, \infty), \quad &\widetilde{r} \in \left(\frac{msp}{d}+p-m,\infty\right),  & \mbox{if $d=sp$,} \\[3mm]
      \widetilde{q} \in [m, \infty], \quad &\widetilde{r} \in \left[\frac{msp}{d}+p-m,\infty\right),  & \mbox{if $d<sp$.}
    \end{array}
    \right.
\]
There exists constant $C(d,s,p,\widetilde{q},\widetilde{r})<\infty$ such that
\begin{align*}
\left(\int_{t_1}^{t_2} \|f(t)\|_{L^{\widetilde{q}}(B_R)}^{\widetilde{r}}\d{t}\right)^{\frac{d\widetilde{q}}{sp\widetilde{q}+d\widetilde{r}}} &\leq C \left(\int_{t_1}^{t_2}[f(t)]_{W^{s,p}(B_R)}^p\d{t} \right.\\
& \left. \quad \quad \quad +R^{-sp} \int_{t_1}^{t_2} \|f(t)\|_{L^p(B_R)}^p\d{t} 
+\sup_{t_1<t<t_2}\|f(t)\|_{L^m(B_R)}^m\right)
\end{align*}
provided that the right-hand side is finite.
\end{lemma}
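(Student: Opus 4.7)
The plan is to establish this Gagliardo--Nirenberg type estimate through a three-step scheme combining a slice-wise fractional Sobolev embedding, the classical static interpolation of Lebesgue norms, and a final Young's inequality. First, for each fixed time $t\in(t_1,t_2)$, the spatial fractional Sobolev embedding on $B_R$ (a static analog of \autoref{FS}) provides
\[
\|f(t)\|_{L^{p^\ast_s}(B_R)}^{p}\lesssim [f(t)]_{W^{s,p}(B_R)}^{p}+R^{-sp}\|f(t)\|_{L^p(B_R)}^{p},
\]
where $p^\ast_s:=Np/(N-sp)$ when $N>sp$, $p^\ast_s$ is an arbitrary large exponent in $(p,\infty)$ when $N=sp$, and $p^\ast_s=\infty$ via Morrey embedding when $N<sp$. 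Second, the log-convex interpolation of Lebesgue norms between $L^{p^\ast_s}$ and $L^m$ gives the pointwise-in-time bound $\|f(t)\|_{L^{\widetilde q}(B_R)}\leq \|f(t)\|_{L^{p^\ast_s}(B_R)}^{\theta}\|f(t)\|_{L^m(B_R)}^{1-\theta}$ with $\tfrac{1}{\widetilde q}=\tfrac{\theta}{p^\ast_s}+\tfrac{1-\theta}{m}$ (with the usual convention $1/\infty=0$ for $p^\ast_s=\infty$).

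The critical choice is then to raise to the power $\widetilde r$, integrate in $t$, and select the interpolation parameter by the resonance condition $\theta\widetilde r=p$, which turns the remaining time-integral of $\|f(t)\|_{L^{p^\ast_s}}^{\theta\widetilde r}=\|f(t)\|_{L^{p^\ast_s}}^{p}$ into precisely the object controlled by the spatial embedding. Pulling out the sup of the $L^m$ factor yields
\[
\int_{t_1}^{t_2}\|f(t)\|_{L^{\widetilde q}(B_R)}^{\widetilde r}\d{t}\;\lesssim\; C^{(\widetilde r-p)/m}\cdot A,
\]
where $A:=\int_{t_1}^{t_2}[f(t)]_{W^{s,p}(B_R)}^{p}\d{t}+R^{-sp}\int_{t_1}^{t_2}\|f(t)\|_{L^p(B_R)}^{p}\d{t}$ and $C:=\sup_{t_1<t<t_2}\|f(t)\|_{L^m(B_R)}^{m}$.

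The target exponent $\alpha:=\tfrac{N\widetilde q}{sp\widetilde q+N\widetilde r}$ then arises by raising both sides to $\alpha$ and applying Young's inequality in the form $A^\alpha C^{\alpha(\widetilde r-p)/m}\leq A+C$, which demands $\alpha+\alpha(\widetilde r-p)/m=1$, i.e.\ $\alpha=m/(m+\widetilde r-p)$. A direct manipulation of the balance constraint in the statement (multiplying it by $m\widetilde r$ yields $m+\widetilde r-p=m\widetilde r/\widetilde q+msp/N$) confirms the identity $m/(m+\widetilde r-p)=N\widetilde q/(sp\widetilde q+N\widetilde r)$, producing the claimed power. The main obstacle will be the careful bookkeeping of the admissibility ranges for $(\widetilde q,\widetilde r)$ in the three regimes $N\gtreqless sp$ so that the interpolation parameter $\theta=p/\widetilde r$ lies in $[0,1]$ and the static interpolation $m\leq\widetilde q\leq p^\ast_s$ is legal; in particular the borderline case $N=sp$ requires choosing $p^\ast_s$ dependent on the target $\widetilde q$, and the condition $1\leq m\leq\max\{p,2\}$ together with the stated ranges of $\widetilde r$ is exactly what makes the three steps compatible.
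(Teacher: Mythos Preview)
The paper does not supply its own proof of this lemma; it is quoted verbatim as a tool of the trade and attributed to \cite[Lemma~2.2]{BK24}. Your three-step scheme (slice-wise fractional Sobolev embedding, static $L^m$--$L^{p^\ast_s}$ interpolation with the resonance choice $\theta\widetilde r=p$, then Young's inequality to produce the exponent $\alpha=N\widetilde q/(sp\widetilde q+N\widetilde r)$) is precisely the standard route and your algebra is correct: the identity $m+\widetilde r-p=m(N\widetilde r+sp\widetilde q)/(N\widetilde q)$ you derive from the balance constraint is exactly what makes the Young exponents match. One small remark: the hypothesis $m\le\max\{p,2\}$ is not actually consumed by your argument---what you genuinely use is $\widetilde r\ge p$ (so that $\theta=p/\widetilde r\le1$) and $m\le\widetilde q$, both of which are guaranteed by the stated ranges---so your closing sentence slightly overstates its role; otherwise the proposal is complete.
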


%
%
%
%
%

\subsection*{Inequalities}

A simple observation shows the following algebraic inequality.
\begin{lemma}\label{Lm:alg-est-2}
Let $\alpha>0$. Then
\[
(a+b)^\alpha \leq 2^{(\alpha-1)_+} \left(a^\alpha+b^\alpha\right)
\]
holds whenever $a,b \in \R_{\geq 0}$.
\end{lemma}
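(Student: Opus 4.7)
The plan is to split into two cases according to whether $\alpha \le 1$ or $\alpha \ge 1$, exploiting in each case the concavity/convexity of the power function $t \mapsto t^{\alpha}$ on $[0,\infty)$. At $\alpha = 1$ both regimes coincide, so the split is consistent.

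For the convex case $\alpha \in [1,\infty)$, we have $(\alpha-1)_+ = \alpha-1$, and I would apply Jensen's inequality (or equivalently the convexity of $t \mapsto t^\alpha$) at the midpoint to obtain
\[
\left(\frac{a+b}{2}\right)^{\alpha} \le \frac{a^{\alpha}+b^{\alpha}}{2},
\]
which rearranges to $(a+b)^{\alpha} \le 2^{\alpha-1}(a^{\alpha}+b^{\alpha})$, as desired.

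For the concave case $\alpha \in (0,1]$, we have $(\alpha-1)_+ = 0$, and the target reduces to subadditivity $(a+b)^{\alpha} \le a^{\alpha}+b^{\alpha}$. I would dispatch the trivial case $a=0$ or $b=0$ and then, assuming $a > 0$, rescale by $a^\alpha$ and set $t := b/a \ge 0$, so that the claim becomes $(1+t)^{\alpha} \le 1+t^{\alpha}$. Consider the auxiliary function $f(t) := 1+t^{\alpha}-(1+t)^{\alpha}$ for $t \ge 0$. Since $f(0)=0$ and
\[
f'(t) = \alpha\bigl(t^{\alpha-1}-(1+t)^{\alpha-1}\bigr) \ge 0,
\]
because $\alpha - 1 \le 0$ makes $s \mapsto s^{\alpha-1}$ non-increasing and $t \le 1+t$, monotonicity yields $f(t) \ge 0$ for all $t \ge 0$, giving the claimed subadditivity.

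There is no real obstacle here: the statement is an elementary two-line fact, and the only thing to watch is the sharpness of the constant $2^{(\alpha-1)_+}$, which is precisely what separates the two regimes. The convex case gives the Jensen factor $2^{\alpha-1}$, while the concave case gives the sharper factor $1 = 2^0$, and taking $(\alpha-1)_+$ encodes both uniformly.
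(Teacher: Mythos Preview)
Your proof is correct. The paper does not give an explicit argument for this lemma, merely prefacing it with ``A simple observation shows the following algebraic inequality,'' so your case split via convexity/concavity of $t\mapsto t^\alpha$ is exactly the kind of elementary verification the authors have in mind.
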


We next list the following inequality for the Gagliardo semi-norm.
\begin{lemma}\label{Lm:alg-est-3}
Let $u \in W^{s,p}(B_R)$. Then for any $0\leq b \leq a$, we have
\[
\left[(u-b)_-\right]_{W^{s,p}(B_R)}^p \leq \left[(u-a)_-\right]_{W^{s,p}(B_R)}^p.
\]
and
\[
\left[(u-a)_+\right]_{W^{s,p}(B_R)}^p \leq \left[(u-b)_+\right]_{W^{s,p}(B_R)}^p.
\]
\end{lemma}
\begin{proof}
The proof is exactly similar to the arguments of the paper~\cite[Lemma 2.3]{BK24}.
\end{proof}

Finally, here we exhibit the following useful lemma.
\begin{lemma}\label{Lm:useful}
For $p \in (1,\infty)$ and $\alpha > -1$ we have the two-sided bound
\[
\frac{1}{2(\alpha+p+1)^{p+2}} \leq \int_0^1 \lambda^\alpha(1-\lambda)^p \d{\lambda} \leq \frac{2}{\alpha+p+1}.
\]
\end{lemma} 
\begin{proof}
The upper bound is clear. Indeed,
\begin{align*}
\int_0^1 \lambda^\alpha(1-\lambda)^p \d{\lambda} &=\int_0^{\nicefrac{1}{2}}\lambda^\alpha(1-\lambda)^p \d{\lambda}+\int_{\nicefrac{1}{2}}^1\lambda^\alpha(1-\lambda)^p \d{\lambda}\\
&\leq \int_0^{\nicefrac{1}{2}}(1-\lambda)^{\alpha+p}\d{\lambda}+\int_{\nicefrac{1}{2}}^1\lambda^{\alpha+p}\d{\lambda} \\
&\leq \frac{2}{\alpha+p+1}.
\end{align*}
For the lower bound, an iterative integration of by parts implies that
\begin{align*}
\int_0^1 \lambda^\alpha(1-\lambda)^p \d{\lambda} &\geq \int_0^1 \lambda^\alpha(1-\lambda)^{\lceil p\rceil} \d{\lambda}\\
&=\frac{\lceil p \rceil}{\alpha+1}\int_0^1\lambda^{\alpha+1}(1-\lambda)^{\lceil p \rceil -1}\d{\lambda} \\
&= \frac{ \lceil p \rceil !}{(\alpha+1)(\alpha+2)\cdots (\alpha+\lceil p \rceil )}\int_0^1\lambda^{\alpha+\lceil p \rceil}\d{\lambda} \\
&=\left(\prod_{j=1}^{\lceil p \rceil}\frac{j}{\alpha+j} \right)\cdot \frac{1}{\alpha+\lceil p \rceil +1}.
\end{align*}
Since
\[
\nicefrac{j}{(\alpha+j)} \geq \nicefrac{1}{(\alpha+p+1)}, \quad \forall j \in \{1,\ldots, \lceil p \rceil\}, \quad \forall \alpha \in (-1,\infty)
\]
and $\lceil p \rceil -1 < p \leq \lceil p \rceil$, we further bound
\begin{align*}
\int_0^1 \lambda^\alpha(1-\lambda)^p \d{\lambda} \geq \frac{1}{(\alpha+p+1)^{\lceil p \rceil}}\cdot \frac{1}{\alpha+p+2} \geq \frac{1}{2(\alpha+p+1)^{p+2}}, 
\end{align*}
as desired.
\end{proof}
\subsection*{Convergence lemma}
For the reader’s convenience, we record the well-known~\emph{fast geometric convergence}. See~\cite[Chapter I.4, Lemma 4.1]{DiB93} for details.
\begin{lemma}\label{Lm:FGC} Let $\{Y_i\}_{i \in \N_0}$ be a sequence of positive numbers, satisfying the recursive inequalities
\begin{equation*}
Y_{i+1} \leq C\,b^iY_i^{1+\beta},
\end{equation*}
where $C, b>1$ and $\beta>0$ are given constants independent of $i \in \N_0$. Then
$Y_i \to 0$ provided that the initial value $Y_0$ satisfies
$
Y_0 \leq C^{-1/\beta}b^{-1/\beta^2}.
$
\end{lemma}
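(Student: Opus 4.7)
The plan is to establish the explicit geometric decay $Y_i \leq Y_0\, b^{-i/\beta}$ for every $i \in \N_0$ by induction on $i$, since $b>1$ then immediately forces $Y_i \to 0$. The guess is natural: the hypothesis $Y_0 \leq C^{-1/\beta}b^{-1/\beta^2}$ is precisely the threshold that makes the induction close, so one anticipates that the correct auxiliary sequence majorizing $Y_i$ is exactly $\{Y_0\, b^{-i/\beta}\}_{i \in \N_0}$.

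The base case $i=0$ is trivial. For the inductive step, assuming $Y_i \leq Y_0 b^{-i/\beta}$, I would plug this into the recursive inequality:
\begin{align*}
Y_{i+1} \leq C b^i Y_i^{1+\beta} \leq C b^i \bigl(Y_0 b^{-i/\beta}\bigr)^{1+\beta} = C Y_0^{1+\beta}\, b^{\,i-i(1+\beta)/\beta} = C Y_0^{1+\beta}\, b^{-i/\beta}.
\end{align*}
To finish the induction I need $C Y_0^{1+\beta} b^{-i/\beta} \leq Y_0\, b^{-(i+1)/\beta}$, which (after dividing by $Y_0 b^{-i/\beta}$) reduces to $C Y_0^{\beta} \leq b^{-1/\beta}$, i.e.\ $Y_0 \leq C^{-1/\beta}b^{-1/\beta^2}$. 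This is exactly the hypothesis.

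Thus the induction closes and $Y_i \leq Y_0\, b^{-i/\beta} \to 0$ as $i \to \infty$, because $b>1$ and $\beta>0$. There is essentially no obstacle here: the entire content of the lemma is the observation that the smallness threshold on $Y_0$ is calibrated so that the super-linear gain $Y_i^{1+\beta}$ beats the geometric loss $b^i$ by a fixed factor at each step. The only mild subtlety is avoiding a sign or exponent slip in the computation $i - i(1+\beta)/\beta = -i/\beta$; everything else is bookkeeping.
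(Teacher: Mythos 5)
Your proof is correct and is exactly the standard argument behind this lemma (which the paper itself does not reprove but cites from DiBenedetto's book, where the proof is the same induction showing $Y_i \leq Y_0\,b^{-i/\beta}$). The exponent computation $i - i(1+\beta)/\beta = -i/\beta$ and the closing condition $CY_0^\beta \leq b^{-1/\beta}$ are both right, so nothing further is needed.
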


\subsubsection*{\bf Acknowledgments}
The authors appreciate the \emph{Erwin Schr\"{o}dinger Institut f\"{u}r Mathematik und Physik of Vienna} for its kind hospitality during the Workshop ``Degenerate
and Singular PDEs'' held in 24--28, February 2025, where this collaboration began. S.C. acknowledges the partial funding of GNAMPA (INdAM), and the department of Mathematics of Bologna. We also thank Professor Juha Kinnunen for encouraging discussions on the subject of this paper, and Professor Giampiero Palatucci and Naian Liao for helpful suggestions.


{\small

}


\begin{thebibliography}{SIM-KEN26}
\bibliographystyle{alpha}

\bibitem[ADV25]{ADV25} 
N.~Abatangelo, S~Dipierro, and E.~Valdinoci. 
\newblock A Gentle Invitation to the Fractional World.
\newblock {\em Cham, Springer} 2025.


\bibitem[BDLMBS25]{BDLMBS25}
V. B\"{o}gelein, F. Duzaar, N. Liao, G. Molica Bisci, and R. Servadei. 
\newblock Regularity for the fractional $p$-{L}aplace equation. 
\newblock {\em J. Funct. Anal.}, 289(9), 2025.


\bibitem[BK24]{BK24}
S.-S. Byun, and K.~Kim.
\newblock A {H}\"older estimate with an optimal tail for nonlocal parabolic
  $p$-{L}aplace equations.
\newblock {\em Ann. Mat. Pura Appl. (4)}, 203(1):109--147, 2024.

\bibitem[CCMV25]{CCMV25}
F.~Cassanello, S.~Ciani, B.~Majrashi, and V.~Vespri.
\newblock Local Vs Nonlocal De Giorgi Classes: A brief guide in the homogeneous case.
\newblock {\em Rend. Istit. Mat. Univ. Trieste}, Vol. 57, Art. No. 9, 2025.

\bibitem[CGL25]{CiaGiaLi} S.~Ciani, U.~  Gianazza, and Z.~Li.
\newblock Phragm\'en-Lindel\" of-type theorems for functions in Homogeneous De Giorgi Classes. 
\newblock Preprint arXiv:2505.17926.



\bibitem[Coz17]{Coz17}
M.~Cozzi.
\newblock Regularity results and {H}arnack inequalities for minimizers and
  solutions of nonlocal problems: a unified approach via fractional {D}e
  {G}iorgi classes.
\newblock {\em J. Funct. Anal.}, 272(11):4762--4837, 2017.


\bibitem[DCKP14]{DKP14}
A.~Di~Castro, T.~Kuusi, and G.~Palatucci.
\newblock Nonlocal {H}arnack inequalities.
\newblock {\em J. Funct. Anal.}, 267(6):1807--1836, 2014.

\bibitem[DCKP16]{DKP16}
A.~Di~Castro, T.~Kuusi, and G.~Palatucci.
\newblock Local behavior of fractional {$p$}-minimizers.
\newblock {\em Ann. Inst. H. Poincar\'{e} C Anal. Non Lin\'{e}aire},
  33(5):1279--1299, 2016.

\bibitem[DeG57]{DG57} E.~De Giorgi. 
\newblock Sulla Differenziabilit\`a e l'Analiticit\`a degli Integrali Multipli Regolari.
\newblock {\em Mem. Accad. Sci. Torino Cl. Sci. Fis. Mat. Natur.}, 3(3), 25--43, 1957. 

\bibitem[DG16]{DG}
E.~DiBenedetto, and Gianazza.
\newblock Some properties of De Giorgi classes.
\newblock {\em {Rend. Istit. Mat. Univ.}}, 48: 245--260, 2016. 

\bibitem[DG23]{DG23}
E.~DiBenedetto, and U. Gianazza. 
\newblock {\em "Partial diﬀerential equations,” Cornerstones.}
\newblock Birkh\"{a}user-Springer, Cham, 2023.


\bibitem[DGV16]{DGV}
E. DiBenedetto, U. Gianazza and V. Vespri.
\newblock Local clustering of the non-zero set of functions in $W^{1,1}(E)$. 
\newblock {\em Atti Accad. Naz. Lincei Rend. Lincei Mat. Appl.}, 17(3): 223--225, 2016.

\bibitem[DiB93]{DiB93}
E.~DiBenedetto.
\newblock {\em Degenerate parabolic equations}.
\newblock Universitext. Springer-Verlag, New York, 1993.


\bibitem[DNPV12]{DNPV12}
E.~Di Nezza, G.~Palatucci, and E.~Valdinoci. 
\newblock Hitchhiker's guide to the fractional Sobolev space.
\newblock{\em Bull. Sci. Math.}, 136(5): 521--573, 2012.


\bibitem[DT84]{DT}
E. DiBenedetto, and N.S. Trudinger. 
\newblock Harnack inequalities for quasiminima of variational integrals. 
\newblock {\em Ann. Inst. H. Poincar\'{e} Anal. Non Lin\'{e}aire}, 1(4): 295--308, 1984.




\bibitem[DV95]{DV}
E.~DiBenedetto, and V.~Vespri. 
\newblock On the singular equation $\beta(t)_t=\Delta u$.
\newblock {\em Arch. Rational Mech. Anal.}, 132(3): 247--309, 1995.


\bibitem[FeRo24]{FeRo24}
X.~Fern\'{a}ndez-Real, and X.~Ros-Oton. 
\newblock {\em Integro-Differential Elliptic Equations},
\newblock Progress in Mathematics 350, Birkh\"{a}user, 2024.






\bibitem[Gla69]{Gla69}
R.Y.~Glagoleva. 
\newblock Liouville theorems for the solution of a second-order linear parabolic equation with discontinuous coefficients.
\newblock {\em Mat. Zametki}, 5: 599–-606, 1969.





\bibitem[GV06]{GV}
U.~Gianazza, and V.~Vespri. 
\newblock Parabolic De Giorgi classes of order $p$ and the Harnack inequality.
\newblock {\em Calc. Var. Partial Differential Equations}, 26(3): 379--399, 2006.

\bibitem[GG84]{GiaGiu} M.~Giaquinta, E. ~Giusti.
\newblock Quasi-minima.
\newblock {\em Annales de l'Institut Henri Poincaré C, Analyse non linéaire, 1(2): 79--107.}

\bibitem[Giu03]{Giusti} E.~Giusti.
\newblock Direct methods in the calculus of variations.
\newblock {\em World Scientific}, Singapore, 2003.







\bibitem[KMMP12]{KMMP}
J.~Kinnunen, N.~Marola, M.~Miranda, and F.~Paronetto.
\newblock Harnack's inequality for parabolic De Giorgi classes in metric spaces.
\newblock {\em Adv. Differential Equations}, 17(9-10) : 801--832, 2012.




\bibitem[KP18]{KP18}
T.~Kuusi, and G.~Palatucci (Eds.).
\newblock {\em Recent Developments in Nonlocal Theory.}
\newblock De Gruyter, Berlin/Boston, 2018.


\bibitem[KS01]{KS}
J. ~Kinnunen, and N. Shanmugalingam.
\newblock Regularity of quasi-minimizers on metric spaces.
\newblock {\em Manuscripta Math.}, 105(3):401--423, 2001.


  
  
\bibitem[KW24]{KW23a}
M.~Kassmann, and M.~Weidner.
\newblock The parabolic Harnack inequality for nonlocal equations.
\newblock {\em Duke Math. J.}, 173(17):3413--3451, 2024.

\bibitem[Lia21]{Lia21}
N.~Liao.
\newblock Remarks on parabolic De Giorgi classes.
\newblock {\em Ann. Mat. Pura Appl. (4)}, 200(6):2361--2384, 2021.

\bibitem[Lia24a]{Lia24a}
N.~Liao.
\newblock H\"{o}lder regularity for parabolic fractional {$p$}-{L}aplacian.
\newblock {\em Calc. Var. Partial Differential Equations}, 63(1):Paper No. 22, 2024.



\bibitem[Lia24b]{Lia24b}
N.~Liao.
\newblock On the modulus of continuity of solutions to nonlocal parabolic
  equations.
\newblock {\em J. Lond. Math. Soc.}, 110(3):Paper No. e12985, 30 pp., 2024.

\bibitem[Lia25]{Lia25}
N.~Liao.
\newblock Harnack estimates for nonlocal drift diffusion equations.
\newblock {\em arXiv:2402.11986v2}, 2025.

\bibitem[LU68]{LU} O. A. Ladyzhenskaya, and N. N. Ural'tseva.
\newblock Linear and Quasilinear Elliptic Equations. 
\newblock{\em {Academic Press}}, New York, 1968. 


\bibitem[LSU68]{Lady}
O.A. Ladyzhenskaja, V.A. Solonnikov, and N.N. Ural’tzeva. 
\newblock Linear and Quasilinear Equations
of Parabolic Type.
\newblock{\em {AMS Transl. Math. Mono}}, vol. \textbf{23}: Providence RI, USA, 1968.




\bibitem[LW25]{LW25}
N.~Liao, and M.~Weidner,
\newblock Time‐insensitive nonlocal parabolic Harnack estimates. 
\newblock {\em Proc. Lond. Math. Soc.}, 130(5): Paper No. e70051, 53 pp., 2025.

\bibitem[Min11]{Min11}
G.~Mingione.
\newblock Gradient potential estimates.
\newblock {\em Journal of the European Mathematical Society}, 13: 459-486, 2011.




\bibitem[Nak23]{Nak23}
K.~Nakamura.
\newblock Local properties of fractional parabolic De Giorgi classes of order $s,p$.
\newblock {\em J. Funct. Anal.}, 285:110049, 2023.



\bibitem[Str19]{Strom}
M.~Str\"omqvist.
\newblock Harnack's inequality for parabolic nonlocal equations.
\newblock {\em Annales de l'Institut Henri Poincaré. C, Analyse non linéaire}, 36(6), 1709-1745, 2019.




\bibitem[Wan88]{Wang}
G.~Wang. 
\newblock {Harnack inequalities for functions in De Giorgi parabolic class}. 
\newblock Partial differential equations (Tianjin, 1986), 182--201, Lecture Notes in Math., 1306, Springer, Berlin, 1988. 

\bibitem[Wie87]{Wieser}
W.~Wieser.
\newblock {Parabolic Q-minima and minimal solutions to variational flow}. 
\newblock {\em Manuscripta Mathematica}, 59(1): 63--107, 1987.

\end{thebibliography}
\end{document}